\DeclareMathAlphabet{\mathup}{OT1}{\familydefault}{m}{n}
\let\epsilon\varepsilon
\let\phi\varphi
\let\hat\widehat
\let\tilde\widetilde
\let\bar\overline
\newcommand{\relmiddle}[1]{\mathrel{}\middle#1\mathrel{}}
\newcommand{\ZZ}{\mathbb{Z}}
\newcommand{\CC}{\mathbb{C}}
\newcommand{\AAA}{\mc{A}}
\newcommand{\abs}[1]{\lvert #1 \rvert}
\newcommand{\mc}[1]{\mathcal{#1}}
\newcommand{\MM}{\mc{M}}
\newcommand{\MMM}{\mathfrak{M}}
\newcommand{\switt}{\mathfrak{switt}}
\newcommand{\OO}{\mc{O}}
\def\co{\colon\thinspace}
\def\coeq{\colonequals}
\DeclareMathOperator{\Ad}{Ad}
\DeclareMathOperator{\div1}{div}
\DeclareMathOperator{\D}{D}
\DeclareMathOperator{\vdim}{vdim}
\DeclareMathOperator{\SWitt}{SWitt}
\newcommand{\pp}[1]{\frac{\partial}{\partial{#1}}}
\DeclareMathOperator{\Spec}{Spec}
\DeclareMathOperator{\Spf}{Spf}
\DeclareMathOperator{\Aut}{Aut}
\newcommand{\Gr}{{\mathup{Gr}}}
\DeclareMathOperator{\id}{id}
\DeclareMathOperator{\Hom}{Hom}
\DeclareMathOperator{\Immm}{Im}
\renewcommand{\Im}{\Immm}
\DeclareMathOperator{\Pic}{Pic}
\DeclareMathOperator{\sdim}{sdim}
\DeclareMathOperator{\Ber}{Ber}
\DeclareMathOperator{\str}{str}
\newcommand{\NS}{{\mathup{NS}}}
\newcommand{\red}{\mathup{red}}
\definecolor{mygreen}{RGB}{80,150,10}
\newcommand{\old}[1]{}
\DeclareRobustCommand*{\tuburl}{\hyper@normalise\tuburl@}
\def\tuburl@#1
\hyper@linkurl\expandafter{\expandafter\Hurl\expandafter{\l_tmpa_str}}{https://\l_tmpa_str}
\crefname{equation}{}{}
\definecolor{lightgray}{rgb}{0.9,0.9,0.9}
\let\tilde\widetilde
\newtheorem{theorem}{Theorem}[section]
\newtheorem{cor}[theorem]{Corollary}
\newtheorem{prop}[theorem]{Proposition}
\newtheorem{lemma}[theorem]{Lemma}
\theoremstyle{definition}
\newtheorem{definition}[theorem]{Definition}
\theoremstyle{remark}
\newtheorem{rem}[theorem]{Remark}
\title{The Neveu-Schwarz group and Schwarz's extended super Mumford form}
\author[K. A. Maxwell]{Katherine A. Maxwell}
\email[K. A. Maxwell]{katherine.maxwell@ipmu.jp}
\address{Kavli IPMU (WPI), UTIAS,
    The University of Tokyo,
    Kashiwa, Chiba 277-8583, Japan
}
\author[A. A. Voronov]{Alexander A. Voronov}
\email[A. A. Voronov]{voronov@umn.edu}
\address{School of Mathematics,
  University of Minnesota,
  Minneapolis, MN 55455,
  and\newline
 Kavli IPMU (WPI), UTIAS,
 The University of Tokyo,
 Kashiwa, Chiba 277-8583, Japan
 }
\thanks{Research is supported in part by World Premier International Research Center Initiative (WPI Initiative), MEXT, Japan. The first author is grateful to Max Planck Institute for Mathematics in Bonn for its hospitality and financial support.}
\begin{document}
\begin{abstract}
    In 1987, Albert Schwarz suggested a formula which extends the super Mumford form from the moduli space of super Riemann surfaces into the super Sato Grassmannian. His formula is a remarkably simple combination of super tau functions. We compute the Neveu-Schwarz action on super tau functions, and show that Schwarz's extended Mumford form is invariant under the the super Heisenberg-Neveu-Schwarz action, which strengthens Schwarz's proposal that a locus within the Grassmannian can serve as a universal moduli space with applications to superstring theory. Along the way, we construct the Neveu-Schwarz, super Witt, and super Heisenberg formal groups.
\end{abstract}
\maketitle

\tableofcontents

\section*{Introduction}

The super Mumford form $μ$ is the trivializing section given by the image of $1$ under the canonical isomorphism
\begin{align*}
    \OO_{\mathfrak{M}_g} &= \lambda_{3/2} \otimes \lambda_{1/2}^{-5}, &
    1 &\mapsto μ,
\end{align*}
known as the super Mumford isomorphism, which identifies the structure sheaf with a product of Berezinian line bundles over the moduli space $\mathfrak{M}_g$ of $N=1$ super Riemann surfaces (SRSs) of genus $g$. This super Mumford isomorphism is a generalization of the Mumford isomorphism over the moduli space of Riemann surfaces proved by \textcite{Mumford.1977.sopv} using the Grothendieck-Riemann-Roch theorem. The proof of the super Mumford isomorphism \cite{Deligne.1988., Voronov.1988.afftMmist} provided not only an explicit formula for the super Mumford form, but also showed that the super Mumford isomorphism is canonical, a property lacking for the classical isomorphism.

Prompted by the result of \textcite{Belavin.Knizhnik.1986.cgattoqs} that the Polyakov measure in string theory has a simple explicit relation to the classical Mumford form, algebro-geometric or holomorphic methods became of great interest to computations in (super)string theory. Applying algebro-geometric methods to integrals over the moduli space of SRSs, seen as the ultimate goal, puts the super Mumford form in a place of particular importance. The precise details of the relationship between the superstring measure and the super Mumford form can be found in \cite{Witten.2019.nosRsatm}.

After a relatively dormant period of some 15 years, D'Hoker and Phong made a breakthrough computation of the amplitudes for the supermoduli space $\MMM_2$ of genus-two super Riemann surfaces \cite{DHoker.Phong.2002.tls,DHoker.Phong.2008.lotls,Witten.2015.nohsastmolg} as well as proposed a computation-friendly expression for the super Mumford form on $\MMM_3$ \cite{DHoker.Phong.2005.amfatsm}, which resulted in partial computation of the amplitudes \cite{Gomez.Mafra.2013.tcs3laaSd}. D'Hoker-Phong's computations were based on ``splitting'' the supermoduli space $\MMM_2$ into the underlying moduli space $\MM_2$ and vector-bundle data on it, and then identifying the moduli space $\MM_2$ of Riemann surfaces with the moduli space $\AAA_2$ of principally polarized abelian varieties of dimension 2. Eyeing possible extension of D'Hoker and Phong's results to higher genera, Donagi and Witten \cite{Donagi.Witten.2015.ssinp} showed that the supermoduli space cannot be split. Moreover, since for higher $g$ the moduli space $\MM_g$ is described as a subspace of $\AAA_g$ via complicated equations (see Shiota's solution \cite{Shiota.1986.coJvitose} of the Schottky problem and also Farkas-Grushevsky-Salvati Manni \cite{Farkas.Grushevsky.SalvatiManni.2021.aesttwSp}), a direct generalization of D'Hoker-Phong's computations to higher genera seems to be out of reach, at least for the time being. There is a modular-form ansatz \cite{Grushevsky.2009.ssaihg} for the odd component of the super Mumford form in arbitrary genus, based on a certain splitting assumption for the super Mumford form and subject to verification of physical constraints, such as the vanishing of the cosmological constant.

On the other hand, the moduli space of SRSs admits an embedding into the super Sato Grassmannian. In the seminal paper \cite{Manin.1986.cdotstatdsotmsosc}, Manin conjectured that the moduli space of curves is an orbit of the Virasoro algebra action on the classical Grassmannian, and that a similar statement holds in the super case. The proof of this conjecture in the super case can be found in \textcite{Maxwell.2022.tsMfaSG}. The morphism which embeds supermoduli space in the Grassmannian is known as the super Krichever map \cite{Mulase.Rabin.1991.sKf}.

The embedding of moduli space via the Krichever map lead to proposals that the Sato Grassmannian, or some locus within, is a universal moduli space containing curves of every genus, and similarly in the super case. See for example \cite{Morozov.1987.statsoums,Schwarz.1998.Gast}. Besides the promise that the Sato Grassmannian contains the moduli space in a “universal” way, the relatively simple, although infinite, coordinates of the Grassmannian may provide an accessible setting to perform computations. This idea is supported by fact that Shiota's solution (based on Mulase's work \cite{Mulase.1984.csiseaJv}) to the Schottky problem relies on characterizing the moduli space locus in the Sato Grassmannian via the KP flow. For the super Schottky problem, Mulase described an analogous solution \cite{Mulase.1991.ansKsaacotJoaasc} using the super KP flow.

This paper focuses on the formula suggested by Albert Schwarz \cite{Schwarz.1987.tfsaaums, Schwarz.1989.fsaums} defined using the super KP flow, which is a formula for an extended Mumford form over a certain locus within the super Sato Grassmannian. This locus, referred to as Schwarz's locus, may serve as a universal moduli space as it contains the image of the super moduli space of SRSs under the Krichever map. Schwarz's extended Mumford form is defined as
\begin{align*}
    M(L)\coeq \frac{{\tau}_{L}(g^3)}{{\tau}^3_{L}(g)}, &&  \text{for any $g$ such that }g L = L^\perp,
\end{align*}
where $\tau_L(g)$ is a super tau function, defined for a point $L$ in the Grassmannian and an element $g$ of the super Heisenberg group, which generates the super KP flow. Some properties of these super tau functions were studied in \cite{Dolgikh.Schwarz.1990.sGstfas}.
It is remarkable that the formula is so simple, and also that, as Schwarz noted, a similar formula in the bosonic (classical) case would not make sense, as it would be divergent.

The goal of this paper is to show the invariance of Schwarz's formula under the Neveu-Schwarz (NS) action on the Grassmannian, \cref{NS mumford invariance}.
By restricting to the supermoduli space orbit, this gives a short verification of Schwarz's claim that his formula agrees with the super Mumford form on supermoduli space up to a constant factor. We also give a different proof of the agreement, which removes the constant factor from the claim, \Cref{sMform}. We also present an explicit construction \cref{sMumfordIso} of the super Mumford isomorphism \cite{Voronov.1988.afftMmist} which uses a rational section of the relative dualizing sheaf and thereby does not require the existence of a holomorphic section.

 In order to consider the NS action in relation to the super KP flows, we must consider the NS group. However, it is known that the complex NS group does not exist for the same reason that the complex Virasoro group does not exist. Instead, we define the NS, super Witt, super Heisenberg \emph{formal} groups, inspired by their bosonic counterparts \cite{AlvarezVazquez.MunozPorras.PlazaMartin.1998.tafoseoabf, MunozPorras.PlazaMartin.2001.agok}, and these supergroups suffice to act on the super Sato Grassmannian and describe Schwarz's super tau function. These formal supergroups are of interest in their own right, perhaps in relation to super diffeomorphism groups or super universal Teichmüller space.

\subsection*{Conventions}
We work over the ground field $\CC$ of complex numbers: all super vector spaces, superschemes, etc.\ are assumed to be over $\CC$. By default, we assume our \emph{locally free sheaves} are of constant finite rank and also interchangeably call them \emph{vector bundles}. \emph{Invertible sheaves} or \emph{line bundles} are locally free sheaves of rank $1|0$ or $0|1$. We call them \emph{even} or \emph{odd line bundles}, respectively. We systematically write $=$ for canonical isomorphisms and $\mc{L}^{-1}$ for the dual $\mc{L}^*$ of a line bundle $\mc{L}$.

\section{The superspace of formal Laurent series}

Recall from \cite{Maxwell.2022.tsMfaSG} (see also \Cref{sGrassmannian} below) that the super Sato Grassmannian $\Gr_{j/2}$ parametrizes discrete supersubspaces of
\begin{equation*}
H_{j/2}\linebreak[1] \coeq \linebreak[0] \mathbb{C}(\!(z)\!)[ζ]\;[dz|dζ]^{\otimes j},
\end{equation*}
the space of $j/2$-differentials on the \emph{formal deleted superdisk} $\Spf \mathbb{C}(\!(z)\!)[ζ]$.
This definition differs from the approach of Schwarz, which is to identify $j/2$-differentials with functions by omitting the $[dz|dζ]^{\otimes j}$ factor, as this gives an isomorphism
\begin{equation*}
H_{j/2} \cong Π^j H_{0/2}.
\end{equation*}
The parity change operator $Π$ is due to the convention that $[dz|dζ]$ is odd.
These spaces admit natural multiplications
\begin{align}
\label{H multiplication}
    H_{i/2} \otimes H_{j/2} & \to H_{(i+j)/2},\\\nonumber
    g(z|ζ)\;[dz|dζ]^{\otimes i} \otimes f(z|ζ)\;[dz|dζ]^{\otimes j}& \mapsto \pm g(z|ζ)\cdot f(z|ζ)\; [dz|dζ]^{\otimes (i+j)},
\end{align}
where the sign is given by the Koszul rule, which makes
\begin{equation*}
H_\bullet \coeq
\bigoplus_{j \in \ZZ} H_{j/2}
\end{equation*}
a $\tfrac{1}{2}\ZZ$-graded supercommutative $\CC$-algebra with $[dz|dζ]$ having parity 1 and degree $1/2$. The commutation relation involves only parity.

The space $H_{j/2}$ has a ``semi-infinite polarization,'' that is to say, decomposes into the direct sum
\begin{equation*}
H_{j/2} = H^-_{j/2} \oplus H^+_{j/2},
\end{equation*}
where 
\begin{align*}
H^-_{j/2} & \coeq z^{-1} \CC[z^{-1} | ζ]\;[dz|dζ]^{\otimes j},\\
H^+_{j/2} & \coeq \CC[\![z]\!][ζ]\;[dz|dζ]^{\otimes j}.
\end{align*}
The space $H^+_{j/2}$ will play the role of a \emph{distinguished
compact subspace} of $H_{j/2}$, whereas $H^-_{j/2}$ will serve as a
\emph{distinguished discrete subspace}, making a basepoint of
$\Gr_{j/2}$. However, the super Sato Grassmannian breaks into more
connected components, which will be labeled by virtual dimension,
whence it will be useful to introduce a supply of standard compact and
discrete subspaces for $m,n\in \ZZ$, as follows.
\begin{align}\label{indexed H plus minus}
\left(H^{m|n}_{j/2}\right)^- & \coeq \begin{cases}
    z^{m-1} \CC[z^{-1}]\;[dz|dζ]^{\otimes j} \oplus  z^{n-1} ζ\CC[z^{-1}]\;[dz|dζ]^{\otimes j} & j \text{ even}\\
    z^{m-1} ζ\CC[z^{-1}]\;[dz|dζ]^{\otimes j} \oplus  z^{n-1} \CC[z^{-1}]\;[dz|dζ]^{\otimes j} & j \text{ odd}\\
\end{cases},\\
\left(H^{m|n}_{j/2}\right)^+ & \coeq \begin{cases}
    z^m\CC[\![z]\!]\;[dz|dζ]^{\otimes j} \oplus z^nζ\CC[\![z]\!]\;[dz|dζ]^{\otimes j} & j \text{ even}\\
    z^m ζ \CC[\![z]\!]\;[dz|dζ]^{\otimes j} \oplus z^n\CC[\![z]\!]\;[dz|dζ]^{\otimes j} & j \text{ odd}\\
\end{cases}.\nonumber
\end{align}
Note that the distinguished subspaces correspond to the $0|0$ index as
\begin{align*}
\left(H^{0|0}_{j/2}\right)^- &= H^-_{j/2},\\
\left(H^{0|0}_{j/2}\right)^+ & = H^+_{j/2}.
\end{align*}

\section{The super Heisenberg group}

We would also like to speak about Lie supergroup actions, for which the language of functors of points will be convenient, if not necessary.

There will be two formal supergroups under consideration. Let us start with the simpler one, the super Heisenberg group $Γ_{\bullet}$, which is roughly a certain  supergroup of ``invertible'' elements of $H_\bullet$. A more precise definition via the functor of points may be given by generalizing the definition of the classical formal Heisenberg group from \cite[Section 4]{AlvarezVazquez.MunozPorras.PlazaMartin.1998.tafoseoabf}.
For motivational purposes, let us recall the following result from that paper:
\begin{lemma}[{\cite[Corollary 4.7]{AlvarezVazquez.MunozPorras.PlazaMartin.1998.tafoseoabf}}]
\label{nilpotent-red}
Let $R$ be a $\CC$-algebra and $f(z) = \sum_n a_n z^n \in R(\!(z)\!)$.
Then $f(z)$ is (multiplicatively) invertible if and only if there exists $N \in \ZZ$ such that
$a_n \in \sqrt{R}$ for all $n < N$ and $a_N$ is invertible.
\end{lemma}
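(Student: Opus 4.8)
My plan is to prove the two implications separately. Sufficiency---that the stated condition forces invertibility---holds over an arbitrary commutative $\CC$-algebra, while necessity will use that $\Spec R$ is connected (in the intended applications $R$ is local, so this is automatic). Connectedness is genuinely needed: over $R=\CC\times\CC$ the series $f=(z,1)$ is invertible yet no coefficient is a unit and $\sqrt R=0$, so necessity fails without it.

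For sufficiency, assume $a_n\in\sqrt R$ for $n<N$ and $a_N\in R^{\times}$. Factoring out the unit $a_Nz^N$, write $a_N^{-1}z^{-N}f=1+v+w$, where $v=\sum_{n>N}a_N^{-1}a_nz^{n-N}\in R[\![z]\!]$ has zero constant term and $w=\sum_{n<N}a_N^{-1}a_nz^{n-N}$ is a polynomial in $z^{-1}$ with only finitely many terms (as $f$ has finitely many negative-degree terms), all of whose coefficients are nilpotent. Then $1+v$ is a unit of $R[\![z]\!]$, its inverse being the $z$-adically convergent series $\sum_{k\ge 0}(-v)^k$. The finitely many nilpotent coefficients of $w$ generate a nilpotent ideal, so $w^s=0$ for some $s$; hence $(1+v)^{-1}w$ is nilpotent and $1+(1+v)^{-1}w$ is a unit. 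Therefore $1+v+w=(1+v)\bigl(1+(1+v)^{-1}w\bigr)$ is a unit, and so is $f$.

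For necessity, set $g=f^{-1}$ and, for each prime $\mathfrak p\subset R$, reduce along $R(\!(z)\!)\to k(\mathfrak p)(\!(z)\!)$ with $k(\mathfrak p)=\mathrm{Frac}(R/\mathfrak p)$. The image of $f$ is invertible, hence nonzero, so $N_{\mathfrak p}=\min\{n\mid a_n\notin\mathfrak p\}$ is finite; since $f$ and $g$ have only finitely many negative-degree terms, $N_{\mathfrak p}$ is bounded above and below independently of $\mathfrak p$. For any $h=\sum_n c_nz^n\in R(\!(z)\!)$ the locus $\{\mathfrak p\mid \mathrm{ord}_{\mathfrak p}h\le k\}=\bigcup_{n\le k}D(c_n)$ is open. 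Closedness of $\{\mathfrak p\mid N_{\mathfrak p}\ge j\}=V(\{a_n\mid n<j\})$ is clear; for openness I apply this to $h=g$, whose reduction has order $-N_{\mathfrak p}$, giving $\{\mathfrak p\mid N_{\mathfrak p}\ge j\}=\{\mathfrak p\mid \mathrm{ord}_{\mathfrak p}g\le -j\}$. Thus each $\{\mathfrak p\mid N_{\mathfrak p}\ge j\}$ is clopen, and connectedness of $\Spec R$ forces $N_{\mathfrak p}$ to equal a single integer $N$.

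It then remains only to read off the statement: for $n<N$ we have $a_n\in\mathfrak p$ for every prime $\mathfrak p$, so $a_n\in\bigcap_{\mathfrak p}\mathfrak p=\sqrt R$; and $a_N\notin\mathfrak p$ for every $\mathfrak p$, so $a_N$ lies in no prime ideal and is a unit. I expect the constancy of the order $N_{\mathfrak p}$ to be the main obstacle: it is exactly what breaks down for disconnected spectra, and the efficient way past it is to observe that closedness of $\{N_{\mathfrak p}\ge j\}$ is automatic whereas openness has to be extracted from the inverse $g$.
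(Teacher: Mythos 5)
The paper never proves this lemma: it is quoted verbatim from \cite[Corollary 4.7]{AlvarezVazquez.MunozPorras.PlazaMartin.1998.tafoseoabf} ``for motivational purposes,'' and the super Heisenberg group is then \emph{defined} by writing the coefficient condition \eqref{nilpotent} directly into the functor rather than by invoking the lemma. So there is no in-paper proof to compare yours against; your argument is a self-contained replacement, and it is correct. The sufficiency half (factor out the unit $a_N z^N$, invert $1+v$ $z$-adically in $R[\![z]\!]$, and absorb the finitely many nilpotent negative-degree coefficients into a nilpotent perturbation) is the standard argument and, as you say, needs no hypothesis on $R$. The necessity half is also correct as you set it up: reducing at each prime $\mathfrak{p}$, the order $N_{\mathfrak{p}}$ of $\bar f$ in $k(\mathfrak{p})(\!(z)\!)$ satisfies $\operatorname{ord}_{\mathfrak{p}}(g)=-N_{\mathfrak{p}}$ because $\operatorname{ord}_{\mathfrak{p}}$ is a valuation and $\bar f\bar g=1$; this bounds $N_{\mathfrak{p}}$, makes $\{\mathfrak{p} \mid N_{\mathfrak{p}}\ge j\}=V(\{a_n\}_{n<j})$ closed and, rewritten as $\bigcup_{n\le -j}D(c_n)$ in the coefficients $c_n$ of $g$, open; clopenness plus connectedness forces $N_{\mathfrak{p}}$ constant, whence $a_n\in\bigcap_{\mathfrak{p}}\mathfrak{p}=\sqrt{R}$ for $n<N$ and $a_N$, lying in no prime, is a unit.

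The most valuable part of your write-up is the observation that the statement, read literally for an arbitrary commutative $\CC$-algebra, is \emph{false} in the ``only if'' direction: over $R=\CC\times\CC$ one has $R(\!(z)\!)\cong\CC(\!(z)\!)\times\CC(\!(z)\!)$, so $f=(z,1)$ is invertible while $\sqrt{R}=0$ and no coefficient of $f$ is a unit. Connectedness of $\Spec R$ (equivalently, absence of nontrivial idempotents; automatic when $R$ is local) is exactly the missing hypothesis, and your proof isolates precisely where it enters. This imprecision in the quoted statement is harmless downstream: the paper's Definition \ref{Gamma} imposes condition \eqref{nilpotent} by fiat, and the representability argument in Proposition \ref{rep-Gamma} matches points of $\Spf A$ against that explicit condition, never against ``invertibility'' alone. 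Two cosmetic points only: connectedness should be taken to include $R\ne 0$, and the identity $\operatorname{ord}_{\mathfrak{p}}(g)=-N_{\mathfrak{p}}$ deserves the one-line justification above, since it is what converts data about $f$ into the openness you need.
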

Here $\sqrt{R}$ is the nilradical, the set of nilpotent  elements of $R$. A similar statement works in the super case, as we make it clear in the following definition.

\begin{definition}[super Heisenberg group $Γ_{\bullet}$]
\label{Gamma}
Define a functor
\begin{equation*}
Γ_{\bullet}\co \operatorname{\mathbf{SSch}}_\CC  \to \operatorname{\mathbf{Group}}
\end{equation*}
of even invertible elements of $H_\bullet$. More explicitly, for each $j, N \in \ZZ$, 
given a $\CC$-superscheme $S$ with $R = Γ(S, \mathcal{O}_S)$, define the set of $S$-points of $Γ_{j/2}^N$ as
\begin{align}
\nonumber
    Γ_{j/2}^N (S) &\coeq \Big\{ f (z| ζ) [dz|dζ]^{\otimes j} \in H_{j/2} (S), \text{ invertible in } H_\bullet(S) \Big\}
    \\
    &\coeq \begin{multlined}[t][0.8\textwidth]
    \Big\{ \sum_{n \ge -M} (a_n + \alpha_n ζ) z^n [dz|dζ]^{\otimes j} \in (H_{j/2}\mathrel{\hat{\otimes}} R)_{\bar{j}}\mathrel{\Big|}\\
\text{for some } M \in \ZZ, a_N \in R_0^\times, a_n \in \sqrt{R_0} \text{ for $n < N$} \Big\},
    \end{multlined}
     \label{nilpotent}
\end{align}
where $R_0^\times$ is the set of invertible even elements of $R$. The condition of total parity $\bar{j} \in \ZZ/2\ZZ$ of these formal Laurent series is equivalent to the condition $a_n \in R_0$ and $\alpha_n \in R_1$. The group structure on the disjoint union $Γ_{\bullet} (S) \coeq \coprod_j \coprod_N Γ_{j/2}^N (S)$ is given by multiplication of power series. As a group $Γ_\bullet (S)$ is obviously isomorphic to the product $Γ_{0/2}^0 (S) \times \ZZ^2$.
\end{definition}
\begin{prop}
\label{rep-Gamma}
The functor
\begin{align*}
Γ_\bullet \co \operatorname{\mathbf{SSch}}_\CC & \to \operatorname{\mathbf{Group}},\\
S & \mapsto Γ_{\bullet} (S),
\end{align*}
is representable by a formal group superscheme, also denoted by $Γ_{\bullet}$. This supergroup is abelian, as it represents a functor to the subcategory of abelian groups in $\operatorname{\mathbf{Group}}$.
\end{prop}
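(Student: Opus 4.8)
The plan is to prove representability by exhibiting $\Gamma_\bullet$ as a formal supergroup scheme built out of explicit affine (formal) pieces, mirroring the classical argument of \textcite{AlvarezVazquez.MunozPorras.PlazaMartin.1998.tafoseoabf} but keeping careful track of the odd coordinates. Since $\Gamma_\bullet(S) \cong \Gamma_{0/2}^0(S) \times \ZZ^2$ as groups and the $\ZZ^2$ factor records the discrete data $(j,N)$, it suffices to represent the identity component $\Gamma_{0/2}^0$ and then take a formal coproduct indexed by $\ZZ^2$. So the core of the argument is to realize the functor
\begin{equation*}
S \mapsto \Gamma_{0/2}^0(S) = \Bigl\{\, 1 + \sum_{n \ge -M}(a_n + \alpha_n ζ)z^n \;\Big|\; a_0 \in R_0^\times,\ a_n \in \sqrt{R_0}\ (n<0) \,\Bigr\}
\end{equation*}
as a formal scheme, where I normalize $N=0$ and factor out $a_0$.

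First I would introduce coordinate functions: the even coefficients $a_n$ and the odd coefficients $\alpha_n$ for $n \in \ZZ$ are the natural candidates for affine coordinates on the representing object. The essential structural observation, exactly as in \Cref{nilpotent-red}, is that invertibility forces the coefficients $a_n$ for $n<N$ to be nilpotent; this nilpotency is what makes the object \emph{formal} rather than an honest scheme, because nilpotence is detected only by finitely many coordinates at a time. Concretely, I would write $\Gamma_{0/2}^0$ as a filtered colimit (in the category of formal schemes) of the subfunctors cut out by imposing $a_n = 0$ for $n < -M$ and $(a_n)^{k} = 0$ for $-M \le n < 0$, as $M$ and the nilpotency order $k$ grow. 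Each such truncation is represented by a spectrum of an explicit $\CC$-superalgebra, a polynomial algebra in the even variables $a_n$ ($n\ge 0$, with $a_0$ inverted), the truncated even variables (modulo the nilpotency relations), and the odd variables $\alpha_n$, completed appropriately in the positive directions. Taking the colimit/formal spectrum over these truncations produces the formal group superscheme representing $\Gamma_{0/2}^0$.

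Next I would supply the Hopf-superalgebra structure making this a group object: the comultiplication is dual to multiplication of Laurent series, so the coproduct of the coordinate $a_n$ (resp.\ $\alpha_n$) is the convolution expression obtained by expanding the product of two such series and collecting the coefficient of $z^n$ (even part) and of $z^nζ$ (odd part), with the Koszul signs from \eqref{H multiplication} inserted on the odd terms. The counit sends a series to $1$ and the antipode is dual to multiplicative inversion, which exists precisely because of the invertibility condition built into the functor. I would check that these structure maps are well defined on each truncation and compatible with the colimit transition maps, so they descend to the formal scheme. Abelianness is immediate: multiplication of Laurent series in the supercommutative algebra $H_\bullet$ is commutative, so the functor lands in abelian groups and the coproduct is cocommutative.

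The main obstacle I anticipate is bookkeeping in the odd directions rather than any conceptual difficulty. Unlike the purely even case, the product of two series contains cross terms $\alpha_n ζ \cdot \alpha_m ζ = 0$ (since $ζ^2=0$) and mixed terms $a_n \cdot \alpha_m ζ$, so the comultiplication formula for the odd coordinates is bilinear in even and odd coordinates while the even comultiplication is unaffected by the odd ones at second order; verifying coassociativity and the antipode axioms therefore requires tracking Koszul signs consistently and confirming that the nilpotency conditions on the even coefficients are preserved under comultiplication (a product of two nilpotents being nilpotent, with controlled order). The secondary technical point is to phrase the colimit rigorously in the category of formal superschemes so that representability is genuine and not merely a pro-representability statement; I would handle this by identifying the representing object with $\Spf$ of the completed Hopf superalgebra and noting, as in the cited classical construction, that this is exactly the formal-scheme analogue of the condition that only finitely many negative coefficients are nonzero and those are nilpotent.
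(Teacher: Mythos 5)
Your proposal follows essentially the same route as the paper: the same splitting $\Gamma_\bullet \cong \Gamma^0_{0/2} \times \ZZ^2$, the same coordinates $a_n, \alpha_n$, and the same mechanism by which formality arises, namely a filtration by ``support bounded below by $-M$, negative even coefficients nilpotent of bounded order.'' Your colimit of truncations is exactly the ind-scheme picture of the paper's $\Spf$ of the completed algebra $A = \CC[x_0, x_0^{-1}, x_1, \dots \mid \xi_0, \xi_1, \dots]\{\!\{x_{-1}, x_{-2}, \dots \mid \xi_{-1}, \xi_{-2}, \dots\}\!\}$, and your per-variable nilpotency conditions $(a_n)^k = 0$ give a filtration cofinal with the paper's ideal powers $(x_{-1},\dots,x_{-M})^p$, so the two constructions agree. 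One economy you are missing: once the Set-valued functor is shown representable, the group structure (and its abelianness and all Hopf axioms) transfers to the representing object automatically by Yoneda, since the functor is group-valued; this is how the paper concludes, and it makes your explicit verification of coassociativity, antipode axioms, and Koszul-sign bookkeeping unnecessary.

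One correction, because as literally written the step would fail: there must be \emph{no} completion ``in the positive directions.'' In each truncation, the variables $a_n$ ($n \ge 0$, with $a_0$ inverted) and $\alpha_n$ ($n \ge 0$) must remain honest polynomial generators; the only formal/topological direction is the negative one. If you completed along the ideal $(a_1, a_2, \dots)$, then a continuous homomorphism into the discrete ring $R = \Gamma(S,\OO_S)$ would have to annihilate a power of that ideal, forcing the non-negative coefficients of every $S$-point to be nilpotent --- but the functor $\Gamma^0_{0/2}$ imposes no condition whatsoever on coefficients in non-negative degrees. This is precisely why the paper's defining ideals $I_{M,p}$ involve only the negative-index variables. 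With that phrase deleted (and the small inconsistency fixed in your displayed formula, which both writes elements as $1 + \sum(\cdot)$ and imposes $a_0 \in R_0^\times$), your argument matches the paper's.
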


\begin{proof}
The argument of \cite[Theorem 4.10]{AlvarezVazquez.MunozPorras.PlazaMartin.1998.tafoseoabf} can be easily generalized to the super case. We will show the representability of the functor $Γ_{0/2}^0$ and identify $Γ_\bullet$ as $Γ_{0/2}^0 \times \ZZ^2$.

We start with constructing a formal superscheme and will then prove that it represents the functor.

Consider the algebra $\CC[\dots, x_{-1}, x_0, x_1, \dots | \dots, \xi_{-1}, \xi_0, \xi_1, \dots]$ of polynomials in infinitely many even ($x_i$) and odd ($\xi_i$) variables. For each $M,p  \ge 1$, let
\begin{equation}
\label{IMp}
I_{M,p}\coeq (x_{-1}, \dots, x_{-M})^p + (x_{-M-1}, x_{-M-2}, \dots \, | \, \xi_{-M-1}, \xi_{-M-2}, \dots)
\end{equation}
be the ideal generated by the variables $x_{-M-1}, \linebreak[0] x_{-M-2}, \dots$ and $\xi_{-M-1}, \xi_{-M-2}, \dots$ and the $p$th power of the ideal generated by the variables $x_{-1}, \dots, x_{-M}$. Introduce a linear topology on $\CC[\dots, x_{-1}, x_0, x_1, \dots | \dots, \xi_{-1}, \xi_0, \xi_1, \dots]$ by taking the ideal $I_{M,p}$ as a base of the open neighborhoods of 0. Consider the completion of the polynomial algebra in this topology. This version of the algebra of formal power series in infinitely many variables may also be given by the limit
\begin{multline}
\label{series}
\CC [x_0, x_1, \dots | \; \xi_0, \xi_1, \dots ] \{\!\{ x_{-1}, x_{-2}, \dots | \; \xi_{-1},  \xi_{-2},  \dots\}\!\} \\
\begin{aligned}
& \coeq \varprojlim_{M,p} \CC[\dots, x_{-1}, x_0, x_1, \dots | \dots, \xi_{-1}, \xi_0, \xi_1, \dots] /I_{M,p}\\
 & \cong \varprojlim_{p} \CC[x_{-p},\dots, x_{-1}, x_0, x_1, \dots |\; \xi_{-p}, \dots, \xi_{-1}, \xi_0, \xi_1, \dots] / (x_{-1}, \dots, x_{-p})^p.
\end{aligned}
\end{multline}
The last property implies that $\CC [x_0, x_1, \dots | \; \xi_0, \xi_1, \dots ] \{\!\{ x_{-1}, x_{-2}, \dots | \; \xi_{-1},  \xi_{-2},  \dots\}\!\}$ is an admissible topological ring. The same is true about its localization at $x_0$:
\begin{equation*}
A \coeq \CC [x_0, x_0^{-1}, x_1, x_2, \dots | \; \xi_0, \xi_1, \dots ] \{\!\{ x_{-1}, x_{-2}, \dots | \; \xi_{-1},  \xi_{-2},  \dots\}\!\}
\end{equation*}
Therefore, the formal spectrum $\Spf A$ makes sense. We claim that this is the formal superscheme which represents the functor $Γ_{0/2}^0$. The formal group superscheme structure on $\Spf A$ automatically comes out of the group structure of the functor and is given by multiplication of formal power series.

To prove the claim, observe that, for a complex superscheme $S$ with the algebra of regular functions $R = Γ(S, \OO_S)$ considered with discrete topology, we have
\begin{equation*}
\Hom_{\operatorname{\mathbf{fSSch}}} (S, \Spf A) = \Hom_{\operatorname{\mathbf{top-\CC-sAlg}}} (A, R),
\end{equation*}
where $\operatorname{\mathbf{fSSch}}$ is the category of formal superschemes and $\operatorname{\mathbf{top-\CC-sAlg}}$ is the category of topological $\CC$-superalgebras.
A continuous algebra homomorphism $f\co A \to R$ is equivalent to a collection of even elements $\dots, a_{-1} \coeq f(x_{-1}), a_0 \coeq f(x_0), a_1 \coeq f(x_1), \dots$ and odd elements $\dots, \alpha_{-1} \coeq f(\xi_{-1}), \alpha_{0} \coeq f(\xi_0), \alpha_{1} \coeq f(\xi_1), \dots$ of $R$ such that $a_0 \in R^\times$ and $f(I_{M,p}) = 0$ for some $M$ and $p$. The latter is equivalent to the condition that $a_n = \alpha_n = 0$ for all $n < -M$ and that the ideal $(a_{-1}, \dots, a_{-M})$ is nilpotent, $(a_{-1}, \dots, a_{-M})^p = 0$, which implies that $a_{-1}, \dots, a_{-M}$ are nilpotent. This happens if and only if the series $\sum (a_n + \alpha_n ζ) z^n$ satisfies the conditions \eqref{nilpotent} defining an element of $Γ_{j/2}^N (S)$ with $j = N = 0$.
\end{proof}

We would like to briefly describe the Lie algebra of the super Heisenberg group.

\begin{definition}\label{def of h}
    The \emph{super Heisenberg Lie algebra} $\mathfrak{h}$ is the abelian Lie superalgebra with vector space $H_{0/2}$ with bracket given by the commutator of the natural multiplication \cref{H multiplication}.
\end{definition}

\begin{prop}\label{super heisenberg Lie algebra}
    The super Heisenberg algebra $\mathfrak{h}$ is the Lie algebra of the formal group superscheme $Γ^0_{0/2}$.
\end{prop}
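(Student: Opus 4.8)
The plan is to compute the Lie superalgebra of $\Gamma^0_{0/2}$ as the tangent space at the identity element, realized through the functor of points evaluated on dual numbers, and then to match it with $\mathfrak{h}$. Since the group law on $\Gamma^0_{0/2}$ is multiplication of power series, its identity element is the constant series $1$. For the \emph{even} part of the tangent space I would set $R = \CC[\epsilon]$ with $\epsilon$ even and $\epsilon^2 = 0$, and take
\[
\operatorname{Lie}(\Gamma^0_{0/2})_{\bar 0} = \ker\bigl(\Gamma^0_{0/2}(\CC[\epsilon]) \to \Gamma^0_{0/2}(\CC)\bigr),
\]
the kernel of the reduction $\epsilon \mapsto 0$; for the \emph{odd} part I would instead use $R = \CC[\eta]$ with $\eta$ odd. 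The key feature to exploit is that the conditions \eqref{nilpotent} cutting out $\Gamma^0_{0/2}$ become vacuous to first order, so these kernels are as large as possible.

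Evaluating on $\CC[\epsilon]$, an element of $\Gamma^0_{0/2}(\CC[\epsilon])$ has all $\alpha_n = 0$ (as $R_1 = 0$), and reducing to $1$ forces it into the form $1 + \epsilon\, g(z)$ with $g(z) = \sum_{n \ge -M} b_n z^n \in \CC(\!(z)\!)$; the invertibility of $a_0 = 1 + \epsilon b_0$ and the nilpotency of the lower coefficients $\epsilon b_n$ hold automatically because $\epsilon$ is nilpotent, so $g$ ranges over all of $(H_{0/2})_{\bar 0}$. Likewise, over $\CC[\eta]$ the kernel consists of the elements $1 + \eta\, \zeta h(z)$ with $\zeta h(z) \in (H_{0/2})_{\bar 1}$. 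Multiplying two elements of the first kind gives $(1 + \epsilon g)(1 + \epsilon g') = 1 + \epsilon(g + g')$, so the group law induces the additive structure, and the even and odd pieces assemble into an isomorphism of super vector spaces $\operatorname{Lie}(\Gamma^0_{0/2}) \xrightarrow{\sim} H_{0/2}$ sending $1 + \epsilon g \mapsto g$ and $1 + \eta\,\zeta h \mapsto \zeta h$.

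It remains to match the brackets. By Proposition~\ref{rep-Gamma} the supergroup $\Gamma^0_{0/2}$ is abelian, so every group supercommutator is trivial; computing $[X,Y]$ from $g_X g_Y g_X^{-1} g_Y^{-1}$ with two independent dual parameters therefore yields $[X,Y] = 0$ on $\operatorname{Lie}(\Gamma^0_{0/2})$. On the other side, the bracket of $\mathfrak{h}$ in Definition~\ref{def of h} is the supercommutator of a supercommutative multiplication, hence also identically zero. The two brackets agree, and the linear isomorphism above upgrades to an isomorphism of Lie superalgebras. The only delicate points I anticipate are bookkeeping ones: fixing the super-sign conventions in the odd directions so that the $\ZZ/2\ZZ$-grading and the super-commutator come out correctly, and confirming — as above — that the nilpotency and finite-principal-part conditions of \eqref{nilpotent} impose nothing beyond membership in $H_{0/2}$ once we pass to first order. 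Because both Lie superalgebras are abelian, the real content is the linear identification of the tangent space, and the bracket comparison is essentially automatic.
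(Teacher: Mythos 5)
Your proof is correct, and it takes a genuinely more explicit route than the paper's. The paper argues in one conceptual step: it identifies $Γ^0_{0/2}$ with the group of invertible functions on the formal superdisk and then invokes the standard principle that the Lie algebra of the unit group of a (super)commutative algebra is that algebra with the commutator bracket, which here is $H_{0/2}$ with zero bracket, i.e.\ $\mathfrak{h}$. Your dual-number computation is in effect the proof of that general principle specialized to this case: you check by hand that the first-order neighborhoods of $1$ in the even and odd directions sweep out exactly $(H_{0/2})_{\bar 0}$ and $(H_{0/2})_{\bar 1}$, and that abelianness on both sides (the group by \cref{rep-Gamma}, the algebra by \cref{def of h}) makes the bracket comparison automatic. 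Your route buys two things. First, it verifies concretely that the conditions \eqref{nilpotent} — invertibility of $a_0$ and nilpotency of the coefficients of negative powers of $z$ — impose nothing at first order, so the tangent space really is all of $H_{0/2}$, Laurent tail included; this is precisely the point hidden in the paper's ``clearly'' (note that the paper's proof speaks of functions on $\Spf \CC[\![z]\!][ζ]$, whose global functions are only $\CC[\![z]\!][ζ]$; the negative powers of $z$ enter exactly through the nilpotent, formal directions that your computation makes visible). Second, your argument runs in the same style as the paper's own proof of \cref{switt Lie algebra} for $\SWitt$, where even and odd tangent vectors are likewise computed with even and odd dual numbers, so the two propositions get uniform treatment. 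What the paper's route buys is brevity and the conceptual identification of $Γ^0_{0/2}$ as a unit group, the same viewpoint that drives \cref{rep-Gamma}.
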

\begin{proof}
    The component $Γ^0_{0/2}$ of the Heisenberg group superscheme may be identified as the sheaf of invertible functions on the formal superdisk $\Spf   \CC[\![z]\!][ζ]$. Therefore the Lie superalgebra of $Γ^0_{0/2}$ is the space of all functions on $\Spf   \CC[\![z]\!][ζ]$, which is clearly given by $H_{0/2}$ with the commutator bracket as in \cref{def of h}.
\end{proof}

\color{black}

\section{The super Witt group}

Next, we consider the super Witt group $\SWitt$ and its central extension the Neveu-Schwarz supergroup $\NS$. Compared with the super Heisenberg group, the super Witt group consisting of superconformal automorphisms of the formal deleted superdisk $\Spf \mathbb{C}(\!(z)\!)[ζ]$ is subtler and, in particular, non-abelian. Even its bosonic prototype, the Witt group, whose central extension is called the Virasoro group, seems to contradict the famous observation of Pressley and Segal \cite{Pressley.Segal.1986.lg} that the Virasoro group does not exist. A remarkable achievement of \cite{AlvarezVazquez.MunozPorras.PlazaMartin.1998.tafoseoabf} is the construction of the Virasoro group as a \textbf{formal} group scheme. This does not contradict the statement that the Virasoro group does not exist as a smooth Lie group. Informally, the formal Virasoro group is a formal extension of Segal's semigroup of annuli \cite{Segal.2004.tdocft} in the ``negative'' direction.

\begin{definition}[The super Witt group $\SWitt$]
\label{SWitt}
Define a functor
\begin{align*}
\SWitt\co \operatorname{\mathbf{SSch}}_\CC & \to \operatorname{\mathbf{Group}},\\
S & \mapsto \Aut^s_{R\operatorname{-Alg}} R(\!(z)\!)[ζ],
\end{align*}
of superconformal automorphisms of the topological $R$-algebra $R(\!(z)\!)[ζ] = H_{0/2} \hat{\otimes} R$, with the group law given by composition and $R = Γ(S, \OO_S)$. By a \emph{superconformal automorphism} we mean an $R$-algebra automorphism $\phi$ that preserves the odd distribution generated by $D_ζ = \frac{\partial}{\partial ζ} + ζ \frac{\partial}{\partial z}$, i.e., $\phi^* D_ζ = F D_ζ$ for some $F \in R(\!(z)\!)[ζ]$. An automorphism of a topological algebra is assumed to be a homeomorphism, i.e., to be continuous with a continuous inverse. The \emph{continuity} of $\phi \in \Aut^s_{R\operatorname{-Alg}} R(\!(z)\!)[ζ]$ in the $z$-adic topology means that for each $M \in \ZZ$ there exists an $N \in \ZZ$ such that $\phi (z^N H_{0/2}^+ \hat \otimes R) \subset z^M H_{0/2}^+ \hat \otimes R$.
\end{definition}

More explicitly, an automorphism is determined by the images of $z$ and $ζ$:
\begin{equation}
\begin{split}
\widehat{z} &= u(z) + ζ ω(z),\\
\widehat{ζ} &= η(z) + ζ v(z),
\end{split}
\label{aut}
\end{equation}
and the superconformality condition may be easily shown \cite[Section 2.1.1]{Witten.2019.nosRsatm} to be equivalent to the equation
\begin{equation}
\label{superconf}
D_ζ \widehat{z} = \widehat{ζ} D_ζ \widehat{ζ},
\end{equation}
which may be rewritten in components as the system
\begin{equation}
\begin{split}
ω  &= η v,\\
u' & = v^2 - η η'.
\end{split}
\label{rels}
\end{equation}

To represent the functor $\SWitt$, we will relate it to the functor 
\begin{align*}
\tilde{Γ^1_{0/2}} \co \operatorname{\mathbf{SSch}}_\CC & \to \operatorname{\mathbf{Set}},\\
S & \mapsto 
\{ (u + ζ η,v) \in Γ^1_{0/2}(S)
\times R_0(\!(z)\!) \; | \;
v^2 = u' + η η' \},
\end{align*}
where $u = u(z) \in R_0(\!(z)\!)$ and $η = η(z) \in R_1(\!(z)\!)$ and $u'$ and $η'$ denote derivatives in $z$.

\begin{theorem}
\label{Gamma-tilde}
    The map
    \begin{align*}
    \Aut^s_{R\operatorname{-Alg}} R(\!(z)\!)[ζ] & \to R(\!(z)\!)[ζ]_0 \times R(\!(z)\!)_0,\\
    (\widehat{z}, \widehat{ζ}) & \mapsto (u + ζ η, v),
    \end{align*}
    induces a natural isomorphism of functors
    \begin{equation}
    \label{psi}
    \psi_S \co \SWitt (S) \to \tilde{Γ^1_{0/2}}(S),
\end{equation}
regarded as functors
\begin{equation*}
\operatorname{\mathbf{SSch}}_\CC \to \operatorname{\mathbf{Set}}.
\end{equation*}
\end{theorem}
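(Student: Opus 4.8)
The plan is to exhibit $\psi_S$ as a bijection by writing down its inverse explicitly, checking that the two composites are identities, and then observing that every construction is given by the same universal substitution formulas and is therefore natural in $S$.

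First I would verify that $\psi_S$ is well defined, i.e.\ that the image of a superconformal automorphism really lands in $\tilde{\Gamma^1_{0/2}}(S)$. Writing $\phi$ as in \eqref{aut}, the superconformality relations \eqref{rels} give at once $\omega = \eta v$ and $v^2 = u' + \eta\eta'$, so the pair $(u + \zeta\eta, v)$ satisfies the defining equation of $\tilde{\Gamma^1_{0/2}}(S)$. The remaining point is that $u + \zeta\eta \in \Gamma^1_{0/2}(S)$. Here I would use that $\phi$ is a homeomorphism: continuity forces $u$ to have finite polar part, and, reducing modulo the odd part and the nilradical, the body of $\phi$ is a continuous automorphism of $\CC(\!(z)\!)$, hence an honest coordinate change $z \mapsto c_1 z + c_2 z^2 + \cdots$ with $c_1 \neq 0$. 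Lifting back over $R$ by the super analogue of \Cref{nilpotent-red}, the coefficient $a_1$ of $z$ in $u$ is a unit and all coefficients $a_n$ with $n \le 0$ are nilpotent; since $\eta$ is odd, this is exactly the condition defining $\Gamma^1_{0/2}(S)$.

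Next I would construct the inverse $\Phi_S \co \tilde{\Gamma^1_{0/2}}(S) \to \SWitt(S)$. Given $(u + \zeta\eta, v)$, set $\omega \coeq \eta v$ and define a continuous $R$-algebra endomorphism $\phi$ by the substitution $\widehat{z} = u + \zeta\omega$, $\widehat{\zeta} = \eta + \zeta v$; well-definedness and continuity of the substitution follow from $u + \zeta\eta \in \Gamma^1_{0/2}(S)$, which guarantees that $\widehat{z}$ is a topologically admissible coordinate (invertible with leading unit, positive-valuation body). By construction $\phi$ satisfies \eqref{rels}, hence is superconformal. The crux, and the step I expect to be the main obstacle, is showing that $\phi$ is an automorphism, i.e.\ admits a continuous inverse. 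For this I would argue that the body of $\phi$ is an invertible coordinate change (because $a_1$ is a unit), and then invert $\phi$ itself by successive approximation along the nilpotent filtration supplied by the $\Gamma^1_{0/2}$ condition (the even coefficients $a_n$ with $n \le 0$, together with the odd $\eta$, being nilpotent). The relation $v^2 = u' + \eta\eta'$ forces the constant term of $v$ to be a unit, so $v$ is invertible and $\widehat{\zeta}$ can be inverted as well, feeding into the inversion of the odd part. This is the super refinement of the representability argument for $\Gamma_\bullet$ in \Cref{rep-Gamma} and of \Cref{nilpotent-red}.

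Finally, the two composites are immediate. The composite $\psi_S \circ \Phi_S$ returns $(u + \zeta\eta, v)$, since $\Phi_S$ builds $\phi$ out of precisely these $u$, $\eta$, $v$; and $\Phi_S \circ \psi_S$ recovers $\phi$ because the only datum dropped by $\psi_S$, namely $\omega$, is reconstructed as $\eta v$ by the first relation of \eqref{rels}. Naturality in $S$ is then formal: for a morphism $S' \to S$ inducing $R \to R'$, both $\psi$ and $\Phi$ are obtained by applying the same universal substitution formulas to the images of the series, and $\SWitt$, $\Gamma^1_{0/2}$, and the relation $v^2 = u' + \eta\eta'$ are all manifestly functorial, so the naturality squares commute.
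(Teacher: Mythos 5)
Your proposal is correct and follows essentially the same route as the paper's proof: the identical well-definedness argument (continuity of $\phi$ forces the coefficients of non-positive powers of $z$ in $u$ to be nilpotent, and reduction forces the coefficient of $z$ to be a unit), injectivity via reconstructing $\omega = \eta v$ from \eqref{rels}, and surjectivity by defining the endomorphism from $(u + \zeta\eta, v)$ and checking it is an automorphism by passing to the reduced setting, where $u_\red$ is compositionally invertible and $v_\red$ multiplicatively invertible. The only difference is cosmetic: where the paper disposes of the nilpotent directions by invoking that being an automorphism can be checked on $S_\red$ (an openness/nilpotent-insensitivity principle), you propose to invert directly by successive approximation along the nilpotent filtration — two justifications of the same reduction step.
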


\begin{proof}
First off, we need to see that the pair $(u + ζ η, v)$ is in $\tilde{Γ^1_{0/2}}(S)$. We know Equation \eqref{rels} is satisfied for $u, v$, and $η$ coming from a superconformal automorphism. Thus, we need to see that $u + ζ η \in Γ^1_{0/2}(S)$.

Since $z$ is invertible in the ring $R(\!(z)\!)[ζ]$, so is $\hat{z}$ and thereby $u(z)$, which will have the form of \cref{nilpotent-red}.
The series $z + z^2 + \dots$ converges in the $z$-adic topology and the automorphism $(z \,| \,ζ) \mapsto (\hat{z} \, | \, \hat{ζ})$ is continuous by definition, hence the coefficients by the nonpositive powers of $z$ in $u(z)$ must be nilpotent, while \eqref{rels} implies that the same is true about the negative powers of $z$ in $v(z)$.
Moreover, the coefficient by $z$ in $u(z)$ must be invertible, because otherwise the induced automorphism of $R_\red (\!(z)\!)$ would not be injective.
Thus, $u + ζ η \in Γ^1_{0/2}(S)$.

Now it is clear that we have got a well-defined map \eqref{psi} and that it is a natural transformation.

The same equations \eqref{rels} show that an element of $\Im \psi_S \subset \tilde{Γ^1_{0/2}}(S)$ determines the automorphism \eqref{aut} uniquely, i.e., $\psi_S$ is injective.

We claim that $\psi_S$ is surjective, or more concretely, every element $(u(z) + ζ η(z), v(z)) \in \tilde{Γ_{0/2}^1}(S)$ comes from an automorphism \eqref{aut}. Indeed, set $ω \coeq η v$, so that Equations \eqref{rels} hold. Then Equations \eqref{aut} define at least a superconformal endomorphism of the $R$-algebra $R(\!(z)\!)[ζ]$. However, it is in fact an automorphism for the following reason.

Since for an endomorphism to be an automorphism is an open condition on the base superscheme $S$, it is enough to check it on $S_\red$, where \eqref{aut} turns into
\begin{align}
\nonumber
    \hat{z} & = u_\red (z),\\
    \label{red}
    \hat{ζ} & = ζ \cdot v_\red (z)
\end{align}
with $u_\red(z)$ being a formal power series in strictly positive powers of $z$ and an invertible coefficient by $z$ and $v_\red(z)$ being a formal power series with an invertible free term. The power series $u_\red(z)$ has a (compositional) inverse $u^{-1}_\red (\hat{z})$, which may be constructed recursively \cite[Proposition 5.4.1]{Stanley.1999.ec} or using the standard formula which labels the terms of $u^{-1}_\red$ by tree-level Feynman diagrams, cf.\ the Lagrange inversion formula \cite[Section 5.4]{Stanley.1999.ec}. Now, since $v_\red$ is a power series with the degree-zero coefficient invertible, $v_\red$ is multiplicatively invertible and we can solve \eqref{red} for $ζ$, so that the inverse endomorphism of \eqref{red} is given by
\begin{align*}
    z &  = u_\red^{-1}(\hat{z}),\\
    ζ &  = \hat{ζ} \cdot \frac{1}{v_\red(u^{-1}_\red (\hat{z}))}.
\end{align*}
\end{proof}

\begin{cor}
\label{SWitt-rep}
There is a formal group superscheme $\SWitt$ representing the functor $\SWitt$.
\end{cor}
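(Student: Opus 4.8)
The plan is to deduce representability of $\SWitt$ from that of the auxiliary functor $\tilde{Γ^1_{0/2}}$, to which it is identified by the natural isomorphism $\psi$ of \Cref{Gamma-tilde}. Concretely, I would first show that $\tilde{Γ^1_{0/2}}$, regarded as a functor to $\operatorname{\mathbf{Set}}$, is representable by a formal superscheme, and then transport the group structure. Since $\psi$ is a natural isomorphism of set-valued functors and the composition law on $\SWitt$ is a natural transformation $\SWitt \times \SWitt \to \SWitt$ (with inverse $\SWitt \to \SWitt$), conjugating by $\psi$ and $\psi^{-1}$ produces natural transformations on $\tilde{Γ^1_{0/2}}$; once $\tilde{Γ^1_{0/2}}$ is known to be representable, Yoneda upgrades these to morphisms of formal superschemes, turning the representing object into a formal group superscheme and $\psi$ into an isomorphism of formal group superschemes. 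Thus the whole statement reduces to representability of $\tilde{Γ^1_{0/2}}$ as a set-valued functor.

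To represent $\tilde{Γ^1_{0/2}}$, I would exhibit it as a closed formal subfunctor of a product of representable (or ind-representable) functors. The component $Γ^1_{0/2}$ is representable by (the argument proving) \Cref{rep-Gamma}, which simultaneously handles the leading-term invertibility and the nilpotency of the nonpositive coefficients of $u$ through the $I_{M,p}$-adic completion; indeed $Γ^1_{0/2} \cong Γ^0_{0/2}$ via multiplication by $z$. Adjoining an even Laurent series $v = \sum_n c_n z^n$ amounts to taking a product with the even-Laurent-series functor $L \co S \mapsto R_0(\!(z)\!)$, and the defining relation $v^2 = u' + ηη'$ is precisely the equalizer of the two morphisms $Γ^1_{0/2} \times L \to L$ sending $(u + ζη, v)$ to $v^2$ and to $u' + ηη'$, respectively. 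An equalizer of two maps into such a Laurent-series target is cut out, on the level of representing algebras, by the ideal generated by the coefficientwise differences $[z^n]\,(v^2 - u' - ηη')$; completing as in \Cref{rep-Gamma} and passing to $\Spf$ of the quotient yields a formal superscheme representing $\tilde{Γ^1_{0/2}}$. Equivalently, one may build the representing algebra by hand: adjoin even variables $c_n$ to the representing algebra $A$ of $Γ^1_{0/2}$, impose the relations coming from $v^2 = u' + ηη'$, and complete in the topology generated by the $I_{M,p}$.

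The main obstacle is the compatibility of the square-root relation with the formal topology, i.e.\ checking that $v^2 = u' + ηη'$ genuinely cuts out a closed formal subscheme of an admissible formal superscheme rather than an ill-behaved limit. The key point, already extracted in the proof of \Cref{Gamma-tilde}, is that for $(u + ζη, v) \in \tilde{Γ^1_{0/2}}(S)$ the condition $u + ζη \in Γ^1_{0/2}(S)$ forces the constant term of $u'$ to be invertible and its nonpositive coefficients to be nilpotent, whence the relation $v^2 = u' + ηη'$ forces the negative coefficients $c_n$ of $v$ to be nilpotent as well. This nilpotency is exactly what makes the completion well defined and keeps the target an honest formal (rather than merely ind-) superscheme on the relevant locus, so that the equalizer ideal generates a closed formal subscheme inside an admissible topological superalgebra. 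Once this is verified the remaining steps are formal: representability of $\tilde{Γ^1_{0/2}}$ and, via $\psi$ and Yoneda, representability of $\SWitt$ together with its formal group superscheme structure.
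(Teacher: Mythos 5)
Your proposal is correct and takes essentially the same route as the paper: the paper likewise reduces representability of $\SWitt$ to that of $\tilde{Γ^1_{0/2}}$ via the natural isomorphism $\psi$ of \Cref{Gamma-tilde}, represents the latter by $\Spf$ of a completed algebra in the coefficients of $u$, $η$, and $v$ modulo the ideal generated by the coefficients of the series $u' - v^2 + ηη'$ (your equalizer ideal), and induces the group structure from the group structure on the functor. Your explicit attention to the nilpotency of the negative coefficients of $v$, which makes the completion admissible, is a point the paper leaves implicit in its appeal to ``arguing as in the proof of \Cref{rep-Gamma}.''
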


\begin{proof}
As in the proof of \Cref{rep-Gamma}, we will describe a formal affine superscheme representing the functor $\SWitt$ to $\operatorname{\mathbf{Set}}$ and induce a group structure from the group structure on the functor.

Indeed, arguing as in the proof of \Cref{rep-Gamma}, we can see that the functor $\tilde{Γ^1_{0/2}}$, which is naturally isomorphic to $\SWitt$ by \Cref{Gamma-tilde}, is represented by the formal superscheme
\begin{equation*}
\Spf \left (\varprojlim_{M,p} (B \otimes C)   /(I_{M,p} + I_M ) \right),
\end{equation*}
where $I_{M,p}$ is defined in \eqref{IMp} and
\begin{align*}
   B & \coeq \CC [\dots, x_{-1}, x_0, x_1, x_1^{-1}, x_2, x_3, \dots | \; \dots, \xi_{-1}, \xi_{0}, \xi_1, \xi_2, \dots ]  ,\\
   C & \coeq \CC [\dots, y_{-2}, y_{-1}, y_0, y_0^{-1}, y_1, y_2, \dots] ,\\
   I_M & \coeq (u'-v^2 + η η')_M,
\end{align*}
with
\begin{equation*}
u \coeq \sum_{n = -M}^\infty x_n z^n, \qquad
    η \coeq \sum_{n = -M}^\infty \xi_n z^n,\qquad
       v \coeq \sum_{n = -M}^\infty y_n z^n,
\end{equation*}
and the ideal $I_M = (u'-v^2 + η η')_M$ is understood as the ideal generated by the quadratic polynomials in $x_n$'s, $y_n$'s, and $\xi_n$'s equal to the coefficients by various powers of $z$ in the series $u'-v^2 + η η'$. In other words, this series is the generating function for the generators of the ideal $I_M$.
\end{proof}

\begin{cor}
 The natural transformation of functors of points
 \begin{align*}
    \SWitt (S) &\to Γ_{0/2}^1 (S),\\
    (\widehat{z}, \widehat{ζ}) & \mapsto u + ζ η,
\end{align*}
 defines an unramified double covering map of formal superschemes:
 \begin{equation*}
 \SWitt\to Γ_{0/2}^1.
 \end{equation*}
\end{cor}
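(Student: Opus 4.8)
The plan is to reduce everything to \Cref{Gamma-tilde} and then recognize the transformation as the classical ``adjoin a square root of a unit'' double cover. Under the natural isomorphism $\psi_S \colon \SWitt(S) \to \tilde{Γ^1_{0/2}}(S)$, the stated map becomes the projection $(u + ζη, v) \mapsto u + ζη$ forgetting $v$. Hence its fiber over a point $u + ζη \in Γ^1_{0/2}(S)$ is precisely the set of even $v \in R_0(\!(z)\!)$ with $v^2 = u' + ηη'$, and the whole question is to understand this square-root condition scheme-theoretically.

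First I would check that $w \coeq u' + ηη'$ is an invertible even Laurent series. Since $u + ζη \in Γ^1_{0/2}(S)$, the coefficient of $u$ by $z^1$ lies in $R_0^\times$ and all lower coefficients are nilpotent; differentiating moves the invertible coefficient into the $z^0$ slot of $u'$ and keeps the lower ones nilpotent, while $ηη'$ contributes only coefficients that are products of odd elements, hence nilpotent. By the even part of \cref{nilpotent-red}, $w$ is invertible, with $N = 0$.

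Next I would factor $w$ and count its square roots. Writing $w = w_+ + w_-$ with $w_+ \in R[\![z]\!]$ (nonnegative powers, invertible constant term) and $w_-$ the strictly negative powers (all coefficients lying in a nilpotent ideal $\mathfrak{n}$), one has $w = w_+(1 + \epsilon)$ with $\epsilon = w_+^{-1} w_-$ having coefficients in $\mathfrak{n}$. Solving $2\delta + \delta^2 = \epsilon$ by Newton iteration (convergent since $\mathfrak{n}$ is nilpotent and $2$ is invertible) produces a unique $1 + \delta$ with $(1+\delta)^2 = 1 + \epsilon$, and a power series with invertible constant term has a square root determined by a square root of its constant term times the unique series square root congruent to $1$. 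Combining these, $v = y_0 \cdot s$, where $s$ is a fixed invertible Laurent series with coefficients in the representing ring $A_1$ of $Γ^1_{0/2}$ and $y_0$ is the $z^0$-coefficient of $v$; every other coefficient $y_n$ is thereby eliminated as an $A_1$-multiple of $y_0$, using invertibility of $y_0$ and nilpotency of $\mathfrak{n}$. The infinite system of coefficientwise square-root equations from $I_M$ in \Cref{SWitt-rep} then collapses to a single relation $y_0^2 = c$ with $c \in A_1^\times$ a unit (congruent to $x_1$ modulo nilpotents, so that over $S_\red$ it becomes $t^2 = (a_1)_\red$).

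Finally I would conclude from the presentation $A_1[y_0]/(y_0^2 - c)$: it is free of rank $2$ over $A_1$ with basis $\{1, y_0\}$, hence finite and flat of degree $2$, and $\Omega^1_{(A_1[y_0]/(y_0^2 - c))/A_1}$ is generated by $dy_0$ subject to $2 y_0 \, dy_0 = 0$, which vanishes because $2 y_0$ is a unit ($y_0^2 = c \in A_1^\times$ and the ground field has characteristic $0$). Thus the map is finite, flat, and unramified of degree $2$, i.e.\ an unramified double covering. The main obstacle is the factorization step: one must correctly handle the infinitely many coefficients of the series $v$, and show that after using nilpotency to absorb the negative tail and $z$-adic convergence to normalize the positive tail, the entire infinite system reduces to the single separable quadratic $y_0^2 = c$ over the base.
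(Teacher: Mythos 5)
Your strategy is the one the paper itself intends: the paper offers no separate argument for this corollary, treating it as immediate from \Cref{Gamma-tilde} and \Cref{SWitt-rep}, and like the paper you reduce to the observation that under $\psi_S$ the map becomes $(u+\zeta\eta,v)\mapsto u+\zeta\eta$, whose fiber is the set of square roots of the unit $w=u'+\eta\eta'$. Your verification that $w$ is invertible is correct, and so is your final step: once one has the presentation $A_1[y_0]/(y_0^2-c)$ with $c$ a unit, the map is finite flat of degree $2$ with $\Omega^1$ killed by the unit $2y_0$, hence an unramified double covering.

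The genuine gap is the step you yourself call the main obstacle, and the tools you set up do not close it. Your two uniqueness statements classify square roots only within restricted classes: the unique square root of $1+\epsilon$ \emph{congruent to} $1$, and the unique \emph{power-series} square root of $w_+$ with prescribed constant term. But a point of the fiber is an arbitrary Laurent series $v\in R_0(\!(z)\!)$ with $v^2=w$: a priori its infinitely many positive coefficients are uncontrolled and its negative coefficients are not known to be nilpotent, so the assertion ``$v=y_0\cdot s$, every other coefficient is thereby eliminated'' is exactly what needs proof, not a consequence of the preceding lemmas. Without it you get at best formal \'etaleness of $\SWitt\to\Gamma^1_{0/2}$ (unique infinitesimal lifting of square roots, since $2v$ is a unit), but not module-finiteness of rank $2$, i.e.\ not a double covering. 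A correct completion runs as follows. Let $\sigma$ be the canonical square root of $w/c$ and set $t\coeq v\sigma^{-1}$, so $t^2=c$ is a scalar unit. Reduce modulo the nilradical and embed $R_{0,\red}$ into the product of its quotients by minimal primes: over a domain $D$, the ring $D(\!(z)\!)$ is a domain, so a Laurent-series square root of a unit of $D[\![z]\!]$ lies in $D[\![z]\!]$ and is scalar times the canonical root; hence all nonconstant coefficients of $t$ are nilpotent and $t_0$ is a unit. Writing $x=t-t_0$, the relations $2t_0x_k+(x^2)_k=0$ for $k\neq 0$ show first that the finitely many negative coefficients generate an ideal $J$ with $J\subseteq \mathfrak{a}J$ for a nil ideal $\mathfrak{a}$, so $J=0$ by the determinant trick, and then that the positive coefficients vanish by induction on degree; thus $t=t_0$ is scalar and the parametrization is surjective. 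Finally, a small fixable slip: $\sigma$ has $z^0$-coefficient $1+(\text{nilpotent})$, not $1$, so the fiber coordinate should be $t_0=(v\sigma^{-1})_0$ rather than the $z^0$-coefficient $y_0$ of $v$; with your normalization the single relation is $y_0^2=c\,\sigma_0^2$, not $y_0^2=c$.
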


Now we would like to describe the Lie algebra of the super Witt group $\SWitt$.

\begin{definition}
    The \emph{super Witt Lie algebra} is the Lie superalgebra of superconformal vector fields on the formal deleted superdisk $\Spf \CC(\!(z)\!)[ζ]$ with the superconformal structure given by the odd vector field $D_ζ = \frac{\partial}{\partial ζ} + ζ \frac{\partial}{\partial z}$, i.e.,
    \begin{equation*}
    \switt \coeq \left\{ X = f (z | ζ) \pp{z} + g(z|ζ) \pp{ζ}  \;\middle|\; [X, D_ζ ] = F D_ζ \text{ for some } F \in \CC(\!(z)\!)[ζ]\right\}.
    \end{equation*}
\end{definition}

\begin{prop}[\cite{Manin.1988.NSsadefMs} and also \cite{Maxwell.2022.tsMfaSG}]
    $\switt = \left\{ [h D_ζ, D_ζ] \mid h \in \CC(\!(z)\!)[ζ]\right\}$.
\end{prop}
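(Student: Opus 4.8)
The plan is to study the $\CC$-linear map $\Phi\colon h \mapsto [hD_ζ, D_ζ]$ and show that its image is exactly $\switt$. First I would record the two structural identities $D_ζ^2 = \partial_z$ (equivalently $\tfrac12[D_ζ,D_ζ]=\partial_z$) and $[\partial_z, D_ζ]=0$. A direct application of the graded Leibniz rule for the odd derivation $D_ζ$ then yields the explicit formula
\[
[hD_ζ, D_ζ] = 2h\,\partial_z + (-1)^{|h|}(D_ζ h)\,D_ζ,
\]
which serves as the workhorse for both inclusions.

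For the inclusion $\{[hD_ζ,D_ζ]\} \subseteq \switt$, I would bracket this formula with $D_ζ$ once more. Using $[\partial_z,D_ζ]=0$ and $D_ζ^2=\partial_z$ (equivalently, feeding $D_ζ$, $D_ζ$, and $hD_ζ$ into the graded Jacobi identity), the $\partial_z$-contributions cancel and one is left with $[[hD_ζ,D_ζ],D_ζ] = -(\partial_z h)\,D_ζ$. Since this is a multiple of $D_ζ$, every element $[hD_ζ,D_ζ]$ satisfies the defining condition of a superconformal vector field, hence lies in $\switt$.

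For the reverse inclusion $\switt \subseteq \{[hD_ζ,D_ζ]\}$, I would first make superconformality explicit in coordinates. Writing a homogeneous field as $X = f\partial_z + g\partial_ζ$ and expanding $[X,D_ζ]$ in the basis $\partial_z,\partial_ζ$ gives $[X,D_ζ] = (g - D_ζ f)\partial_z - (D_ζ g)\partial_ζ$; comparing with $FD_ζ = F\partial_ζ + Fζ\partial_z$ shows that $X \in \switt$ if and only if $F = -D_ζ g$ together with the single scalar relation
\[
D_ζ f = g + ζ\,D_ζ g.
\]
Matching the $\partial_z$- and $\partial_ζ$-coefficients in the formula for $\Phi$ then forces the candidate $h = \tfrac12(f - gζ)$, and it only remains to check that $(-1)^{|h|}D_ζ h = g$ for this $h$. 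A short computation with $D_ζ ζ = 1$ and the Leibniz rule reduces this identity to precisely the displayed superconformality relation, so $[hD_ζ, D_ζ] = X$. By linearity of $\Phi$ it suffices to argue on homogeneous $X$, and the odd case runs parallel to the even one carried out here.

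The main obstacle is purely the super sign and parity bookkeeping: every commutator must carry the Koszul sign $[A,B]=AB-(-1)^{|A||B|}BA$, and the parity $|h|$ must be tracked through each Leibniz expansion (the harmless factor $\tfrac12$ is available since we work over $\CC$). There is no analytic difficulty, as all of $f$, $g$, $D_ζ f$, etc.\ are formal Laurent series and every manipulation is algebraic; the only genuine content is the observation that the one relation $D_ζ f = g + ζ\,D_ζ g$ characterizing $\switt$ is exactly what makes $h = \tfrac12(f-gζ)$ reproduce $X$.
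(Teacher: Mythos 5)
Your argument is correct, but there is nothing in the paper to compare it against: the paper states this proposition with citations to Manin and to the first author's earlier work and gives no proof of its own. Your self-contained computation is therefore a genuine verification rather than a rederivation, and it is consistent with the only place the paper touches this material, namely the proof that $\switt$ is the Lie algebra of the formal group $\SWitt$, where even tangent vectors are identified with $\tfrac{1}{2}[fD_ζ,D_ζ]$, odd ones with $-\tfrac{1}{2}[ζ gD_ζ,D_ζ]$, and the two are combined via $h=\tfrac{1}{2}f-\tfrac{1}{2}ζ g$ --- exactly the candidate $h=\tfrac{1}{2}(f-gζ)$ that your coefficient matching produces. Your workhorse identity $[hD_ζ,D_ζ]=2h\,\partial_z+(-1)^{|h|}(D_ζ h)D_ζ$ and its consequence $[[hD_ζ,D_ζ],D_ζ]=-(\partial_z h)D_ζ$ are both correct, and they settle the inclusion $\{[hD_ζ,D_ζ]\}\subseteq\switt$ cleanly.

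One caveat on the reverse inclusion: your displayed coordinate formulas hold verbatim only for even $X$. For homogeneous $X=f\pp{z}+g\pp{ζ}$ of parity $|X|$ one finds
\begin{equation*}
[X,D_ζ]=\bigl(g-(-1)^{|X|}D_ζ f\bigr)\pp{z}-(-1)^{|X|}(D_ζ g)\pp{ζ},
\end{equation*}
so superconformality reads $D_ζ f=(-1)^{|X|}g+(D_ζ g)ζ$; for odd $X$ this is $D_ζ f=-g+(D_ζ g)ζ$, not the relation you display. With this corrected relation the same candidate $h=\tfrac{1}{2}(f-gζ)$, now odd, does satisfy $(-1)^{|h|}D_ζ h=g$, since $D_ζ(gζ)=(D_ζ g)ζ+g$ for $g$ even, whence $D_ζ h=\tfrac{1}{2}\bigl(D_ζ f-(D_ζ g)ζ-g\bigr)=-g$. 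So your assertion that the odd case ``runs parallel'' is true, but the parallel includes these sign changes; since, as you yourself note, the entire content of the argument is sign bookkeeping, the odd case is worth writing out rather than leaving implicit.
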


The following statement, which identifies the Lie algebra of the Lie supergroup $\SWitt$ as $\switt$, fits the general principle ``the Lie algebra of automorphisms of a geometric object preserving a geometric structure is the Lie algebra of those vector fields on the geometric object which preserve this geometric structure.'' However, the specifics of the situation (infinite dimensionality, the super case, formal group schemes rather than Lie groups) obviously require proper justification.

\begin{prop}\label{switt Lie algebra}
    The super Witt algebra $\switt$ is the Lie algebra of the formal group superscheme $\SWitt$. 
\end{prop}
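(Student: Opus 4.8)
The plan is to compute the Lie superalgebra of the formal group superscheme $\SWitt$ directly from its functor of points, using the standard description of the Lie algebra of a (formal) group scheme as the kernel of the reduction map over the dual numbers. Concretely, I would set $\operatorname{Lie}(\SWitt) = \ker\bigl(\SWitt(\CC[\epsilon]) \to \SWitt(\CC)\bigr)$, where $\CC[\epsilon] = \CC[\epsilon]/(\epsilon^2)$ is taken with $\epsilon$ \emph{even} to produce the even part $\operatorname{Lie}(\SWitt)_{\bar 0}$ and with $\epsilon$ \emph{odd} to produce the odd part $\operatorname{Lie}(\SWitt)_{\bar 1}$, the bracket being induced by the group law, i.e.\ by the order-$\epsilon_1\epsilon_2$ part of the group commutator over $\CC[\epsilon_1,\epsilon_2]$. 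By \Cref{SWitt-rep} the functor is representable, so this kernel coincides with the tangent space at the identity of the representing formal superscheme, and the two descriptions agree.

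For the first step I would evaluate $\SWitt(\CC[\epsilon])$ on a first-order neighborhood of the identity automorphism $(\hat z, \hat ζ) = (z, ζ)$, i.e.\ of $u = z$, $ω = 0$, $η = 0$, $v = 1$. Writing the deformed automorphism in the form \eqref{aut} as $u = z + \epsilon u_1$, $ω = \epsilon ω_1$, $η = \epsilon η_1$, $v = 1 + \epsilon v_1$ and assigning $u_1, ω_1, η_1, v_1$ the parities dictated by that of $\epsilon$ together with the evenness of $\hat z$ and the oddness of $\hat ζ$, the infinitesimal generator is $X = (u_1 + ζ ω_1)\partial_z + (η_1 + ζ v_1)\partial_ζ$. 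Linearizing the superconformality relations \eqref{rels} (equivalently \eqref{superconf}) at the identity yields exactly $ω_1 = η_1$ and $u_1' = 2 v_1$, which is precisely the condition $[X, D_ζ] = F D_ζ$; thus the even (resp.\ odd) first-order automorphisms are exactly the even (resp.\ odd) superconformal vector fields, and the underlying super vector space of $\operatorname{Lie}(\SWitt)$ is identified with $\switt$.

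For the second step I would verify that the induced bracket is the Lie bracket of vector fields. Since the group law on $\SWitt$ is composition of automorphisms, I would take two first-order elements $\phi_X, \phi_Y$ over $\CC[\epsilon_1,\epsilon_2]$ with $\epsilon_i$ of the relevant parities, form the group commutator $\phi_X \circ \phi_Y \circ \phi_X^{-1} \circ \phi_Y^{-1}$, and extract its coefficient at $\epsilon_1\epsilon_2$; by the standard computation this coefficient is the commutator of the vector fields $X$ and $Y$, i.e.\ the usual (super) Lie bracket $[X,Y]$, matching the bracket on $\switt$. Combined with the first step, this identifies $\operatorname{Lie}(\SWitt) = \switt$ as Lie superalgebras.

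The main obstacle I anticipate is reconciling the formal, pro-finite structure of the representing algebra from \Cref{SWitt-rep} with the naive tangent-space computation. In that algebra the nilpotency ideals $I_{M,p}$ force the coefficients by the nonpositive powers of $z$ to be nilpotent, so I must argue that over the dual numbers these conditions impose no restriction—because $\epsilon^2 = 0$ already makes every first-order coefficient square-zero—so that the full space of Laurent-series superconformal vector fields (including negative powers of $z$) appears in $\operatorname{Lie}(\SWitt)$, exactly matching $\switt$ and its description $\switt = \{[h D_ζ, D_ζ] \mid h \in \CC(\!(z)\!)[ζ]\}$. Careful bookkeeping of the Koszul signs in the super commutator (and of the continuity constraints defining $\SWitt$) will also be needed, but these are routine once the framework above is fixed.
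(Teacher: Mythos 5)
Your proposal is correct and follows essentially the same route as the paper: both compute the tangent space at the identity over even and odd dual numbers, linearize the superconformality relations \eqref{rels} to obtain $\omega_1 = \eta_1$ and $u_1' = 2v_1$ (with the parity of the dual number forcing the vanishing of the appropriate components in each case), and identify the resulting vector fields with $\switt = \{[h D_ζ, D_ζ]\}$. The only difference is in the bracket: where you propose expanding the group commutator over $\CC[\epsilon_1,\epsilon_2]$, the paper short-circuits this by observing that $\SWitt$ is a group subsuperscheme of the automorphism group of the formal deleted superdisk, so its Lie algebra is automatically a Lie subalgebra of the vector fields on it with the standard bracket.
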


\begin{proof}
The super Witt group $\SWitt$ is a group subsuperscheme of the formal group superscheme of all automorphisms of the formal deleted superdisk $\Spf \CC(\!(z)\!)[ζ]$. Therefore the Lie superalgebra of $\SWitt$ is a subalgebra of the Lie superalgebra of vector fields on $\Spf \CC(\!(z)\!)[ζ]$. Thus, we just need to identify this subalgebra as $\switt$.

An even tangent vector at the identity element of $\SWitt$ is given by \eqref{aut} with all the components $u,v, η, ω\in \CC[\epsilon](\!(z)\!)$ being Laurent series in $z$ over $\CC[\epsilon]$, where $\epsilon$ is an even dual number: $\abs{\epsilon} = 0$, $\epsilon^2 = 0$, such that
\begin{align*}
    \left.\big(u + ζ ω, η + ζ v\big)\right|_{\epsilon = 0} = (z, ζ)
\end{align*}
and $u + ζ ω$ is even and $η + ζ v$ is odd.
Then we must have $ω = η =0$ by a parity argument and $u(z) = z+\epsilon f(z)$, $v(z) = 1$, and $v(z) = 1 + \tfrac{1}{2} \epsilon f(z)$. Thus, an even tangent vector may be written as follows:
\begin{align*}
    \big(z,ζ\big) + \epsilon\big(f,\;\tfrac{1}{2}f'ζ  \big),
\end{align*}
which may be identified with the commutator $f \pp{z} + \tfrac{1}{2}f' ζ \pp{ζ}  =  \tfrac{1}{2}[f D_ζ, D_ζ]$ for $f \in \CC(\!(z)\!)$.

An odd tangent vector at the identity element of $\SWitt$ is given by \eqref{aut} with all the components $u,v, η, ω \in \CC[\delta](\!(z)\!)$ being Laurent series in $z$ over $\CC[\delta]$, where $\delta$ is an odd dual number: $|\delta|=1$, such that
\begin{equation*}
\left.\big(u + ζ ω, η + ζ v\big)\right|_{\delta = 0} = (z, ζ)
\end{equation*}
and $u + ζ ω$ is even and $η + ζ v$ is odd.
Then we must have $u(z) = z$, $v(z) = 1$, and, because of \eqref{rels}, $ω = η $ with $η$ arbitrary, which we may write as $η = \delta g(z) $ for $g \in \CC(\!(z)\!)$, so that the odd tangent vector is written as follows:
\begin{align*}
  \big(z, ζ\big) +  \delta\big(- ζ g ,\;   g\big),
\end{align*}
which may be identified with the commutator $- ζ g \pp{z}+g\pp{ζ}=-\frac{1}{2}[ζ g D_ζ, D_ζ]$.

We can combine these formulas into $[h D_ζ, D_ζ] = \tfrac{1}{2}[f D_ζ, D_ζ] - \frac{1}{2}[ζ g D_ζ, D_ζ]$ for $h = \tfrac{1}{2}f - \tfrac{1}{2}ζ g$. Now note that this is the form of a general element of $\switt$.
\end{proof}

The $\switt$ algebra acts on $j/2$-differentials by Lie derivative \cite{Ueno.Yamada..soogrotsaaasaotMs} and \cite[Section 3.11]{Deligne.Morgan.1999.nos}: an element $[f D_ζ, D_ζ] \in \switt$ acts on $g [dz|dζ]^{\otimes j} \in H_{j/2}$ by the formula
\begin{equation*}
\rho ([f D_ζ, D_ζ]) \; g [dz|dζ]^{\otimes j} \coeq \left( [f D_ζ, \D_ζ] g + \frac{j}{2} \frac{\partial f}{\partial z} g \right) [dz|dζ]^{\otimes j}.
\end{equation*}
This action defines a derivation of the $\CC$-algebra $H_\bullet$.

\section{Actions on \texorpdfstring{$H_\bullet$}{}}\label{actions}

Having defined the formal super Heisenberg and Witt groups, we now describe their action by automorphisms on the super vector space $H_\bullet$ and the induced action on the super Sato Grassmannian.

For a superscheme $S$, let $\hat{(H_{j/2})}_S\coeq H_{j/2}\mathrel{\hat{\otimes}} \mathcal{O}_S$ be the completion of the $\mathcal{O}_S$-module $H_{j/2}\mathrel{\otimes} \mathcal{O}_S$ with respect to the $z$-adic topology. 

The formal super Heisenberg group $Γ_\bullet$ acts on $H_\bullet$ as
\begin{align}\label{gamma H action}
    Γ_{i/2}\times H_{j/2}\to H_{(i+j)/2}
\end{align}
defined by the natural multiplication on $H_\bullet$ as in \eqref{H multiplication}. In particular, in coordinates
\begin{align*}
Γ_{i/2}(S)\times \hat{(H_{j/2})}_S&\to \hat{(H_{(i+j)/2})}_S,\\
    \left(g(z|ζ)\;[dz|dζ]^{\otimes i}, \; f(z|ζ)\;[dz|dζ]^{\otimes j}\right) & \mapsto g(z|ζ) f(z|ζ)\; [dz|dζ]^{\otimes (i+j)}.
\end{align*}
Since the super Heisenberg group is defined as those invertible elements of $H_\bullet$, this action is clearly an automorphism of the super vector space $H_\bullet$.

The formal supergroup $\SWitt$ acts on $H_\bullet$ by pushforward of $j/2$-differentials via automorphisms of the function ring $H_{0/2}$. The action map
\begin{equation}\label{switt H action}
\SWitt \times H_{j/2} \to H_{j/2}
\end{equation}
is defined by the natural transformation of the functors of points
\begin{align*}
\SWitt(S)\times \hat{(H_{j/2})}_S & \to \hat{(H_{j/2})}_S,\\
\left(\phi, \; f(z,ζ)[dz|dζ]^{\otimes j} \right) & \mapsto
 \phi \left(f (z,ζ)[z|dζ]^{\otimes j}\right). \nonumber
\end{align*}
More precisely, if $\phi$ is given by $z \mapsto \widehat{z} = h (z,ζ)$ and $ζ \mapsto \widehat{ζ} = χ(z, ζ)$, then
\begin{equation*}
\phi \left( f(z,ζ)[dz|dζ]^{\otimes j} \right) \coeq f\big(h(z, ζ), χ (z, ζ)\big) [dh(z,ζ)|dχ(z,ζ)]^{\otimes j} .
\end{equation*}
The right-hand side is a $j/2$-differential, because by definition, the supergroup $\SWitt$ preserves the superconformal structure on the formal deleted superdisk, \emph{i.e}., the formal distribution generated by $D_ζ = \pp{ζ} + ζ \pp{z}$. We may think of this distribution as the space of $(-1/2)$-differentials, and therefore, $\SWitt$ maps a $j/2$-differential to a $j/2$-differential.

\begin{rem}
\label{auts}
The action \eqref{switt H action} is an action by automorphisms of the $\CC$-algebra $H_\bullet$.
\end{rem}

The super Heisenberg group inherits a natural action by automorphisms of $H_\bullet$, since $Γ_\bullet$ consists of the invertible elements of $H_\bullet$, see Definition \eqref{Gamma}. In particular, the action is
\begin{align}\label{switt on gamma action}
    \SWitt\times Γ_{j/2}\to Γ_{j/2},
\end{align}
defined by the change-of-variable action on $j/2$-differentials, see \eqref{switt H action}.
 The resulting $j/2$-differential belongs to $Γ_{j/2} (S)$ because a nowhere vanishing section maps to a nowhere vanishing section under an algebra automorphism.

\begin{lemma}\label{equivariant gamma action}
The action \eqref{gamma H action} of the supergroup $Γ_\bullet$ on the super vector space $H_\bullet$,
\begin{equation*}
Γ_\bullet \times  H_\bullet \to H_\bullet,
\end{equation*}
is $\SWitt$-equivariant. Here $\SWitt$ acts diagonally on $Γ_\bullet \times  H_\bullet$, combining the actions \eqref{switt H action} and \eqref{switt on gamma action}.
\end{lemma}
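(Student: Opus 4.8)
The plan is to unwind the definitions and recognize that the asserted $\SWitt$-equivariance of the multiplication map is nothing more than the multiplicativity of the automorphisms making up $\SWitt$. Fix a superscheme $S$, an element $\phi \in \SWitt(S)$, and elements $g \in Γ_{i/2}(S)$ and $f = f(z|ζ)[dz|dζ]^{\otimes j} \in \hat{(H_{j/2})}_S$. The diagonal $\SWitt$-action sends the pair $(g,f)$ to $(\phi(g), \phi(f))$, where $\phi(g)$ is computed via the change-of-variable action \eqref{switt on gamma action} and $\phi(f)$ via \eqref{switt H action}. Since the action map \eqref{gamma H action} is the natural multiplication \eqref{H multiplication}, equivariance amounts exactly to the identity
\begin{equation*}
\phi(g \cdot f) = \phi(g) \cdot \phi(f)
\end{equation*}
in $\hat{(H_{(i+j)/2})}_S$.

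The first thing I would record is that the action \eqref{switt on gamma action} of $\SWitt$ on $Γ_{i/2}$ is \emph{by definition} the restriction of the action \eqref{switt H action} on $H_{i/2}$ to the subfunctor of invertible elements; thus $\phi(g)$ has the same meaning whether $g$ is viewed inside $Γ_{i/2}(S)$ or inside $\hat{(H_{i/2})}_S$. With the two actions thereby aligned, the displayed identity is precisely the assertion that $\phi$ respects the multiplication on $H_\bullet$. By \Cref{auts}, the action \eqref{switt H action} is by automorphisms of the $\CC$-algebra $H_\bullet$, so each $\phi$ is in particular multiplicative, and the identity follows at once.

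The verification is therefore immediate once the definitions are matched, and there is no serious obstacle. The only point that demands any care is that the multiplication \eqref{H multiplication} carries a Koszul sign, so one might worry whether $\phi$ is compatible with these signs; this is, however, automatic, because \Cref{auts} asserts that $\phi$ is an automorphism of $H_\bullet$ as a $\tfrac{1}{2}\ZZ$-graded supercommutative $\CC$-algebra, and multiplicativity for such a structure already incorporates the sign rule. Hence $\phi(g \cdot f) = \phi(g)\cdot\phi(f)$ holds with all signs accounted for, which is exactly the commutativity of the equivariance square and completes the argument.
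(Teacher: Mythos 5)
Your proof is correct and follows essentially the same route as the paper's: both reduce the equivariance to the multiplicativity of the $\SWitt$-action via \Cref{auts}, together with the observation that the $Γ_\bullet$-action on $H_\bullet$ and the $\SWitt$-action on $Γ_\bullet$ are both restrictions of the corresponding structures on $H_\bullet$. Your explicit pointwise formulation over $S$-points and the remark about Koszul signs merely spell out what the paper states more compactly.
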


\begin{proof}
The fact that $\SWitt$ acts on $H_\bullet$ via \eqref{switt H action} by automorphisms of the $\CC$-algebra $H_\bullet$, see \cref{auts}, means, in particular, that the multiplication map
\begin{equation*}
H_\bullet \otimes H_\bullet \to H_\bullet 
\end{equation*}
is $\SWitt$-equivariant. The action
\begin{equation*}
Γ_\bullet \times H_\bullet \to H_\bullet
\end{equation*}
was the restriction of the multiplication from $ H_\bullet$ to $Γ_\bullet \subset H_\bullet$, whereas the action \eqref{switt on gamma action} of $\SWitt$ was also obtained by restricting \eqref{switt H action} from $H_\bullet$ to $Γ_\bullet$. This implies the lemma.
\end{proof}

Since the supergroup $\SWitt$ acts on $H_\bullet$ by algebra automorphisms, it automatically acts on the supergroup $Γ_\bullet$ of invertible elements of $H_\bullet$. Using this action, we define the semidirect product of these groups:
\begin{equation*}
Γ_\bullet \rtimes \SWitt.
\end{equation*}

\begin{prop}\label{semidirect Lie algebra}
    The semidirect product $\mathfrak{h} \rtimes\switt$ of the super Heisenberg and super Witt algebra Lie algebras is the Lie algebra of the formal group superscheme $Γ^0_{0/2} \rtimes \SWitt$.
\end{prop}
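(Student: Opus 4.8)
The plan is to read the Lie algebra of $Γ^0_{0/2} \rtimes \SWitt$ off its functor of points on dual numbers, separating the computation into the underlying super vector space and the three blocks of the bracket, and then to identify the single nontrivial (mixed) block with the action that defines the semidirect product of Lie algebras.

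First I would note that, as a formal superscheme, $Γ^0_{0/2} \rtimes \SWitt$ is simply the product $Γ^0_{0/2} \times \SWitt$; only the group law is twisted. Since the Lie algebra, as a super vector space, is the tangent space at the identity, and tangent spaces of a product add, \Cref{super heisenberg Lie algebra} and \Cref{switt Lie algebra} identify it with $\mathfrak{h} \oplus \switt$. For the diagonal blocks of the bracket I would argue by functoriality: the inclusions $Γ^0_{0/2} \hookrightarrow Γ^0_{0/2} \rtimes \SWitt$ and $\SWitt \hookrightarrow Γ^0_{0/2} \rtimes \SWitt$ are homomorphisms of formal supergroups, hence induce Lie algebra homomorphisms. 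Thus the bracket restricted to $\mathfrak{h}$ is the bracket of $\mathfrak{h}$, which vanishes because $Γ^0_{0/2}$ is abelian (\Cref{rep-Gamma}), and the bracket restricted to $\switt$ is the bracket of $\switt$. This reduces the proposition to computing the mixed bracket $[\switt,\mathfrak{h}]$.

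For the mixed block I would evaluate, inside $Γ^0_{0/2} \rtimes \SWitt$, the group commutator of an $\mathfrak{h}$-tangent vector $1 + \epsilon a$ (with $a \in \mathfrak{h} = H_{0/2}$ and $\epsilon$ a dual number of the same parity as $a$) against a $\switt$-tangent vector $\mathrm{id} + \delta X$, taken in the even or odd form exactly as in the proof of \Cref{switt Lie algebra}. Using the semidirect product law $(g_1,\phi_1)(g_2,\phi_2) = (g_1\,(\phi_1\cdot g_2),\,\phi_1\phi_2)$, the commutator collapses to leading order into a pure $Γ^0_{0/2}$-factor $1 - \epsilon\delta\,(X\cdot a)$, where $X\cdot a$ is the infinitesimal action of $X$ on $a$. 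By construction this infinitesimal action is the differential at the identity of the change-of-variable action \eqref{switt on gamma action}, which in the $j=0$ case is the Lie derivative of the function $a$ along the superconformal vector field $X$, i.e.\ the derivation of $H_\bullet$ recorded after \Cref{switt Lie algebra}. Hence $[\switt,\mathfrak{h}]$ is exactly the action-by-derivations cross relation defining $\mathfrak{h}\rtimes\switt$, completing the identification.

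The main obstacle will be this last step: carrying out the commutator in the formal, super, functor-of-points framework while keeping track of the Koszul signs that arise when $X$ or $a$ is odd, and verifying that the differential of \eqref{switt on gamma action} reproduces the Lie-derivative action on the nose rather than up to a sign or scalar. The fact that $\SWitt$ acts on $H_\bullet$ by $\CC$-algebra automorphisms (\Cref{auts}) guarantees that the infinitesimal action is indeed by derivations, so that $\mathfrak{h}\rtimes\switt$ is well defined; once the commutator is computed the semidirect structure on $\mathfrak{h}\oplus\switt$ is then forced by the general semidirect-product commutator formula.
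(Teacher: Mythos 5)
Your proposal is correct and follows essentially the same route as the paper: both identify the two factors via \Cref{super heisenberg Lie algebra} and \Cref{switt Lie algebra}, and then identify the mixed bracket by observing that the group action of $\SWitt$ on $Γ^0_{0/2}$ (the change-of-variable action \eqref{switt on gamma action}) differentiates to the derivation action of $\switt$ on $H_{0/2}$. The paper states this last step in one sentence, whereas you spell out the underlying commutator computation in the functor-of-points framework; that is a fleshing-out of the same argument, not a different one.
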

\begin{proof}
    In \cref{super heisenberg Lie algebra} and \cref{switt Lie algebra}, we showed respectively that the Lie algebra of $Γ^0_{0/2}$ is the abelian superalgebra $\mathfrak{h}$ and that the Lie algebra of $\SWitt$ is the usual $\mathfrak{switt}$.

    It suffices to notice that the action of $\SWitt$ by automorphisms on $Γ^0_{0/2}$ differentiates into the action of $\switt$ by derivatives on $H_{0/2}$.
\end{proof}

\color{black}
\section{The super Sato Grassmannian}

We define the super Sato Grassmannian, for which more details can be found in \cite{Maxwell.2022.tsMfaSG}, before describing the action of the formal supergroups $Γ$ and $\SWitt$ and the duality map.

Given a morphism $T \to S$ of superschemes, for any $\mathcal{O}_S$-submodule $V\subset (\hat{H_{j/2}})_S$, we use the notation below to denote base change via a morphism $T \to S$ and formal completion:
\begin{align*}
    V_T\coeq V\otimes_{\mathcal{O}_S} \mathcal{O}_T, && \hat{V}_T\coeq V \mathrel{\hat{\otimes}_{\mathcal{O}_S}}
    \mathcal{O}_T \coeq \varprojlim_n (V_T/(V_T \cap z^n (H^+_{j/2})_T ).
\end{align*}


\begin{definition}
Define a subspace $K$ of $H_{j/2}$ to be \emph{compact} if it is commensurable with $H^+_{j/2}$. Subspaces $K$ and $H^+_{j/2}$ are \emph{commensurable} when 
\begin{align*}
    \left(H^+_{j/2} + K\right)\Big/\left(H^+_{j/2} \cap K\right)
\end{align*}
is finite dimensional.

\end{definition}

\begin{definition}[super Sato Grassmannian $\Gr(H_\bullet)$]
\label{sGrassmannian}
    Define a functor
    \begin{align*}
        \Gr(H_{j/2})\co \operatorname{\mathbf{SSch}}_\CC & \to \operatorname{\mathbf{Set}}\\
        S&\mapsto \left\{\text{discrete quasi-coherent $\mathcal{O}_S$-submodules $L\subset (\hat{H_{j/2}})_S$}\right\},
    \end{align*} 
    where $L\subset (\hat{H_{j/2}})_S$ is \emph{discrete} if 
    for every $s\in S$ there exists a neighborhood $U$ of $s$ and a compact $K\subset H_{j/2}$ such that the natural map $L_U\oplus \hat{K}_U\to (\hat{H_{j/2}})_U$ is an isomorphism.
\end{definition}

The functor $\Gr(H_{j/2})$ is representable by an infinite dimensional superscheme, which we call the super Sato Grassmannian \cite{Maxwell.2022.tsMfaSG}. In what follows, we use the notation $\Gr_\bullet\coeq \Gr(H_\bullet)$ and $\Gr_{j/2}\coeq \Gr(H_{j/2})$.

One can observe, analogously to the classical case \cite{AlvarezVazquez.MunozPorras.PlazaMartin.1998.tafoseoabf}, the following facts:
\begin{itemize}
    
\item
An $\OO_S$-submodule $L\subset (\hat{H_{j/2}})_S$ is discrete if and only if for each $s \in S$ there exists a neighborhood $U$ of $s$ and an $N \in \ZZ$ such that $L_U\cap z^N \hat{(H^+_{j/2})}_U$ is locally free of finite type and $L_U + z^N \hat{(H^+_{j/2})}_U = (\hat{H_{j/2}})_U$;

\item The above property automatically holds for any $M \le N$;

\item
For each $L \in \Gr_{j/2}(S)$, the complex 
\begin{equation*}
0 \to L \oplus (\hat{H^+_{j/2}})_S\to (\hat{H_{j/2}})_S \to 0
\end{equation*}
is \emph{perfect}, i.e., it is
locally quasi-isomorphic to a complex of finite free
$\OO_S$-modules.
\end{itemize}

The super Sato Grassmannian $\Gr_{j/2}$ decomposes into a disjoint union of connected components corresponding to the virtual dimension of the discrete subspaces making $\Gr_{j/2}$ up:
\begin{equation}
\label{Gr-component}
\Gr_{j/2} = \coprod_{m|n \in \ZZ \times \ZZ} \Gr_{j/2}(m|n),
\end{equation}
where $\Gr_{j/2}(m|n) = \Gr (m|n, H_{j/2})$ is the superscheme based on the connected component $\Gr_{j/2}(m|n)_\red$ of the underlying space $(\Gr_{j/2})_\red$
\begin{equation*}
\Gr_{j/2}(m|n)_\red \coeq
\left\{\text{discrete subspaces $L\subset H_{j/2}$ of $\vdim L = m|n$}\right\},
\end{equation*}
where the \emph{virtual dimension} is defined as
\begin{equation*}
\vdim L \coeq \dim (L \cap H_{j/2}^+) - \dim (H_{j/2}/(L + H_{j/2}^+)).
\end{equation*}
This is, by definition, the \emph{Fredholm index} of the operator $L
\oplus H_{j/2}^+ \to H_{j/2}$.

\section{Actions on the super Sato Grassmannian}

In this section, we show that the actions of \Cref{actions} on $H_\bullet$ preserve the discreteness property of a subspace, and therefore induce actions on the super Sato Grassmannian.

\begin{prop}\label{gamma action on Gr}
   The action \eqref{gamma H action} of the formal super Heisenberg group $Γ_\bullet$ on $H_\bullet$ induces an action on the super Sato Grassmannian $\Gr_\bullet$, additive in the degree of the differentials:
   \begin{align*}
        Γ_{i/2}\times \Gr_{j/2}\to \Gr_{(i+j)/2}.
    \end{align*}
\end{prop}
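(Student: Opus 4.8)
The plan is to realize the action as multiplication by the invertible element $g$, to check that this multiplication is an isomorphism of the ambient formal modules which only shifts the standard filtration by a bounded amount, and to conclude that it carries discrete submodules to discrete submodules; the group axioms are then formal. For Step~1 (the multiplication isomorphism), note that an $S$-point $g\in Γ_{i/2}(S)$ is by \Cref{Gamma} invertible in $H_\bullet(S)$, with inverse $g^{-1}\in Γ_{-i/2}(S)$. Multiplication by $g$ therefore defines an $\OO_S$-linear map
\[
\mu_g\co \hat{(H_{j/2})}_S\to \hat{(H_{(i+j)/2})}_S ,
\]
with two-sided inverse $\mu_{g^{-1}}$. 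It is natural in $S$ and compatible with base change $T\to S$, since it is given by the fixed Laurent series $g$, and it is continuous for the $z$-adic topology because the $z$-degrees of $g$ are bounded below; hence $\mu_g$ is compatible with the completions, and $gL\coeq \mu_g(L)$ is a quasi-coherent $\OO_S$-submodule.

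For Step~2 (bounded shift of the standard filtration), write $g=\sum_{n\ge -M}(a_n+\alpha_n ζ)z^n\,[dz|dζ]^{\otimes i}$, so that every monomial of $g$ has $z$-degree $\ge -M$, and likewise $g^{-1}$ has all $z$-degrees $\ge -M'$ for some $M'$. Then for every integer $m$,
\[
\mu_g\bigl(z^m\hat{(H^+_{j/2})}_S\bigr)\subseteq z^{\,m-M}\hat{(H^+_{(i+j)/2})}_S ,
\]
and, applying the same estimate to $g^{-1}$ and then multiplying by $g$,
\[
z^{\,m+M'}\hat{(H^+_{(i+j)/2})}_S\subseteq \mu_g\bigl(z^m\hat{(H^+_{j/2})}_S\bigr).
\]
Thus $\mu_g$ carries the standard filtration of $H_{j/2}$ into that of $H_{(i+j)/2}$ up to a bounded shift in both directions. (Over $S_\red$ the nilpotent even coefficients $a_n$, $n<N$, and the odd $\alpha_n$ all vanish, so $g$ reduces to $a_N z^N$ times a unit power series and $\mu_g$ is exactly the shift $z^m\hat{(H^+)}\mapsto z^{m+N}\hat{(H^+)}$.)

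For Step~3 (preservation of discreteness), let $L\in\Gr_{j/2}(S)$ and $s\in S$. By the characterization recorded after \Cref{sGrassmannian}, there is a neighborhood $U$ and an $N$ with $L_U\cap z^N\hat{(H^+_{j/2})}_U$ locally free of finite type and $L_U+z^N\hat{(H^+_{j/2})}_U=\hat{(H_{j/2})}_U$. Applying $\mu_g$, the module $gz^N\hat{(H^+_{j/2})}_U$ meets $(gL)_U$ in $\mu_g\bigl(L_U\cap z^N\hat{(H^+_{j/2})}_U\bigr)$, again locally free of finite type, and $(gL)_U+gz^N\hat{(H^+_{j/2})}_U=\hat{(H_{(i+j)/2})}_U$. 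Since by Step~2 we have $z^{M_+}\hat{(H^+_{(i+j)/2})}_U\subseteq gz^N\hat{(H^+_{j/2})}_U\subseteq z^{M_-}\hat{(H^+_{(i+j)/2})}_U$ and both conditions are monotone in the chosen compact subspace, the pair of conditions transfers to a standard $z^M\hat{(H^+_{(i+j)/2})}_U$, so $gL\in\Gr_{(i+j)/2}(S)$. Finally (Step~4), associativity of the multiplication on $H_\bullet$ gives $g\cdot(g'\cdot L)=(gg')\cdot L$ with $gg'\in Γ_{(i+i')/2}(S)$, the unit $1\in Γ^0_{0/2}(S)$ acts as the identity, and naturality was noted in Step~1; this is exactly the asserted action $Γ_{i/2}\times\Gr_{j/2}\to\Gr_{(i+j)/2}$, additive in degree.

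The main obstacle is Step~3: ensuring that the lower-order nilpotent and odd terms of $g$ do not destroy the ``locally free of finite type'' property over a possibly non-reduced base $S$, and that the monotonicity of the discreteness conditions, stated for the standard subspaces, applies to the sandwiched module $gz^N\hat{(H^+_{j/2})}$. I would handle this by reducing modulo the nilpotents of $\OO_S$—where, by Step~2, $\mu_g$ is literally a filtration shift and discreteness is transparent—and then upgrading to $S$ using that $\mu_g$ is an isomorphism together with the openness of the finiteness and surjectivity conditions encoded in the perfect complex $0\to L\oplus \hat{(H^+_{j/2})}_S\to \hat{(H_{j/2})}_S\to 0$ recorded after \Cref{sGrassmannian}.
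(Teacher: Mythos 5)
Your Steps 1, 2 and 4 are sound and follow the same route as the paper: multiplication by $g$ is an automorphism of the ambient module, it carries the discreteness data $\bigl(L_U,\,z^N\hat{(H^+_{j/2})}_U\bigr)$ to the pair $\bigl((gL)_U,\,gz^N\hat{(H^+_{j/2})}_U\bigr)$, and the twisted compact $gz^N\hat{(H^+_{j/2})}_U$ is sandwiched between standard ones. The gap is the sentence in Step 3 asserting that ``both conditions are monotone in the chosen compact subspace.'' Only the sum condition is monotone under enlarging the compact. The intersection condition is not: knowing that $(gL)_U\cap gz^N\hat{(H^+_{j/2})}_U$ is locally free of finite type does not formally give the same for $(gL)_U\cap z^M\hat{(H^+_{(i+j)/2})}_U$ when $gz^N\hat{(H^+_{j/2})}_U\subseteq z^M\hat{(H^+_{(i+j)/2})}_U$. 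The paper's bullet after \Cref{sGrassmannian}, that the property persists when the index decreases, compares two \emph{standard} compacts, whose quotient is visibly free on monomials; you are applying it to the non-standard module $gz^N\hat{(H^+_{j/2})}_U$, whose lower-order coefficients involve nilpotent and odd elements of $R$. What actually has to be proved is that the quotient $z^M\hat{(H^+_{(i+j)/2})}_U\big/ gz^N\hat{(H^+_{j/2})}_U$ is locally free of finite type; granting that, the snake lemma (this is the paper's argument) exhibits $(gL)_U\cap z^M\hat{(H^+_{(i+j)/2})}_U$ as an extension of this quotient by $g\bigl(L_U\cap z^N\hat{(H^+_{j/2})}_U\bigr)$, hence locally free of finite type.

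Your proposed repair in the closing paragraph does not fill this hole. Local freeness does not lift from $S_\red$ to $S$: over $R=\CC[\epsilon]/(\epsilon^2)$ the module $R/\epsilon R$ is free over $R_\red$ but is not locally free over $R$, and the nilpotent lower-order terms of $g$ threaten exactly this behaviour in the quotient above. Likewise, ``openness of the finiteness and surjectivity conditions'' is not a citable fact that performs the lifting (establishing such openness is precisely the problem at hand), and the perfect complex you invoke is the one attached to $L$, whose discreteness is the hypothesis, so it cannot certify the modified compact. The missing argument is the paper's Nakayama step: at a point $s\in U$ the value $g(s)$ equals $a_K z^K$ (with $a_K\in\CC^*$) times a power series with invertible constant term, so the fiber of the quotient at $s$ has basis the monomials $z^M,\dots,z^{N+K-1},\ \zeta z^M,\dots,\zeta z^{N+K-1}$; Nakayama's lemma then shows these elements generate the quotient on a neighborhood of $s$, and they remain linearly independent there, so the quotient is free of finite type near $s$. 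Once this is inserted, your Step 3 closes and the rest of your proof is correct.
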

\begin{proof}
  We need to define a morphism $Γ_\bullet \times  \Gr_\bullet  \to \Gr_\bullet$. We can define it as a natural transformation between functors of points.

      Since an $S$-point $g$ of the super Heisenberg group is an invertible element of $
    (H_{i/2}\mathrel{\hat{\otimes}} R)_{\bar{i}}$ where $R=Γ(S,\mathcal{O}_S)$, then its action by multiplication on the $\mathcal{O}_S$-module $\hat{(H_\bullet)}_S$ is an automorphism.

      Let $L\subset \hat{(H_\bullet)}_S$ represent an $S$-point of $\Gr_\bullet$, and so there exists a small enough neighborhood $U$ on $S$ and a compact $z^N H_{j/2}^+$ such that $L_U\cap z^N\hat{(H_{j/2}^+)}_U$ is locally free of finite type and $L_U\oplus z^N\hat{(H_{j/2}^+)}_U = \hat{(H_{j/2})}_U$. Then $(g L_U) \cap (g z^N \hat{(H^+_{j/2})}_U)$ is locally free of finite type and $(gL_U) + (gz^N \hat{(H^+_{j/2})}_U)$ is still the whole $(\hat{H_{j/2}})_U$. Choose $M \in \ZZ$ such that $g z^N \in z^M \hat{(H^+_{j/2})}_U$, so then $g z^N \hat{(H^+_{j/2})}_U
      \subset z^M \hat{(H^+_{j/2})}_U$. And further $(gL_U) + (z^M \hat{(H^+_{j/2})}_U) = \hat{({H_{j/2}})}_U$. We claim that
      $(g L_U) \cap (z^M \hat{(H^+_{j/2})}_U)$ is locally free of finite type. This will imply that $gL$ represents an $S$-point of $\Gr_{j/2}$.

To justify the claim, first consider the morphism of complexes of sheaves below.
      \begin{equation*}
          \begin{tikzcd}
              0\arrow{r}& gz^N \hat{(H^+_{j/2})}_U \arrow{r}\arrow{d} & z^M \hat{(H^+_{j/2})}_U\arrow{r}\arrow{d} & z^M \hat{(H^+_{j/2})}_U\big/ g z^N \hat{(H^+_{j/2})}_U \arrow{r}\arrow{d} & 0\\
              0\arrow{r}& \hat{(H_{j/2})}_U/L_U \arrow{r} & \hat{(H_{j/2})}_U/L_U\arrow{r} & 0 \arrow{r} & 0
          \end{tikzcd}
      \end{equation*}

      Applying the snake lemma, the resulting LES is in fact a SES in the three kernels. Therefore, to show the claim, it suffices to show that the quotient $z^M \hat{(H^+_{j/2})}_U\big/ g z^N \hat{(H^+_{j/2})}_U$ is locally free of finite type.
      
      For each point $s$ of $U$, the value of $g$ at $s$ will be represented by a Laurent series
      \begin{equation*}
      g(s) = \left(\sum a_{K} z^K + a_{K+1} z^{K+1} + \dots \right) [dz | dζ]^i
      \end{equation*}
        with $K \in \ZZ$, $a_K \in \CC^*$, and $a_{K+1}, a_{K+2}, \ldots \in \CC$. Therefore, $g(s) z^N$ has leading order $N+K$.
By Nakayama's lemma, there is a neighborhood $V$ of $s$ over which $$\big\{z^M, \ldots, z^{(N+K-1)}\mathop{\big|}ζ z^M, \ldots, ζ z^{(N+K-1)}\big\}$$ generate the quotient $z^M \hat{(H^+_{j/2})}_V\big/ g z^N \hat{(H^+_{j/2})}_V$. Since these elements represent a basis of the quotient space at the point $s$, they will also be linearly independent in the neighborhood. Thus, the quotient will be free of finite type over this neighborhood.
\end{proof}

\begin{prop}\label{SWitt action on Gr}
   The action \eqref{switt H action} of the formal super Witt group $\SWitt$ on $H_\bullet$ induces an action on the super Sato Grassmannian $\Gr_\bullet$, preserving the degree of the differentials:
   \begin{align*}
    \SWitt\times \Gr_{j/2}\to \Gr_{j/2}.
\end{align*}
\end{prop}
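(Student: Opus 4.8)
The plan is to mirror the proof of \Cref{gamma action on Gr}, replacing multiplication by an invertible element with the change-of-variables automorphism $\phi$. I would define the action $\SWitt \times \Gr_{j/2} \to \Gr_{j/2}$ as a natural transformation of functors of points: an $S$-point $\phi \in \SWitt(S)$ sends $L \in \Gr_{j/2}(S)$ to $\phi(L)$, using the automorphism \eqref{switt H action} of $\hat{(H_{j/2})}_S$. Since $\phi$ is a pushforward of $j/2$-differentials, it preserves the degree $j$, so the target is again $\Gr_{j/2}$ (this is the source of the ``preserving the degree'' clause, in contrast to the Heisenberg action, which shifts degree). The only nontrivial point is that $\phi(L)$ is again discrete.

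The key observation is that the continuity built into \Cref{SWitt} makes $\phi(z^N \hat{(H^+_{j/2})}_U)$ a compact subspace. Indeed, continuity of $\phi$ gives, for each $M$, an $N$ with $\phi(z^N \hat{(H^+_{j/2})}_U) \subset z^M \hat{(H^+_{j/2})}_U$, while continuity of the inverse $\phi^{-1}$ supplies the reverse sandwiching by another standard compact subspace; hence $\phi(z^N \hat{(H^+_{j/2})}_U)$ is commensurable with $H^+_{j/2}$. Now I would start from a local splitting witnessing discreteness of $L$: over a small enough $U$, the module $L_U \cap z^N \hat{(H^+_{j/2})}_U$ is locally free of finite type and $L_U \oplus z^N \hat{(H^+_{j/2})}_U = \hat{(H_{j/2})}_U$. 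Applying the \emph{bijective} automorphism $\phi$ gives at once $\phi(L_U) + \phi(z^N \hat{(H^+_{j/2})}_U) = \hat{(H_{j/2})}_U$ and $\phi(L_U) \cap \phi(z^N \hat{(H^+_{j/2})}_U) = \phi\bigl(L_U \cap z^N \hat{(H^+_{j/2})}_U\bigr)$, the latter locally free of finite type.

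Next I would descend from the nonstandard compact $\phi(z^N \hat{(H^+_{j/2})}_U)$ to a standard one. Choosing $M$ as above so that $\phi(z^N \hat{(H^+_{j/2})}_U) \subset z^M \hat{(H^+_{j/2})}_U$, the surjectivity upgrades to $\phi(L_U) + z^M \hat{(H^+_{j/2})}_U = \hat{(H_{j/2})}_U$, and it remains to see that $\phi(L_U) \cap z^M \hat{(H^+_{j/2})}_U$ is locally free of finite type. Exactly as in \Cref{gamma action on Gr}, I would set up the two-row morphism of short exact sequences relating the inclusion $\phi(z^N \hat{(H^+_{j/2})}_U) \hookrightarrow z^M \hat{(H^+_{j/2})}_U$ to $\hat{(H_{j/2})}_U / L_U$, and apply the snake lemma; this reduces the claim to showing that the quotient $z^M \hat{(H^+_{j/2})}_U \big/ \phi(z^N \hat{(H^+_{j/2})}_U)$ is locally free of finite type.

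I expect this last quotient to be the main obstacle, since $\phi$ acts by a genuine superconformal change of variables rather than by multiplication, so the leading-order bookkeeping is more delicate than in the Heisenberg case. The remedy is to compute pointwise on $U_\red$: by \eqref{red}, at a reduced point $\phi$ becomes $\hat{z} = u_\red(z)$, $\hat{ζ} = ζ\, v_\red(z)$, with $u_\red(z) = c_1 z + c_2 z^2 + \cdots$ having $c_1$ invertible and $v_\red$ having invertible constant term. Consequently $\phi$ sends each $z^k$ (and its $ζ$-counterpart, through the Berezinian transformation factor) to a series of the same leading order up to a fixed shift $K$ independent of $k$, so $\phi(z^N \hat{(H^+_{j/2})})$ has the same leading-order profile as $z^{N} \hat{(H^+_{j/2})}$ at every point. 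Nakayama's lemma then shows that, shrinking $U$ to a neighborhood $V$ of a given reduced point, the finite set $\bigl\{ z^M, \dots, z^{N+K-1} \mid ζ z^M, \dots, ζ z^{N+K-1} \bigr\}$ generates $z^M \hat{(H^+_{j/2})}_V \big/ \phi(z^N \hat{(H^+_{j/2})}_V)$, and since these represent a basis at the point, linear independence propagates to the neighborhood, giving local freeness of finite type. This shows $\phi(L)$ is discrete, and hence the action of \eqref{switt H action} descends to $\Gr_\bullet$.
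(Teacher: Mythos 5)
Your proposal is correct and follows essentially the same route as the paper's proof: transport the local splitting of $L$ through the automorphism $\phi$, use continuity to fit $\phi(z^N \hat{(H^+_{j/2})}_U)$ inside a standard compact $z^M \hat{(H^+_{j/2})}_U$, and then invoke the snake-lemma/Nakayama argument of \Cref{gamma action on Gr} for the quotient. The only differences are cosmetic: you spell out the pointwise leading-order computation on $U_\red$ (where in fact the shift $K$ is $0$, since $u_\red(z) = c_1 z + \cdots$ and the Berezinian factor $v_\red$ both have invertible leading coefficients) which the paper simply cites as ``as in the proof of the previous proposition,'' and you add an unneeded but harmless commensurability remark.
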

\begin{proof}
An $S$-point $\phi$ of the super Witt group is an automorphism of the graded $R = H^0(S, \OO_S)$-algebra $ H_\bullet \hat{\otimes} R$ and, in particular, of the $R$-module $ H_{j/2} \hat{\otimes} R$. It induces an automorphism of the trivial vector bundle $\hat{(H_{j/2})}_S$. If $L \subset \hat{(H_{j/2})}_S$ represents an $S$-point of $\Gr_{j/2}$, then in a small enough neighborhood $U$ on $S$, there is an $N \in \ZZ$ such that $L_U\cap z^N \hat{(H^+_{j/2})}_U$ is locally free of finite type and $L_U + z^N \hat{(H^+_{j/2})}_U = (\hat{H_{j/2}})_U$. Then $\phi(L_U) \cap \phi(z^N \hat{(H^+_{j/2})}_U)$ will also be locally free of finite type and $\phi(L_U) + \phi(z^N \hat{(H^+_{j/2})}_U)$ will still be the whole $(\hat{H_{j/2}})_U$. By the continuity of $\phi$, we can find $M \in \ZZ$ such that $\phi(z^N \hat{(H^+_{j/2})}_U) \subset z^M \hat{(H^+_{j/2})}_U$. The fact that the quotient is locally free of finite rank may be shown as in the proof of the previous proposition. Therefore, $\phi L$ will also represent an $S$-point of $\Gr_{j/2}$.
Varying $S$, we get a natural transformation:
\begin{equation*}
(\SWitt \times \Gr_{j/2})(S) \to \Gr_{j/2}(S),
\end{equation*}
which gives the required action of $\SWitt$ on $\Gr_{j/2}$.
\end{proof}

The previous two propositions combine into an action of the semidirect product $Γ_\bullet \rtimes \SWitt$ on the super Sato Grassmannian $\Gr_\bullet$, because the actions on the Grassmannian are induced by the actions on $H_\bullet$ and the action of $Γ_\bullet$ on $H_\bullet$ is $\SWitt$-equivariant by \Cref{equivariant gamma action}.
\begin{cor}\label{semidirect action on Gr}
   The actions of \Cref{gamma action on Gr} and \Cref{SWitt action on Gr} induce an action of the semidirect product $Γ_\bullet \rtimes \SWitt$ on the super Sato Grassmannian $\Gr_\bullet$.
\end{cor}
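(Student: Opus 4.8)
The plan is to build the combined action out of the two component actions by the formula
\[
(g, \phi) \cdot L \coeq g \cdot (\phi \cdot L),
\]
where $g \cdot (-)$ denotes the super Heisenberg action of \Cref{gamma action on Gr} and $\phi \cdot (-)$ the super Witt action of \Cref{SWitt action on Gr}, for $(g,\phi)$ an $S$-point of $Γ_\bullet \rtimes \SWitt$ and $L$ an $S$-point of $\Gr_\bullet$. First I would note that the right-hand side lies in $\Gr_\bullet(S)$: applying $\phi$ first yields a discrete submodule by \Cref{SWitt action on Gr}, and then multiplying by $g$ yields a discrete submodule by \Cref{gamma action on Gr}. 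Since both component actions are natural transformations of functors of points, so is their composite, and this defines the desired morphism $(Γ_\bullet \rtimes \SWitt) \times \Gr_\bullet \to \Gr_\bullet$ of formal superschemes.

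Next I would verify the two axioms of a group action. That the identity $(1, \id)$ acts trivially is immediate from the definition. For compatibility with the group law I would recall that in the semidirect product the multiplication reads
\[
(g_1, \phi_1)(g_2, \phi_2) = \big(g_1 \cdot \phi_1(g_2),\; \phi_1 \phi_2\big),
\]
where $\phi_1(g_2)$ is the image of $g_2$ under the action of $\SWitt$ on $Γ_\bullet$ by algebra automorphisms. The task is then to match
\[
(g_1, \phi_1) \cdot \big((g_2, \phi_2) \cdot L\big) = g_1 \cdot \Big(\phi_1 \cdot \big(g_2 \cdot (\phi_2 \cdot L)\big)\Big)
\]
with
\[
\big((g_1, \phi_1)(g_2, \phi_2)\big) \cdot L = \big(g_1 \cdot \phi_1(g_2)\big) \cdot \big((\phi_1 \phi_2) \cdot L\big).
\]

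The pivotal step is to invoke the $\SWitt$-equivariance of the Heisenberg action proved in \Cref{equivariant gamma action}: taking $M = \phi_2 \cdot L$, it supplies the identity $\phi_1 \cdot (g_2 \cdot M) = \phi_1(g_2) \cdot (\phi_1 \cdot M)$. Substituting this into the first expression, and using that the $\SWitt$-actions compose as $\phi_1 \cdot (\phi_2 \cdot L) = (\phi_1\phi_2) \cdot L$, collapses it to the second expression, which establishes the action axiom. I expect the only delicate point to be bookkeeping, namely keeping the semidirect product convention consistent and confirming that the equivariance of \Cref{equivariant gamma action}, stated for the action on $H_\bullet$, transfers to the induced actions on $\Gr_\bullet$. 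This transfer is automatic, since the Grassmannian actions are by construction the restrictions of the $H_\bullet$-actions to discrete submodules, so no geometric input beyond \Cref{equivariant gamma action} is needed.
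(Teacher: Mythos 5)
Your proposal is correct and follows essentially the same route as the paper, which likewise combines \Cref{gamma action on Gr} and \Cref{SWitt action on Gr} and invokes the $\SWitt$-equivariance of the Heisenberg action (\Cref{equivariant gamma action}) to get compatibility with the semidirect product multiplication. The only difference is one of detail: the paper states this in a single sentence, while you spell out the cocycle-free verification explicitly, including the observation that equivariance on $H_\bullet$ transfers automatically to the induced actions on $\Gr_\bullet$.
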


\section{The Berezinian line bundle}

We recall here the definition of the Berezinian line bundle on the super Sato Grassmannian, see \cite{Maxwell.2022.tsMfaSG}.
We will later describe actions on the Berezinian line bundles and naturally arrive at the construction of the formal Neveu-Schwarz group, following the ideas of \cite{MunozPorras.PlazaMartin.2001.agok} in the classical, Virasoro case.

Let $\mc{L}$ be the \emph{tautological sheaf} over $\Gr_{j/2}$. This is the universal $\OO_\Gr$-module corresponding to the identity $\Gr_{j/2}$-point $\id\co \Gr_{j/2} \to \Gr_{j/2}$ of $\Gr_{j/2}$.

\begin{definition}
\label{Ber-l-b}
    The \emph{Berezinian line bundle} $\Ber = \Ber_{j/2}$ on $\Gr_{j/2}$ is
    the Berezinian of the perfect complex defined by the
    addition morphism:
    \begin{align*}
    \Ber_{j/2} (\mc{L}) \coeq \Ber\Big(0 \to \mc{L}\oplus \hat{\left(H^+_{j/2}\right)}_\Gr \to \hat{(H_{j/2})}_\Gr \to 0 \Big).
    \end{align*}
\end{definition}

Note that for the classical Sato Grassmannian (and finite classical
Grassmannians) the determinant line bundle has no global sections,
while the dual determinant line bundle does have global sections;
however, for the super Sato Grassmannian (and finite super
Grassmannians \cite[Chapter 4 Section 3]{Manin.1988.gftacg}), neither the Berezinian nor the dual Berezinian line
bundles have global sections \cite[Section 3.1]{Bergvelt.Rabin.1999.stJasKe}.

Later, when considering group actions on the Berezinian line bundle, it becomes necessary to consider the Berezinian line bundles defined using the parity reversed perfect complex. Considering $\mathcal{L}$ over $\Gr_{j/2}(m|n)$, then the parity reversed perfect complex is canonically isomorphic to the dual of the usual Berezinian line bundle as:
\begin{align}
    \label{Ber of Pi complex}
    \Ber\Big(0 \to Π^j\mc{L}\oplus Π^jH^+_{j/2} \to Π^jH_{j/2} \to 0 \Big)
    =Π^{m-n}\Ber^{*}_{H_{j/2}^+} (\mc{L}).
\end{align}
This isomorphism is due to the fact that $\Ber Π V = Π^{m-n} (\Ber V)^{-1}$ for a super vector space $V$ of finite dimension $(m | n)$ or, more generally, a perfect complex of Euler characteristic $(m | n)$.

\color{black}

\section{Actions on the Berezinian line bundle} \label{action on Ber section}

Using the action of $\SWitt$ on $Γ_\bullet$, we defined in \Cref{actions} the semidirect product of these groups:
\begin{equation*}
Γ_\bullet \rtimes \SWitt.
\end{equation*}
We would like to define a central extension of the supergroup $Γ_\bullet \rtimes \SWitt$ by lifting its natural action from the Grassmannian $\Gr_\bullet$ to the Berezinian line bundle.

\begin{lemma}\label{Ber preserved}
    The Berezinian line bundle is preserved, up to parity reversal and inversion, under the action of the semidirect product group $Γ_\bullet \rtimes \SWitt$ on the super Sato Grassmannian. To be more precise, for $χ\in Γ_{i/2} \rtimes \SWitt$, we have
    \begin{equation}
        \label{chi-on-Ber}
        Π^{(m-n)(i+j)}  χ^*\Ber_{(i+j)/2}^{(-1)^{i+j}} = Π^{(m-n)j} \Ber_{j/2}^{(-1)^{j}}
    \end{equation}
    in $\Pic (\Gr_{j/2}(m|n))$.
\end{lemma}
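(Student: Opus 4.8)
The plan is to exploit the functoriality of the Berezinian under isomorphisms of the defining perfect complex, tracking parity with care, and to reduce a general element $χ \in Γ_{i/2} \rtimes \SWitt$ to the two generating types of actions. Since any such $χ$ factors as $χ = g \cdot \phi$ with $g \in Γ_{i/2}$ acting by multiplication \eqref{gamma H action} and $\phi \in \SWitt$ acting by change of variables \eqref{switt H action}, and since $\SWitt$ preserves the degree while $Γ_{i/2}$ raises it by $i$, it suffices to establish \eqref{chi-on-Ber} for each factor and to check that the two computations compose consistently. I would treat both factors by the same mechanism: an $S$-point of either group induces an isomorphism $T \co \hat{(H_{j/2})}_S \to \hat{(H_{(i+j)/2})}_S$ of parity $\bar i$ (with $i = 0$ for the $\SWitt$ factor), and $T$ carries the perfect complex $[\,L \oplus \hat{(H^+_{j/2})} \to \hat{(H_{j/2})}\,]$ defining $\Ber_{j/2}(L)$ onto $[\,TL \oplus T\hat{(H^+_{j/2})} \to \hat{(H_{(i+j)/2})}\,]$.

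Next I would compare this image with the complex $[\,χL \oplus \hat{(H^+_{(i+j)/2})} \to \hat{(H_{(i+j)/2})}\,]$ that computes $χ^*\Ber_{(i+j)/2}$ at $L$. The two differ only by replacing the compact subspace $T\hat{(H^+_{j/2})}$ with $\hat{(H^+_{(i+j)/2})}$; these are commensurable, so the discrepancy is the Berezinian of a finite-dimensional comparison super vector space. The essential observation is that this comparison space does not vary with $L$ over a fixed component $\Gr_{j/2}(m|n)$, so its Berezinian is a constant line and hence trivial in $\Pic(\Gr_{j/2}(m|n))$, possibly up to an overall parity which I would compute explicitly. Thus, in the Picard group, $χ^*\Ber_{(i+j)/2}$ equals the Berezinian of the $T$-transported complex.

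It then remains to read off the effect of the parity $\bar i$ of $T$. For $i$ even, $T$ is an even isomorphism, the transport is canonical, and \eqref{chi-on-Ber} reduces to $χ^*\Ber_{(i+j)/2} = \Ber_{j/2}$, consistent with $Π^{(m-n)(i+j)} = Π^{(m-n)j}$ and $(-1)^{i+j} = (-1)^j$. For $i$ odd, $T$ is an odd isomorphism, equivalent to precomposing an even transport with the parity change $Π$; applying the identity $\Ber Π V = Π^{m-n}(\Ber V)^{-1}$ recorded after \eqref{Ber of Pi complex} to the perfect complex of Euler characteristic $m|n$ then produces exactly the inversion $(-1)^{i+j} = -(-1)^j$ and the parity shift $Π^{(m-n)(i+j)} = Π^{m-n}\,Π^{(m-n)j}$ demanded by \eqref{chi-on-Ber}. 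For the $\SWitt$ factor ($i = 0$) the same scheme applies with $T = \phi$ even, and the reduced description \eqref{red} shows that $\phi$ preserves $\hat{(H^+_{j/2})}$ modulo nilpotents, so the comparison space is trivial on $S_\red$ and contributes nothing.

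The main obstacle I anticipate is the parity bookkeeping in the odd case: verifying that the constant finite-dimensional comparison correction really is trivial in $\Pic$ rather than contributing a stray factor of $Π$, and confirming that the parity reversal produced by the odd isomorphism $T$ matches the virtual-dimension-dependent exponent $(m-n)(i+j)$ after the degree shift $j \mapsto i+j$. Pinning down these parities, and checking that the $g$- and $\phi$-computations compose without an extra cocycle discrepancy in the semidirect product, is where the real work lies; once the exponents are matched the identity \eqref{chi-on-Ber} follows.
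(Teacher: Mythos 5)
Your proposal is correct and takes essentially the same approach as the paper's proof: transport the defining perfect complex by the parity-adjusted isomorphism induced by $\chi$ (the paper uses $\Pi^{j}\chi^{-1}\Pi^{i+j}$), observe that the discrepancy between the transported compact subspace and the standard one is a finite-dimensional Berezinian factor constant in $L$ (i.e., pulled back from the base $S$), hence trivial in the (relative) Picard group, and obtain the inversion and parity shift in the odd case from $\Ber \Pi V = \Pi^{m-n}(\Ber V)^{-1}$. Your preliminary factorization $\chi = g\phi$ is unnecessary — the paper treats a general $\chi$ uniformly, exactly as your own ``same mechanism'' observation suggests.
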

\begin{proof}
    More precisely, we want to show that, for any superscheme $S$, each $S$-point $χ \in (Γ_{i/2} \rtimes \SWitt)(S)$ of $Γ_{i/2} \rtimes \SWitt$, acting on the relative Grassmannian $\Gr_\bullet \times S \to S$, gives the equality \eqref{chi-on-Ber} of the corresponding $S$-points
    of the Picard superscheme $\Pic_\Gr$. The $S$-point of $\Pic_\Gr$ corresponding to the Berezinian line bundle is the class of $p_\Gr^* \Ber_{\bullet}$ in $\Pic_\Gr(S) = \Pic(\Gr_{\bullet} \times S)/p_S^* \Pic (S)$, where $p_\Gr$ and $p_S$ denote projection onto the respective factors of $\Gr_\bullet \times S$. We need to show that for each $j$,
    \begin{equation*}
    Π^{(m-n)(i+j)} χ^* p_\Gr^* \Ber_{(i+j)/2}^{(-1)^{i+j}} \cong Π^{(m-n)j} p_\Gr^* \Ber_{j/2}^{(-1)^{j}} \mathop{\otimes} p_S^* \mc{M}
    \end{equation*}
    for some line bundle $\mc{M}$ over $S$.

    Take $K \ge 0$ large enough such that
    $χ \hat{\left(z^K H^+_{j/2}\right)}_S \subset \hat{\left(H^+_{(i+j)/2}\right)}_S$ and
    the quotient
    $\left. \hat{\left(H^+_{(i+j)/2}\right)}_S \middle/ χ\hat{\left(z^K H^+_{j/2}\right)}_S\right.$ is locally free of finite type.
    As in the proof of \Cref{SWitt action on Gr}, this is possible because of the continuity of $χ$, see \cref{SWitt} for $\SWitt$ and note that an $S$-point of $Γ_\bullet$ has finitely many negative powers of $z$ and therefore acts continuously.

    It is natural to lift the action of $χ \in Γ_{i/2} \rtimes \SWitt$ from $\Gr_\bullet$ to the Berezinian line bundle from the isomorphism $ Π^j χ^{-1} Π^{i+j}$ of perfect complexes, which we may denote as $\Ber Π^j χ^{-1} Π^{i+j}$.\footnote{When $i$ is odd, the map $χ$ is a parity reversing isomorphism, which means the Berezinian of $χ$ does not
make sense. To resolve this issue, we consider only parity preserving morphisms by working with the perfect complexes which are
parity reversed when their grading is odd.} This isomorphism induces the following short exact sequence of perfect complexes:
    \begin{equation}\label{Ber chi ses}
            \begin{tikzcd}
                &  0 \arrow{d} & 0 \arrow{d}\\
                0 \arrow{r} & Π^{i+j}χ \mathcal{L} \oplus Π^{i+j}
                H_{(i+j)/2}^+ \arrow{r} \arrow{d}{Π^{j}χ^{-1}Π^{i+j}}& Π^{i+j}H_{(i+j)/2}
\arrow{r}\arrow{d}{Π^{j}χ^{-1}Π^{i+j}}& 0\\
0 \arrow{r} & Π^{j}\mathcal{L} \oplus Π^{j}χ^{-1}H_{(i+j)/2}^+
\arrow{r}\arrow{d} & Π^{j}H_{j/2}
\arrow{d} \arrow{r} & 0\\
0  \arrow{r} & \frac{Π^{j} \left(χ^{-1} H_{j/2}^+\middle/z^K H_{j/2}^+\right)}{ Π^{j}\left(H_{j/2}^+\middle/ z^K H_{j/2}^+\right) } \arrow{r}\arrow{d} & 0 \arrow{r}\arrow{d} & 0\\
&  0  & 0,
            \end{tikzcd}
        \end{equation}
        where $\mathcal{L}$ is the tautological sheaf, and subscripts $\Gr\times S$ have been omitted in the diagram. Rewriting the diagram using the identification with the Berezinian line bundle by applying \cref{Ber of Pi complex} gives
    \begin{multline*}
        Π^{(m-n)(i+j)}χ^* p_\Gr^* \Ber_{(i+j)/2}^{(-1)^{i+j}}(\mathcal{L})\\
        \begin{aligned}
            &= \Ber \left(0 \to χ^* p_\Gr^* Π^{i+j}\mc{L}\oplus Π^{i+j}\hat{\left(H^+_{(i+j)/2}\right)}_{\Gr \times S} \to Π^{i+j}\hat{\left(H_{(i+j)/2}\right)}_{\Gr \times S} \to 0 \right)\\
            &\begin{multlined}
                \xrightarrow[\widetilde{\phantom{xyz}}]{\Ber Π^j χ^{-1} Π^{i+j}}\\
               \Ber\left(0 \to p_\Gr^* Π^j\mc{L}\oplus
                Π^j χ^{-1} \hat{\left(H^+_{(i+j)/2}\right)}_{\Gr \times S}
                \to Π^j χ^{-1} \hat{\left(H_{(i+j)/2}\right)}_{\Gr \times S} \to 0 \right)
            \end{multlined}\\
            & \begin{multlined}
                = \Ber\left(0 \to p_\Gr^* Π^j\mc{L}\oplus
                Π^j \hat{\left(H^+_{j/2}\right)}_{\Gr \times S}
                \to Π^j χ^{-1} \hat{\left(H_{(i+j)/2}\right)}_{\Gr \times S} \to 0 \right)\\
                \otimes \Ber \left( Π^j χ^{-1} \hat{\left(H^+_{(i+j)/2}\right)}_{\Gr \times S} \middle/ Π^j \hat{\left(z^K H^+_{j/2}\right)}_{\Gr \times S} \right)\\
                \otimes \Ber^* \left( Π^j\hat{\left(H^+_{j/2}\right)}_{\Gr \times S} \middle/ Π^j\hat{\left(z^K H^+_{j/2}\right)}_{\Gr \times S} \right),
            \end{multlined}\\
            &
            \begin{multlined}
                =
                Π^{(m-n)j} p_{\Gr}^*\Ber_{j/2}^{(-1)^j} (\mathcal{L})
                \otimes \Ber^{(-1)^j} \left( χ^{-1} \hat{\left(H^+_{(i+j)/2}\right)}_{\Gr \times S} \middle/ \hat{\left(z^K H^+_{j/2}\right)}_{\Gr \times S} \right)\\
                \otimes \Ber^{(-1)^{j+1}} \left( \hat{\left(H^+_{j/2}\right)}_{\Gr \times S} \middle/ \hat{\left(z^K H^+_{j/2}\right)}_{\Gr \times S} \right).
            \end{multlined}
        \end{aligned}
    \end{multline*}
    Observe that the second tensor factor comes from a line bundle $\mc{M}$ over $S$, because
    \begin{equation*}
    \left.χ^{-1} \hat{\left(H^+_{(i+j)/2}\right)}_{\Gr \times S} \middle/ \hat{\left(z^K H^+_{j/2}\right)}_{\Gr  \times S} \right. = p_S^* \left(χ^{-1} \hat{\left(H^+_{(i+j)/2}\right)}_S \middle/ \hat{\left(z^K H^+_{j/2}\right)}_S \right).
    \end{equation*}
    The third tensor factor is similar.
\end{proof}

Before considering the action of $Γ_\bullet\rtimes \SWitt$ on the Berezinian line bundle, we consider the action of the discrete group of shift isomorphisms. The shift operators are $Z^A_{i/2}\coeq z^A[dz|dζ]^{\otimes i}\in Γ_{\bullet}$, for $i,A\in \mathbb{Z}$. They act on $H_\bullet$ as
\begin{multline}\label{shifts primed ZA}
    Z^A_{i/2} \co \left(H_{j/2}^{m|n}\right)^-\oplus \left(H_{j/2}^{m|n}\right)^+ \to \left(H_{(i+j)/2}^{m'|n'}\right)^-\oplus\left(H_{(i+j)/2}^{m'|n'}\right)^+ \\
    \text{where}\quad
    m'|n' \coeq \begin{cases}
        m+A | n+A, & \text{$i$ even,}\\
        n+A | m+A, & \text{$i$ odd,}
    \end{cases}
\end{multline}
inducing isomorphisms between the respective components of the Grassmannian.

\begin{lemma}\label{shift action on Gr}

    The discrete group consisting of the shift isomorphisms
    \begin{align*}
        \mathbb{Z}^2 \cong \left\langle  Z^A_{i/2} \relmiddle| i,A\in \mathbb{Z}\right\rangle
    \end{align*}
    acts on the Berezinian line bundle. Explicitly, for $L\in \Gr_{j/2}^{m|n}$ and $χ=Z^A_{i/2}$, the induced identification of fibers is
    \begin{align*}
       χ\co  Π^{(m-n)j}\Ber_{j/2}^{(-1)^j}(L) \xrightarrow{\sim} Π^{(m-n)(i+j)}\Ber_{(i+j)/2}^{(-1)^{i+j}}(χ L).
    \end{align*}
\end{lemma}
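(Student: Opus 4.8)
The plan is to treat the shifts as an explicit discrete subgroup of $Γ_\bullet$, upgrade the Picard-group equality of \Cref{Ber preserved} to a canonical fiber-wise isomorphism, and then check the group-action axioms. First I would record the group law: multiplication in $Γ_\bullet$ gives $Z^A_{i/2}\cdot Z^B_{k/2} = z^{A+B}[dz|dζ]^{\otimes(i+k)} = Z^{A+B}_{(i+k)/2}$, so $(i,A)\mapsto Z^A_{i/2}$ is an isomorphism $\ZZ^2 \xrightarrow{\sim} \langle Z^A_{i/2}\rangle$ onto an abelian subgroup. Since each $Z^A_{i/2}$ lies in $Γ_{i/2}$, \Cref{Ber preserved} applies with trivial $\SWitt$-component and supplies the isomorphism \eqref{chi-on-Ber} in $\Pic(\Gr_{j/2}(m|n))$, along with the explicit construction from its proof: the map of perfect complexes $\Ber Π^j χ^{-1} Π^{i+j}$ coming from \eqref{Ber chi ses}, modulo the correction line bundle $\mc{M}$.

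Next I would make the isomorphism canonical by trivializing $\mc M$. For $χ = Z^A_{i/2}$ one has $χ^{-1}H^+_{(i+j)/2} = z^{-A}H^+_{j/2}$, so the two correction factors in the last display of the proof of \Cref{Ber preserved} collapse, by multiplicativity of $\Ber$ in short exact sequences, to the single factor $\mc M = \Ber\!\left(z^{-A}H^+_{j/2}\big/H^+_{j/2}\right)^{(-1)^j}$ (replaced by its dual when $A<0$). This is the Berezinian of a finite-dimensional super vector space spanned by the monomials $z^n[dz|dζ]^{\otimes j}$ and $ζz^n[dz|dζ]^{\otimes j}$ over a finite range of $n$; taking the wedge of these monomials in their natural order trivializes $\mc M$ canonically. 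This trivialization pins down exactly the fiber isomorphism $χ\co Π^{(m-n)j}\Ber_{j/2}^{(-1)^j}(L) \xrightarrow{\sim} Π^{(m-n)(i+j)}\Ber_{(i+j)/2}^{(-1)^{i+j}}(χL)$ asserted in the statement, and since $\mc M$ is the Berezinian of a constant space it glues to an isomorphism of line bundles over all of $\Gr_{j/2}(m|n)$.

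Finally I would verify the action axioms. The identity shift $Z^0_{0/2}=1$ induces the identity by construction, so it remains to prove $(Z^B_{k/2})_*\circ(Z^A_{i/2})_* = (Z^{A+B}_{(i+k)/2})_*$ on fibers. Because $χ$ acts by honest multiplication on $H_\bullet$ and $\Ber$ is functorial in isomorphisms of complexes, the only possible obstruction to associativity is the interplay of the two monomial trivializations under composition. The hard part is therefore this coherence check: one must show that the natural monomial basis of $\Ber(z^{-(A+B)}H^+_{j/2}/H^+_{j/2})$ is carried, under the multiplication $z^A[dz|dζ]^{\otimes i}\cdot z^B[dz|dζ]^{\otimes k}$, onto the product of the monomial bases of $\mc M$ at the two intermediate stages, with the Koszul signs produced by the odd symbol $[dz|dζ]$ and by reordering the odd monomials inside the Berezinian all cancelling. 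Fixing one consistent ordering of monomials across every component makes the resulting scalar $2$-cocycle trivial, so that the $Z^A_{i/2}$ act honestly rather than merely projectively; I expect this sign bookkeeping, uniform in the sign of $A$ and in the parity of $i$, to be the only real difficulty.
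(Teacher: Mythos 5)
Your proposal is correct and takes essentially the same route as the paper's proof: both specialize \cref{Ber preserved} to $Z^A_{i/2}$, observe that the two correction factors combine into the Berezinian of a constant, monomial-spanned quotient of standard subspaces (so the bundle $\mc{M}$ over $S$ is canonically trivial up to a parity factor), and thereby upgrade the Picard-class equality to an isomorphism of fibers. On the point you defer --- triviality of the resulting $2$-cocycle --- the paper is just as brief, asserting only that no projective multiplier arises from the tensor factors, a fact it later re-derives cleanly from the explicit cocycle formula \eqref{group cocycle formula}, where the off-diagonal blocks of shift operators vanish.
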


\begin{proof}
    From  the proof of \cref{Ber preserved}:
    \begin{multline*}
        Π^{(m-n)(i+j)}χ^* p_\Gr^* \Ber_{(i+j)/2}^{(-1)^{i+j}}(\mathcal{L})  \xrightarrow[\widetilde{\phantom{xyz}}]{\Ber Π^j χ^{-1} Π^{i+j}}\\
        \begin{aligned}
            &\begin{multlined}
               \Ber\left(0 \to p_\Gr^* Π^j\mc{L}\oplus
                Π^j χ^{-1} \hat{\left(H^+_{(i+j)/2}\right)}_{\Gr \times S}
                \to Π^j χ^{-1} \hat{\left(H_{(i+j)/2}\right)}_{\Gr \times S} \to 0 \right)
            \end{multlined}\\
            &
            \begin{multlined}
                =
                Π^{(m-n)j} p_{\Gr}^*\Ber_{j/2}^{(-1)^j} (\mathcal{L})
                \otimes \Ber^{(-1)^j} p_S^*\left(  \hat{\left(z^{-A}H_{j/2}^+\right)}_{S} \middle/ \hat{\left(z^K H^+_{j/2}\right)}_{ S} \right)\\
                \otimes \Ber^{(-1)^{j+1}} p_S^*\left( \hat{\left(H^+_{j/2}\right)}_{S} \middle/ \hat{\left(z^K H^+_{j/2}\right)}_{ S} \right)
            \end{multlined}
        \end{aligned}
    \end{multline*}
    where the tensor factors combine into the trivial bundle over $S$, up to a parity factor $Π^A$.

    Since the tensor factors define the trivial bundle $p_S^*\mathcal{O}$, the isomorphism above upgrades to a isomorphism between fibers
    \begin{align*}
    	\Ber^{-1}(Π^{i+j} χ^{-1} Π^j)\co Π^{(m-n)j}\Ber_{j/2}^{(-1)^j}(L) \xrightarrow{\sim} Π^{(m-n)(i+j)}\Ber_{(i+j)/2}^{(-1)^{i+j}}(χ L)
    \end{align*}
    which we may denote with simply $χ$. The fact that this isomorphism of fibers is a group action follows from noticing there is no projective multiplier coming from the tensor factors.
\end{proof}

\begin{theorem}
\label{canonical central extension}
There exists a canonical central extension of the semidirect product group
\begin{align*}
    0 \to
    \left(\mathbb{C}^*\right)^{\ZZ} \to  \tilde{Γ_\bullet \rtimes \SWitt}
    \to Γ_\bullet \rtimes \SWitt \to 0,
\end{align*}
and a natural action of the extension, for $\Ber_{j/2}\in \Pic(\Gr_{j/2}(m|n))$, given by
\begin{equation*}
\tilde{\left(Γ_{i/2} \rtimes \SWitt \right)} \times Π^{(m-n)j} \Ber_{j/2}^{(-1)^j} \to Π^{(m-n)(i+j)} \Ber_{(i+j)/2}^{(-1)^{i+j}},
\end{equation*}
which is a lift of the action of $Γ_\bullet\rtimes \SWitt$ on $\Gr_\bullet$ of \Cref{semidirect action on Gr}.
\end{theorem}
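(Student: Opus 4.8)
The plan is to realize $\tilde{Γ_\bullet \rtimes \SWitt}$ as a Mumford-type theta group: for a superscheme $S$, its $S$-points are pairs $(χ,\tilde{χ})$ consisting of an $S$-point $χ$ of $Γ_\bullet \rtimes \SWitt$ together with a lift $\tilde{χ}$ of the action of $χ$ on $\Gr_\bullet$ (provided by \Cref{semidirect action on Gr}) to an isomorphism of the decorated Berezinian line bundles $Π^{(m-n)j}\Ber_{j/2}^{(-1)^j}$, with composition as the group law. The existence of such lifts is precisely the content of \Cref{Ber preserved}: for $χ \in Γ_{i/2}\rtimes\SWitt$ the isomorphism \eqref{chi-on-Ber} identifies $χ^*$ of the decorated Berezinian over $\Gr_{(i+j)/2}(m'|n')$ with the decorated Berezinian over $\Gr_{j/2}(m|n)$ in the Picard group, so a fiberwise isomorphism covering the action exists. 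The theorem's displayed map is then literally the evaluation of $\tilde{χ}$ on fibers, namely the map $\Ber^{-1}(Π^{i+j}χ^{-1}Π^{j})$ exhibited in the proof of \Cref{shift action on Gr}.

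First I would pin down the perfect-complex model so that the construction is canonical and the parity issues are controlled. Following the footnote to \Cref{Ber preserved} and the identification \eqref{Ber of Pi complex}, I work throughout with the parity-reversed complexes $Π^{i+j}(\cdots)$, for which $χ$ is parity preserving and $\Ber\, Π^j χ^{-1} Π^{i+j}$ is defined; this is exactly the isomorphism already produced by the short exact sequence \eqref{Ber chi ses}. I would then verify that $(χ,\tilde{χ})\mapsto χ$ is a surjective homomorphism of functors into $\operatorname{\mathbf{Group}}$: surjectivity is the existence of a lift, and the homomorphism property is that a lift of a composite differs from the composite of lifts only by the central ambiguity measured by the kernel.

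The heart of the matter is to identify the kernel, i.e.\ the automorphisms of the decorated Berezinian covering $\id_{\Gr_\bullet}$. A priori this is $H^0(\Gr_\bullet,\OO^\times)=(\CC^*)^{\pi_0(\Gr_\bullet)}$, one scalar $\lambda_{(j,m,n)}$ per connected component $\Gr_{j/2}(m|n)$, indexed by $(j,m,n)\in\ZZ^3$. To cut this down I use \Cref{shift action on Gr}: the shifts $Z^A_{i/2}$ lift \emph{canonically}, with no projective multiplier, and identify the decorated Berezinian over the component $(j,m,n)$ with that over its shift $(i+j,m'|n')$. A direct check shows the shift group $\ZZ^2=\langle Z^A_{i/2}\rangle$ acts \emph{freely} on $\pi_0(\Gr_\bullet)=\ZZ^3$ with quotient $\ZZ$: each orbit contains a unique representative with $j$ even, labelled by $m-n\in\ZZ$. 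Requiring these canonical shift lifts to belong to $\tilde{Γ_\bullet\rtimes\SWitt}$ and demanding centrality forces any kernel element $(\lambda_{(j,m,n)})$ to be constant along shift orbits, so the kernel is exactly $(\CC^*)^{\ZZ}$; since its elements act by global scalars, it is automatically central, and we obtain the asserted extension together with its lifted action.

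The main obstacle I anticipate is the bookkeeping that simultaneously makes the construction manifestly \emph{canonical} and functorial in $S$ and shows the kernel is no larger than $(\CC^*)^{\ZZ}$: one must check that the canonical shift lifts of \Cref{shift action on Gr} are compatible with the a priori only projective lifts of the connected factor $Γ^0_{0/2}\rtimes\SWitt$, so that the two families glue into a single group with no residual cocycle. Concretely, I would split off the canonical $\ZZ^2$ of shifts, reduce the lifting problem for the connected factor to a single representative component in each of the $\ZZ$ shift-orbits, and verify that the leftover scalar ambiguity on that representative is exactly one copy of $\CC^*$, independent of all choices, so that both centrality and the cocycle condition hold on the nose.
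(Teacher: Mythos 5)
Your proposal is correct and follows essentially the same route as the paper's proof: both rest on \Cref{Ber preserved} for the existence of lifts, \Cref{shift action on Gr} for the multiplier-free shift lifts, the splitting $Γ_\bullet\rtimes\SWitt \cong \ZZ^2\ltimes\left(Γ^0_{0/2}\rtimes\SWitt\right)$, and the count of one $\CC^*$ per $\ZZ^2$-orbit of components (labelled by $δ = m-n$), the only cosmetic difference being that you cut the full theta-group of pairs down by shift-compatibility whereas the paper builds the extension on the representative components $\Gr_{0/2}(δ|0)$ and then extends by the shift semidirect product --- which is exactly the reduction you yourself describe in your final paragraph. The one point your sketch asserts without argument, and which the paper proves by approximating the super Sato Grassmannian by finite-dimensional super Grassmannians (Penkov--Skornyakov), is the relative statement $H^0\left(\Gr_{j/2}(m|n)\times S,\OO^*\right) = H^0\left(S,\OO_S^*\right)$ needed to identify the kernel functorially in $S$.
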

\begin{proof}

Let $S$ be any superscheme. 
By \cref{Ber preserved} and its proof, there exists a sheaf isomorphism $ Π^{(m-n)j} p_\Gr^* \Ber_{j/2}^{(-1)^j} \mathop{\otimes} p_S^* \mc{M} \xrightarrow{\sim}  Π^{(m-n)(i+j)} χ^* p_\Gr^*\Ber_{(i+j)/2}^{(-1)^{i+j}}$ which lifts the action of the element $χ \in (Γ_{i/2} \rtimes \SWitt) (S)$ on $\Gr_{j/2}(m|n)\times S$.

First, notice that we may decompose the semi-direct product group into shift operators and operators preserving the connected components as $Γ_\bullet \rtimes \SWitt\cong \ZZ^2\ltimes \left(Γ_{0/2}^0 \rtimes \SWitt\right)$, where the factor $\ZZ^2$ is generated by the shift operators $Z_{i/2}^A$ defined in \cref{shifts primed ZA}. We wish to imitate the construction in \cite[Thm. 7.7.3]{Pressley.Segal.1986.lg} for the determinant line bundle on the classical Sato Grassmannian, that is first construct the central extension of $Γ_{0/2}^0 \rtimes \SWitt$ and then use the shift operators to extend to the whole group using the action in \cref{shift action on Gr}.

Noticing the action of $Γ_\bullet \rtimes \SWitt$ has orbit of size $\ZZ^2$ within the $\ZZ^3$ components of the Grassmannian, and since we wish to consider the action over any connected component, we label the orbits by $δ\in \ZZ$.
We choose the representatives $\Gr_{0/2}(δ|0)$ of each orbit.

Consider the action of $ξ\in (Γ_{0/2}^0 \rtimes \SWitt)(S)$ on $\Gr_{0/2}(δ|0)\times S$. Let $\tilde{Γ_{0/2}^0 \rtimes \SWitt}(S)$ be the set of pairs $(\tilde{ξ},ξ)$, where $\tilde{ξ}=\left\{\tilde{ξ}^δ \relmiddle| δ\in \ZZ\right\}$ are isomorphisms of line bundles which lift the action of $ξ\in (Γ_{0/2}^0 \rtimes \SWitt)(S)$ on the components $\Gr_{0/2}(\bullet|0)\times S$:
\begin{equation*}
    \begin{tikzcd}
        p_\Gr^*\Ber_{0/2} \mathop{\otimes} p_S^* \mc{M}  \arrow{r}{\tilde{ξ}^δ}\arrow{d}
        &  p_\Gr^*  \Ber_{0/2}\arrow{d}\\
        \Gr_{0/2}(δ|0)\times S \arrow{r}{ξ}
        & \Gr_{0/2}(δ|0)\times S.
    \end{tikzcd}
\end{equation*}
We may compose these pairs by composing the $S$-points $ξ$ of $Γ_{0/2}^0 \rtimes \SWitt$ and their lifts $\tilde{ξ}$ after tensoring them by the pullbacks of suitable line bundles over $S$.
Thus, we get an extension of functors of groups
\begin{equation*}
    1 \to H^0\left( \Gr_{0/2}(\bullet|0)\times S, \mathcal{O}_{\Gr_{0/2}(\bullet|0) \times S}^*\right)
    \to \tilde{Γ_{0/2}^0 \rtimes \SWitt}(S)
    \to (Γ_{0/2}^0 \rtimes \SWitt)(S) \to 1.
\end{equation*}

We claim that $H^0\left(\Gr_{j/2}(m|n)\times S, \mathcal{O}_{\Gr_{j/2}(m|n)\times S}^*\right)=
H^0(S,\mathcal{O}_S^*)$ for each $j, m, n\in \ZZ$. 
The claim applied to $\Gr_{0/2}(δ|0)$ implies the functor of groups $\tilde{Γ_{0/2}^0 \rtimes \SWitt}(S)$ of pairs $(\tilde{ξ},ξ)$ is a central extension
\begin{align*}
    1 \to
    \left(\mathbb{C}^*\right)^{\ZZ} \to  \tilde{Γ_{0/2}^0 \rtimes \SWitt}
    \to Γ_{0/2}^0 \rtimes \SWitt \to 1,
\end{align*}
where by $\mathbb{C}^*$ we mean the multiplicative group $\mathbb{G}_m$ over $\CC$, as $\mathbb{G}_m (S) = H^0(S,\mathcal{O}_S^*)$.
The claim is easy to prove by approximating the super Sato Grassmannian by finite-dimensional super Grassmannians (see \cite[Remark 8]{AlvarezVazquez.MunozPorras.PlazaMartin.1998.tafoseoabf} in the bosonic case), whose global functions are constant by \cite[Proposition 1.1]{Penkov.Skornyakov.1985.paDaofs}.

Since $\tilde{Γ_\bullet \rtimes \SWitt}\cong \ZZ^2\ltimes \tilde{Γ_{0/2}^0 \rtimes \SWitt}$, for $(Z^A_i,χ)\in \ZZ^2\ltimes \left(Γ_{0/2}^0 \rtimes \SWitt\right)
$, we define it's lift by $(Z^A_i,\tilde{χ})\in \ZZ^2\ltimes \tilde{Γ_{0/2}^0 \rtimes \SWitt}$. Thus we have constructed the central extension of the full semi-direct product group, and the corresponding lifts acting:
\begin{equation}
    \label{liftedaction}
    \begin{tikzcd}
        Π^{(m-n)j} p_\Gr^*\Ber_{j/2}^{(-1)^j} \mathop{\otimes} p_S^* \mc{M}  \arrow{rr}{\tilde{\left(Z^A_{i/2},χ\right)}^{m|n}_{j/2}}\arrow{d}
        && Π^{(m-n)(i+j)}  p_\Gr^*  \Ber_{(i+j)/2}^{(-1)^{i+j}}\arrow{d}\\
        \Gr_{j/2}(m|n)\times S \arrow{rr}{\left(Z^A_{i/2},χ\right)}
        && \Gr_{(i+j)/2}(m'|n')\times S,
    \end{tikzcd}
\end{equation}
where $m'|n'$ are as in \cref{shifts primed ZA} and the lift is identified as
\begin{align*}
	\tilde{\left(Z^A_{i/2},χ\right)}^{m|n}_{j/2}\coeq
    \begin{cases}
        Z^{-n}_{-j/2}\left(Z^A_{i/2},\tilde{\left(Z^{n}_{j/2}\,χ\,Z^{-n}_{-j/2}\right)}^{m-n}\right)Z^{n}_{j/2} & j \text{ even},\\[0.6em]
        Z^{-m}_{-j/2}\left(Z^A_{i/2},\tilde{\left(Z^{m}_{j/2}\,χ\,Z^{-m}_{-j/2}\right)}^{n-m}\right)Z^{m}_{j/2} & j\text{ odd}.
    \end{cases}
\end{align*}
\end{proof}

\section{Cocycle Computations}

In this section, we first find an explicit expression for a lift $\tilde{χ}$ of the action of $Γ_\bullet \rtimes \SWitt$ on the Berezinian line bundle, as in \eqref{liftedaction}. This lift allows the associated group 2-cocycles to be defined and its properties studied. Lastly, using the group cocycle properties, we prove two results about the action on the Berezinian line bundle.

\subsection{Expression of the lift to Berezinian line bundles}

We will also work with
$S$-points in this section for $S$ being a superscheme, but not
explicitly mention it for the sake of clarity.

An operator $χ \in Γ^0_{i/2} \rtimes \SWitt$ has a natural decomposition
\begin{align*}
χ\co  H^-_{j/2} \oplus H^+_{j/2} \to H^-_{(i+j)/2} \oplus H^+_{(i+j)/2}.
\end{align*}
Write the corresponding block forms of $χ$ and its inverse as
\begin{align}\label{block decomposition}
χ = 
\begin{pmatrix}
χ^{--} & χ^{-+}\\
χ^{+-} & χ^{++}
\end{pmatrix}, &&
χ^{-1} = 
\begin{pmatrix}
(χ^{-1})^{--} & (χ^{-1})^{-+}\\
(χ^{-1})^{+-} & (χ^{-1})^{++}
\end{pmatrix}.
\end{align}
Consider an element $χ$ sufficiently close to the shift operator $Z_{i/2}^0$ so that $χ^{--}$, $χ^{++}$, $(χ^{-1})^{--}$, and  $(χ^{-1})^{++}$ are invertible.

Given a plane $L \in \Gr_{j/2}(m|n)$, the natural lift of $χ \in Γ^0_{i/2} \rtimes \SWitt$ is defined via the isomorphism of complexes  $\Ber Π^{j}χ^{-1}Π^{i+j}$ as shown in \cref{Ber chi ses}.
This yields an isomorphism
  \begin{multline*}
      \Ber^{-1}\left(Π^{j}χ^{-1}Π^{i+j}\right)\co\\
                Π^{(m-n)j} \Ber_{j/2}^{(-1)^j} (L)
                \otimes \Ber^{(-1)^j} \left( χ^{-1} H^+_{(i+j)/2} \middle/ z^K H^+_{j/2} \right)
                \otimes \Ber^{(-1)^{j+1}} \left( H^+_{j/2} \middle/ z^K H^+_{j/2} \right)\\
                \xrightarrow{\sim} Π^{(m-n)(i+j)} \Ber_{j/2}^{(-1)^{i+j}}(χL),
    \end{multline*}
    where $K\geq 0$ is assumed to be large enough so that $χ z^K H_{j/2}^+ \subset H_{(i+j)/2}^+$. The tensor factors define a line bundle $\mathcal{M}$ over $S$ as described in \cref{Ber preserved}. It remains to cancel these tensor factors, and we do so by choosing a canonical section.

Consider the projection $p_+ \co χ^{-1} H_{(i+j)/2}^+ \to H_{j/2}^+ $ along $H_{j/2}^-$, which is an isomorphism due to the assumption that
$\left(χ^{-1}\right)^{++} = p_+ \circ 
χ^{-1}|_{H_{(i+j)/2}^+}$ is an isomorphism. By decomposing the the source and target super vector spaces via the subspace $z^KH^+_{j/2}$, we may use this isomorphism to induce 
an isomorphism of the Berezinians of the finite dimensional super vector spaces:
\begin{align*}
\Ber p_+\co
   \Ber\left(χ^{-1} H_{(i+j)/2}^+ \Big/ z^K H_{j/2}^+\right)
   \xrightarrow{\sim}
   \Ber\left(H_{j/2}^+ \Big/ z^K H_{j/2}^+\right).
\end{align*}
The resulting canonical section is independent of $K$.

Combining the earlier Berezinian of the morphism $χ^{-1}$ with the section defined by $p_+$ yields an isomorphism between fibers
\begin{multline} \label{lift defintion}
     \tilde{χ}\coeq 
     \Ber^{-1}(Π^{j}χ^{-1}Π^{i+j}) \otimes \Ber ^{(-1)^{j+1}} (p_+)  \co \\
     Π^{(m-n)j}\Ber_{j/2}^{(-1)^j}(L) \to Π^{(m-n)(i+j)}\Ber_{j/2}^{(-1)^{i+j}}(χ L),
\end{multline}
which is equivalently an isomorphism of line bundles
\begin{align*}
    \tilde{χ} \co Π^{(m-n)j}\Ber_{j/2}^{(-1)^j} \to χ^* Π^{(m-n)(i+j)}\Ber_{j/2}^{(-1)^{i+j}}.
\end{align*}

Working properly over a base superscheme $S$, this lift will define a diagram \eqref{liftedaction} with $p_S^* \MM$ trivialized by the section $\Ber^{-1}\left(p_+\right) $. This diagram is exactly what is needed to define an action of the central extension $\tilde{Γ_{\bullet} \rtimes \SWitt}$ on the Berezinian line bundle, as in \Cref{canonical central extension}.

\subsection{The associated group cocycle}

Consider $ξ\in Γ_{i/2}\rtimes \SWitt$ and $χ\in Γ_{k/2}\rtimes \SWitt$. The associated 2-cocycle to the lift  \cref{lift defintion} is then identified as
\begin{align}\label{cocycle basic def}
C^{m|n}_{j/2}(χ, ξ) &\coeq \left(\tilde{χξ}\right)^{-1} \tilde{χ}\tilde{ξ}
\end{align}
where the indices on the cocycle indicate that the composition of lifts acts on some $L\in \Gr_{j/2}(m|n)$ as shown in diagram \cref{liftedaction}, so we may omit indices on the individual lifts without ambiguity.

Unlike the previous section where we restricted to $χ \in Γ^0_{i/2} \rtimes \SWitt$, we consider lifts of arbitrary elements in $Γ_\bullet \rtimes \SWitt$ by utilizing \cref{liftedaction} to extend the lift expression. So while not explicitly written in the below, the appropriately shifted block decomposition of \cref{block decomposition} is meant.

The expression of the lift in \cref{lift defintion} may alternatively be expressed by writing its action on each component of $Π^jL\oplus Π^j H^+_{j/2}$, that is as 
\begin{align*}
    \tilde{ξ}=\Ber\left(Π^{i+j}ξΠ^{j}\oplus Π^{i+j}\left((ξ^{-1})^{++}\right)^{-1}Π^{j}\right).
\end{align*}
Using this notation to identify the composition of these morphisms of perfect complexes, we find:
\begin{align*}
C^{m|n}_{j/2}(χ, ξ) & = \begin{multlined}[t]
    \Ber^{-1}\left(Π^{i+j+k}χξΠ^{j}\oplus Π^{i+j+k}\left(((χξ)^{-1})^{++}\right)^{-1}Π^{j}\right)\\
    \circ \Ber\left(Π^{i+j+k}χΠ^{i+j}\oplus Π^{i+j+k}\left((χ^{-1})^{++}\right)^{-1}Π^{i+j}\right)\\
    \circ \Ber\left(Π^{i+j}ξΠ^{j}\oplus Π^{i+j}\left((ξ^{-1})^{++}\right)^{-1}Π^{j}\right)
\end{multlined}\\
 & = \begin{multlined}[t]
    \Ber \left(Π^{j} \left(χξ\right)^{-1} χ ξ Π^{j}
     \oplus
     Π^{j} \left((χξ)^{-1}\right)^{++} \left((χ^{-1})^{++}\right)^{-1} \left((ξ^{-1})^{++}\right)^{-1} Π^{j} \right)
 \end{multlined}
\end{align*}
Since the first factor inside the Berezinian is in fact the identity on $L$, we can see any contribution to the cocycle is due to the second factor only. If we attempt to calculate the second factor as a Berezinian of an infinite matrix in $\Aut\left(Π^{j}H_{j/2}^+\right)$, the result is in fact finite:
\begin{align}
C^{m|n}_{j/2}(χ, ξ)  &= \nonumber
\Ber \left(  Π^{j} \left((χξ)^{-1}\right)^{++} \left((χ^{-1})^{++}\right)^{-1} \left((ξ^{-1})^{++}\right)^{-1} Π^{j} \right)
\\\nonumber
& = \Ber^{(-1)^{j}} \left( \left((χξ)^{-1}\right)^{++} \left((χ^{-1})^{++}\right)^{-1} \left((ξ^{-1})^{++}\right)^{-1}  \right)\\\nonumber
& = \Ber^{(-1)^{j}} \left( \left(ξ^{--}\right)^{-1}   \left(χ^{--}\right)^{-1} (χξ)^{--} \right)\\
&=
\Ber^{(-1)^{j}}\left(I^{--} +  \left(ξ^{--}\right)^{-1} \left(χ^{--}\right)^{-1}  χ^{-+} ξ^{+-} \right)
\label{group cocycle formula}
\end{align}
This last expression is finite since this matrix differs from the identity on $H_{j/2}^-$ by an automorphism of $H_{j/2}^-$ which factors through $H_{j/2}^+$.

\color{black}

\subsection{Calculation of the algebraic cocycle}

In \cref{semidirect Lie algebra}, we showed that the Lie algebra of $Γ_\bullet\rtimes \SWitt$ is $\mathfrak{h}\rtimes \switt$. 
From the the group-theoretic 2-cocycle $C^{m|n}_{j/2}$, the Lie-algebra 2-cocycle $c^{m|n}_{j/2}$ of $\mathfrak{h}\rtimes \switt$ may be derived.

Let us identify tangent vectors to a complex supermanifold $M$ via \emph{super dual numbers}: a \emph{tangent vector} at a point $p \in M$ is a based at $p$ curve $\Spec \CC[ε_0| ε_1]/(ε_0^2,ε_0ε_1) \to M$, where $\abs{ε_i} = i$. A curve that factors through a curve $\Spec \CC[ε_0]/(ε_0^2) \to M$ is an \emph{even tangent vector}. A curve that factors through a curve $\Spec \CC[ε_1]\to M$ is an \emph{odd tangent vector}.

For even elements $X_0, Y_0$ and odd elements $X_1, Y_1$ of the Lie algebra $\mathfrak{h}\rtimes\switt$ and even and odd dual numbers $ε_0, δ_0$ and $ε_1 , δ_1$, respectively, consider the infinitesimal automorphisms $χ = I + ε_0 X_0 + ε_1 X_1$ and $ξ = I + δ_0 Y_0 + δ_1 Y_1$. For shorthand, we use $εX=ε_0 X_0 + ε_1 X_1$ and $δY=δ_0 Y_0 + δ_1 Y_1$ when confusion is unlikely. Then
\begin{align*}
C_{j/2}^{m|n}(χ,ξ)  = C^{m|n}_{j/2}(I + εX, I + δY)
    = \Ber\left(I^{--} + (-1)^{j} ε X^{-+} δ Y^{+-}  \right),
\end{align*}
and the Lie algebra cocycle can be identified, see \cite[Section 3]{Tuynman.Wiegerinck.1987.ceap}, via 
\begin{align}\label{group algebra cocycle}
    c_{j/2}^{m|n}(X,Y) & =
        \left( \pp{ε_0} \pp{δ_0} + \pp{ε_1} \pp{δ_1} \right) \left(C_{j/2}^{m|n}(χ,ξ) - C^{m|n}_{j/2}(ξ, χ)\right)\\
    &=
     \left( \pp{ε_0} \pp{δ_0} + \pp{ε_1} \pp{δ_1} \right)  (-1)^{j}\str\left( ε X^{-+}δY^{+-}  -δY^{-+}εX^{+-} \right).\nonumber
\end{align}
Decomposing with respect to the super grading
\begin{multline*}
    \str\left(ε X^{-+}δY^{+-}  -δY^{-+}εX^{+-} \right) \\
    \begin{aligned}
        &= ε_0δ_0\str(X_0^{-+}Y_0^{+-}-Y_0^{-+}X_0^{+-}) - ε_1 δ_1\str(X_1^{-+}Y_1^{+-}+Y_1^{-+}X_1^{+-}) \\
        &= ε_0δ_0c^{m|n}_{j/2}(X_0,Y_0)-ε_1δ_1c^{m|n}_{j/2}(X_1,Y_1)
    \end{aligned}
\end{multline*}
gives the Lie algebra cocycle as
\begin{align*}
c^{m|n}_{j/2}(X,Y)&=(-1)^{j}\str\left(X^{-+}Y^{+-}-(-1)^{|X||Y|}Y^{-+}X^{+-}\right).
\end{align*}

In order to calculate the Lie algebraic cocycle, we need to work with explicit generators.
Elements of $\mathfrak{h}$ are simply elements of $H_{0/2}$. By \cref{switt Lie algebra}, elements of $\switt$ may be identified with $[h D_ζ, D_ζ]$ for $h\in\mathbb{C}(\!(z)\!)[ζ]$. The even and odd generators are then
\begin{align*}
			J_p&\coeq  -z^p &	 
			L_q&\coeq [-\tfrac{1}{2}z^{q+1}D_ζ,D_ζ]
        & p,q&\in\mathbb{Z},
			\\ 
			E_r&\coeq   -ζ z^{r-\frac{1}{2}}   &
			G_s&\coeq \left[-\tfrac{1}{2}ζ z^{s+\frac{1}{2}}D_ζ,D_ζ\right]
   & r,s&\in\mathbb{Z}+\tfrac{1}{2}.
\end{align*}
Interpreting such tangent vectors as $\Spec \CC[ε_0|ε_1]$-points of $Γ_\bullet\rtimes \SWitt$ acting on $H_{j/2}$ as linear maps, we may write their matrix entries in the standard basis
\begin{align*}
\left\{z^{a}\; [dz | dζ]^{\otimes j} \relmiddle| z^{b-\frac{1}{2}} ζ \;[dz | dζ]^{\otimes j}\right\} && j \text{ even},\quad a\in \mathbb{Z}, \quad b\in\mathbb{Z}+\tfrac{1}{2},\\
\left\{ z^{b-\frac{1}{2}} ζ \;[dz | dζ]^{\otimes j}  \relmiddle|  z^{a} \;[dz | dζ]^{\otimes j} \right\} && j \text{ odd}, \quad b\in \mathbb{Z}+\tfrac{1}{2}, \quad a\in\mathbb{Z}.
\end{align*}
Writing the tangent vectors as $\mathbb{Z}\times \mathbb{Z}$ supermatrices gives the following matrix entries.
\begin{align*}
            J_p&=\begin{cases}
                (J_p)_{a',a} = ε_0\;δ_{a',a+p}(-1)
   \\
            (J_p)_{b',b} = ε_0\;δ_{b',b+p} (-1)
            \end{cases}
			\\
            E_r&=\begin{cases}
                (E_r)_{a',b} = ε_1\;δ_{a',b+r} (0)
   \\
            (E_r)_{b',a} = ε_1\;δ_{b',a+r} (-1)
            \end{cases}
   \\
            L_q&=\begin{cases}
                (L_p)_{a',a} = ε_0\;δ_{a',a+r} (-(a+\tfrac{1}{2}(q+1)j))
   \\
            (L_q)_{b',b} = ε_0\;δ_{b',b+r}(-(b-\tfrac{1}{2}+\tfrac{1}{2}(q+1)(j+1)))
            \end{cases}
			\\
            G_s&=\begin{cases}
                (G_s)_{a',b} = ε_1\;δ_{a',b+r} (1)
   \\
            (G_s)_{b',a} = ε_1\;δ_{b',a+r}(-(a+\tfrac{1}{2}(2s+1)j))
            \end{cases}
		\end{align*}
For $j$ even, the calculation $\str\left(X^{-+}Y^{+-}\right)$ using the $m|n$ block decomposition as in \cref{block decomposition} is below.
\begin{align*}
        \str(J_p^{-+}J_q^{+-})&
            =\delta_{p+q,0}\left(\sum_{i=m}^{m+q-1} (-1)(-1)
			-\sum_{i=n+\frac{1}{2}}^{n+q-\frac{1}{2}}(-1)(-1)\right)
                \\
            &=0
                \\
        \str(E_r^{-+}E_s^{+-})&
            =\delta_{r+s,0}\left(\sum_{i=n+\frac{1}{2}}^{m+s-1} (-1)(0)
			-\sum_{i=m}^{n+s-\frac{1}{2}}(0)(-1)\right)
                \\
            &=0
            \allowdisplaybreaks\\
        \str(L_p^{-+}L_q^{+-})&
            =\delta_{p+q,0}
            \begin{multlined}[t]
                \left(\sum_{i=m}^{m+q-1} \left(-\left(i+\tfrac{(p+1)j}{2}\right)\right)(-1)\right.\\
                -\left.\sum_{i=n+\frac{1}{2}}^{n+q-\frac{1}{2}}\left(-\left(i-\tfrac{1}{2}+\tfrac{(p+1)(j+1)}{2}\right)\right)(-1)\right)
            \end{multlined}
                \\
            &=\delta_{p+q,0}\;p\left(\tfrac{1}{2}(p+1)-(m-n)\right)
                \\
        \str(G_r^{-+}E_s^{+-})&
            =\delta_{r+s,0}\left(\sum_{i=n+\frac{1}{2}}^{m+s-1}(1)(-1)
			-\sum_{i=m}^{n+s-\frac{1}{2}}\left(-\left(i+\tfrac{1}{2}(2r+1)j\right)\right)(0)\right)
                \\
            &=\delta_{r+s,0}\left(r+\tfrac{1}{2}-(m-n)\right)
                \\
            \allowdisplaybreaks\\
       \str(L_p^{-+}L_q^{+-})&
            =\delta_{p+q,0}
            \begin{multlined}[t]
                \left(\sum_{i=m}^{m+q-1} \left(-\left(i+\tfrac{(p+1)j}{2}\right)\right)\left(-\left(i-q+\tfrac{(q+1)j}{2}\right)\right)\right.\\
                \left.-\sum_{i=n+\frac{1}{2}}^{n+q-\frac{1}{2}}\left(-\left(i-\tfrac{1}{2}+\tfrac{(p+1)(j+1)}{2}\right)\right)\left(-\left(i-q-\tfrac{1}{2}+\tfrac{(q+1)(j+1)}{2}\right)\right)\right)
            \end{multlined}
                \\
			&= \delta_{p+q,0}\;\tfrac{1}{4}p\left( p^2(1-2j) -(1-2(m-n))(1-2j-2(m+n))\right)
                \\
        \str(G_r^{-+}G_s^{+-})&
            =\delta_{r+s,0}
            \begin{multlined}[t]
                \left(\sum_{i=n+\frac{1}{2}}^{m+s-1}(1)\left(-\left(i-s+\tfrac{1}{2}(2s+1)j\right)\right) \right.\\
                -\left. \sum_{i=m}^{n+s-\frac{1}{2}} \left(-\left(i+\tfrac{1}{2}(2r+1)j\right)\right)(1)\right)
            \end{multlined}
                \\
			&=\delta_{r+s,0}\;\left(r^2(1-2j)-\tfrac{1}{4}(1-2(m-n))(1-2j-2(m+n))\right) 
		\end{align*}
  For $j$ odd, the computation of $\str\left(X^{-+}Y^{+-}\right)$ may be derived from the above by swapping $m$ and $n$, and introducing a factor of $-1$ out front. One can check that the cocycle is simply given by $2(-1)^j\str\left(X^{-+}Y^{+-}\right)$.
  
We summarize by stating the commutation relations of the central extension Lie algebra of $\mathfrak{h}\rtimes \switt$:
\begin{align}\label{heisenberg Lie algebra commutation relations}
		[J_p,J_q]&=0
            \\\nonumber
        [J_p,E_s]&=0 
            \\\nonumber
        [E_r,E_s]&=0
            \allowdisplaybreaks
            \\
            \label{mix Lie algebra commutation relations}
		  [L_p,J_q]&=-qJ_{p+q} &+&\delta_{p+q,0}\;p\left(p+1-2δ\right)
            \\\nonumber
        [L_p,E_s]&=-\left(s-\tfrac{1}{2}+\tfrac{1}{2}(p+1)\right)E_{p+s} \hspace{-3em}
            \\\nonumber
        [G_r,J_q]&=-qE_{q+r}
            \\\nonumber
        [G_r,E_s]&=J_{r+s}  &+&\delta_{r+s,0}\left(2r+1-2δ\right)
            \allowdisplaybreaks
            \\
            \label{ns Lie algebra commutation relations}
		[L_p,L_q]&=(p-q)L_{p+q} &+&\delta_{p+q,0}\; \tfrac{1}{2}p\left( p^2(1-2j) -(1-2δ)(1-2j-2σ)\right)
            \\\nonumber
		[L_p,G_s]&=\left(\tfrac{p}{2}-s\right)G_{p+s} 
            \\\nonumber
        [G_r,G_s]&=2L_{r+s} &+&\delta_{r+s,0}\left(2r^2(1-2j)-\tfrac{1}{2}(1-2δ)(1-2j-2σ)\right)
	\end{align}
 where we let $δ=m-n$ for $j$ even and $δ=n-m$ for $j$ odd, and $σ=m+n$.

\subsection{Properties of the group cocycle}

\begin{theorem}
\label{Gamma-action}
The action $Γ_{\bullet} \times \Gr_{\bullet} \to \Gr_{\bullet}$ in \cref{gamma action on Gr} lifts to an action on the Berezinian line bundle.
\end{theorem}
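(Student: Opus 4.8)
The plan is to show that the canonical projective lift constructed in \Cref{canonical central extension}, when restricted to the subgroup $Γ_\bullet$, is an \emph{honest} (non-projective) action; equivalently, that the group $2$-cocycle $C^{m|n}_{j/2}(χ,ξ)$ is trivial whenever both $χ,ξ$ lie in $Γ_\bullet$. Since the computation leading to \eqref{group cocycle formula} already supplies the intermediate closed form
\[
C^{m|n}_{j/2}(χ,ξ) = \Ber^{(-1)^{j}}\!\left((ξ^{--})^{-1}(χ^{--})^{-1}(χξ)^{--}\right),
\]
the whole theorem reduces to evaluating this Berezinian for Heisenberg elements. The point will be a parity cancellation that is invisible in the bosonic theory.

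First I would record the parity block structure of a Heisenberg multiplication operator. Writing $g = a(z) + α(z)ζ$ for an $S$-point of $Γ_{i/2}$ and decomposing $H_{j/2}$ into its even ($ζ$-free) and odd ($ζ$-linear) sectors, multiplication by $g$ sends an even section $f$ to $a f + (α f)ζ$ and an odd section $φζ$ to $aφζ$. Hence, in the even/odd grading, $g$ is block lower-triangular, with \emph{both} diagonal blocks equal to multiplication by the even part $a(z)$ and the single nonzero off-diagonal block equal to multiplication by $α(z)$. This triangular shape, and the equality of the two diagonal blocks, is preserved by compression to $H^-_{j/2}$, by inversion, and by composition; therefore the operator $(ξ^{--})^{-1}(χ^{--})^{-1}(χξ)^{--}$ above is itself parity-block-triangular, with equal even--even and odd--odd diagonal blocks, say $P$, and vanishing upper-right block.

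It then remains to compute the Berezinian of such an operator. For an automorphism whose even/odd block form is $\left(\begin{smallmatrix} P & 0\\ R & P\end{smallmatrix}\right)$ one has
\[
\Ber\begin{pmatrix} P & 0\\ R & P\end{pmatrix} = \det(P)\,\det(P)^{-1} = 1,
\]
the two determinants being well defined and cancelling because, by \eqref{group cocycle formula}, $P$ differs from the identity by a finite-rank operator. The only care needed is that the even and odd diagonal lattices of $H^-_{j/2}(m|n)$ are shifted relative to one another by $m-n$, see \eqref{indexed H plus minus}; but a relabelling of the index lattice does not change the determinant of an identity-plus-finite-rank operator, so the cancellation survives and $C^{m|n}_{j/2}(χ,ξ)=1$. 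Having trivialized the cocycle on the identity component $Γ^0_{0/2}$, I would extend to all of $Γ_\bullet$ through the decomposition $Γ_\bullet\cong Γ^0_{0/2}\times\ZZ^2$, the $\ZZ^2$ of shift operators already acting honestly by \cref{shift action on Gr}; compatibility of the two lifts follows since shifts are again multiplication operators of the same triangular type.

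The main obstacle is precisely this bookkeeping of parities and of the $(m|n)$-index shifts, together with the parity reversal produced by odd-degree elements, which must be tracked through the $Π$-twists of the line bundle so that the Berezinians are compared on matching lattices. I would emphasize the structural contrast that drives the result: in the bosonic case there is no odd sector, the operator $P$ is an honest Toeplitz-type compression, and $\det(P)$ is the classical nontrivial multiplier, so the Heisenberg action lifts only projectively. It is exactly the doubling of $P$ across the even and odd sectors---forcing the Berezinian rather than a determinant---that kills the cocycle in the super setting, in agreement with Schwarz's observation that his formula has no sensible bosonic counterpart.
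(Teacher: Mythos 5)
Your proposal is correct, and its central step is genuinely different from the paper's. Both proofs share the same skeleton: restrict the canonical projective lift of \cref{canonical central extension} to $Γ_\bullet$, show the group $2$-cocycle \eqref{group cocycle formula} is identically $1$, treating the identity component $Γ^0_{0/2}$ first and the shift operators $Z^A_{i/2}$ afterwards (the paper does the latter by noting $(Z^A_{i/2})^{-+}=(Z^A_{i/2})^{+-}=0$ and invoking the cocycle identity, essentially as you do). But where you evaluate the Berezinian in \eqref{group cocycle formula} in closed form, the paper passes through the infinitesimal theory: it reads off from the explicit supertrace tables that the Lie-algebra cocycle vanishes on $\mathfrak{h}$ (\cref{heisenberg Lie algebra commutation relations}), and then argues via \cref{group algebra cocycle} that the vanishing of $\str(X^{-+}Y^{+-})$ forces all higher-order terms in the expansion of the group cocycle to vanish, so $C^{m|n}_{j/2}(f,g)=1$ on $Γ^0_{0/2}$. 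Your parity-block-triangularity argument replaces that infinitesimal-to-global step — which the paper asserts rather than spells out — by a direct computation: multiplication operators are lower-triangular in the $ζ$-decomposition with diagonal blocks given by the same Toeplitz-type compression, so the Berezinian degenerates to $\det(P)\det(P)^{-1}=1$. This is more self-contained, and it isolates the structural reason the super case behaves oppositely to the bosonic one, where $\det(P)$ alone survives as the nontrivial multiplier; the paper's route, on the other hand, reuses computations it needs anyway (the same tables feed \cref{mixed cocycle independent of j} and the Neveu-Schwarz commutation relations).

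Two bookkeeping points deserve care, though neither is a gap. First, for $g = (a+αζ)[dz|dζ]^{\otimes i}$ with $i$ odd, the two diagonal blocks are not literally both $a$: the Koszul sign in \eqref{H multiplication} makes them $a$ and $(-1)^i a$, and the $ζ$-free sector of $H_{j/2}$ is \emph{odd} when $j$ is odd, so your parity labels swap. These signs cancel in the combination $(ξ^{--})^{-1}(χ^{--})^{-1}(χξ)^{--}$, since $(-1)^i(-1)^k(-1)^{i+k}=1$, and your restriction of the block computation to $Γ^0_{0/2}$ sidesteps the issue entirely; still, it should be said. Second, on a component with $m\neq n$ the "equality" of the diagonal blocks should be phrased as conjugacy under multiplication by $z^{m-n}$, which commutes with all multiplication operators and carries one compression lattice onto the other; since the determinant of an identity-plus-finite-rank operator is conjugation invariant, the two determinants agree and the cancellation survives. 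That is the precise content of your "relabelling" remark, and with it your proof is complete.
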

\begin{proof}
We show this result by first showing that the group cocycle $C^{m|n}_{j/2}$ restricted to the group $Γ_{\bullet}$ is trivial.

As stated in \cref{heisenberg Lie algebra commutation relations}, the Lie algebraic cocycle $c^{m|n}_{j/2}$ restricted to the Lie algebra $\mathfrak{h}$ is trivial:
    \begin{align*}
        c^{m|n}_{j/2}(F,G)=0, && F,G\in \mathfrak{h}.
    \end{align*}
    From this fact and the relationship between the group and Lie algebra cocycles stated in \cref{group algebra cocycle}, we claim that $C^{m|n}_{j/2}(f,g)=1$ for $f,g\in Γ^0_{0/2}$. This can be seen by first noticing that in fact the Lie algebra cocycle is proportional to the first order approximation to a single group cocycle not just the difference of cocycles, and then noticing that $\str(X^{-+}Y^{+-})=0$ implies all the higher terms vanish in the expansion of the group cocycle beyond order $1$ in $εδ$.

    We showed in \cref{shift action on Gr} that the shift operators $Z^A_{i/2}$ act on the Berezinian line bundle directly. This direct action is reflected in the group cocycle formula \cref{group cocycle formula} in that the block components of the shift operators $\left(Z^A_{i/2}\right)^{-+}=\left(Z^A_{i/2}\right)^{-+}=0$, which implies that $C^{m|n}_{j/2}\left(χ,Z^A_{i/2}\right)=C^{m|n}_{j/2}\left(Z^A_{i/2},χ\right)=1$ for any $χ\in Γ_\bullet\rtimes \SWitt$.

    Therefore it is only left to show that $C^{m|n}_{j/2}\left(f Z^A_{i/2}, g Z^{B}_{k/2}\right)=1$ for any $f,g\in Γ^0_{0/2}$ and $A,B,i,k\in\ZZ$. This follows from two applications of the group cocycle condition:
    \begin{align*}
        C\left(f Z^A_{i/2}, g Z^{B}_{k/2}\right) &= C\left(Z^A_{i/2}, g Z^{B}_{k/2}\right) \left(C\left(f, Z^A_{i/2}\right)\right)^{-1} C\left(f , g Z^{A+B}_{(i+k)/2}\right)\\
        & =         C\left(f , g Z^{A+B}_{(i+k)/2}\right)\\
        & =  C\left(f  g, Z^{A+B}_{(i+k)/2}\right) \left( C\left( g ,Z^{A+B}_{(i+k)/2}\right)\right)^{-1}  C\left(f , g\right)\\
        & = 1.
    \end{align*}
\end{proof}

\begin{prop}\label{mixed cocycle independent of j}
    For $g\in Γ_{\bullet}$ and $φ\in\SWitt$, the group cocycle $C^{m|n}_{j/2}$ of \cref{canonical central extension} with explicit expression in \cref{cocycle basic def} has the property that
    \begin{align*}
        C^{m|n}_{j/2}(φ,g)&=C^δ(φ,g) ,
        & 
        C^{m|n}_{j/2}(g,φ)&=C^δ(g,φ) ,
    \end{align*}
    for $δ=m-n$ for $j$ even, and $δ=n-m$ for $j$ odd, and where $C^δ \coeq C^{δ|0}_{0/2}$.
\end{prop}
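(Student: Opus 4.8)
The plan is to read the dependence on $(j,m,n)$ straight off the explicit group cocycle formula \cref{group cocycle formula} and show that it collapses to dependence on $δ$ alone. Taking $χ=φ\in\SWitt$ and $ξ=g\in Γ_\bullet$, that formula reads $C^{m|n}_{j/2}(φ,g)=\Ber^{(-1)^j}\!\big(I^{--}+(g^{--})^{-1}(φ^{--})^{-1}φ^{-+}g^{+-}\big)$, so the superconformal map $φ$ enters only through the combination $(φ^{--})^{-1}φ^{-+}$, while the multiplication operator $g$ enters only through $g^{+-}$ and $g^{--}$. All of the $(j,m,n)$-dependence is then concentrated in two places: the location of the polarization cut defining $(H^{m|n}_{j/2})^\pm$ in \cref{indexed H plus minus}, and the conformal-weight factor $J^j$ (with $J$ the Berezinian Jacobian $[d\hat z|d\hat ζ]/[dz|dζ]$) carried by the $\SWitt$-action on $j/2$-differentials.

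First I would reduce to the standard component using the shift operators. Since these have trivial cocycle ($C(\,\cdot\,,Z^A_{i/2})=C(Z^A_{i/2},\,\cdot\,)=1$, as established in the proof of \cref{Gamma-action}) and since the lifts on different components are conjugates by shift operators (\cref{liftedaction}), the scalar cocycle is unchanged under this conjugation, giving $C^{m|n}_{j/2}(φ,g)=C^{δ|0}_{0/2}(γφ,γg)$, where $γ$ is conjugation by $Z^{-n}_{-j/2}$ for $j$ even (by $Z^{-m}_{-j/2}$ for $j$ odd). A short computation shows that $γ$ fixes the multiplication operator $g$, whereas it sends $φ$ to $M_p\circ φ$, where $M_p\in Γ^0_{0/2}$ is multiplication by an invertible Jacobian-type series $p$ assembled from $\hat z/z$ and a power of $J$. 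Everything then reduces to showing that this extra Heisenberg factor $M_p$ does not change the cocycle.

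The cancellation mechanism I would exploit is the following: in $(φ^{--})^{-1}φ^{-+}$ any left multiplication factor $M$ that preserves $H^+_{j/2}$ (equivalently, is lower-triangular for the polarization, $M^{-+}=0$) drops out, since then $(Mφ)^{--}=M^{--}φ^{--}$ and $(Mφ)^{-+}=M^{--}φ^{-+}$, whence $((Mφ)^{--})^{-1}(Mφ)^{-+}=(φ^{--})^{-1}φ^{-+}$. Over the reduced base $S_\red$ the series $p$ is an honest invertible power series, so $M_p$ preserves $H^+_{j/2}$, the weight $J^j$ and the overall shift both cancel, and the cocycle is left depending on $φ,g$ and on $δ$ only. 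The reversed cocycle $C^{m|n}_{j/2}(g,φ)$ is treated the same way, with the two off-diagonal blocks of \cref{block decomposition} exchanging roles.

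The hard part will be the non-reduced directions of $S$: there the leading datum $\hat z=u(z)+ζω(z)$ of a superconformal automorphism may carry nilpotent negative Laurent coefficients, so $p$ acquires nilpotent negative powers, $M_p$ fails to be lower-triangular, and the naive cancellation breaks at the nilpotent level. To close this gap I would return to \cref{group cocycle formula} and argue directly that the finite-rank endomorphism $φ^{-+}g^{+-}$ of $H^-_{j/2}$ — which factors through $H^+_{j/2}$ and hence has a well-defined Berezinian — has super-determinant insensitive both to a uniform shift of the even and odd cuts (the $σ=m+n$ direction) and to the weight $J^j$ (the $j$ direction), so that only the relative cut $δ=m-n$ survives. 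This is the group-level shadow of the fact, already visible in the Lie-algebra computation, that the mixed brackets in \cref{mix Lie algebra commutation relations} depend on $δ$ but on neither $σ$ nor $j$; the main labor is organizing the block bookkeeping so that the nilpotent contributions manifestly cancel.
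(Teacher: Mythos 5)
Your setup is correct and genuinely different from the paper's route: you reduce via shift conjugation to the identity $C^{m|n}_{j/2}(\phi,g)=C^{\delta}(M_p\,\phi,\,g)$, where $M_p\in\Gamma^0_{0/2}$ is multiplication by the Jacobian-type series $p=(\hat z/z)^{n}J^{j}$, and you correctly observe that a left factor $M$ with $M^{-+}=0$ drops out of the combination $(\chi^{--})^{-1}\chi^{-+}$ through which $\phi$ enters \cref{group cocycle formula}. The paper instead computes the mixed Lie-algebra cocycle (\cref{mix Lie algebra commutation relations}), which visibly depends only on $\delta$, transfers this to the group cocycle for $f\in\Gamma^0_{0/2}$, $\phi\in\SWitt$ by the series-expansion argument of \cref{group algebra cocycle} (as in the proof of \cref{Gamma-action}), and then reaches all of $\Gamma_\bullet$ by the cocycle identity with shift operators.

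However, there is a genuine gap, and it is not peripheral. Over a reduced base every relevant operator is triangular: by the proof of \cref{Gamma-tilde}, $\phi\in\SWitt(S_\red)$ has $\hat z=u_\red(z)$ with only positive powers and $\hat\zeta=\zeta v_\red(z)$, so $\phi^{-+}=0$, and likewise $g^{-+}=0$ for $g\in\Gamma^0_{0/2}(S_\red)$; hence both $C^{m|n}_{j/2}(\phi,g)$ and $C^{\delta}(\phi,g)$ equal $1$ on $S_\red$ for trivial reasons, and the case you actually prove is vacuous. For these \emph{formal} groups the entire content of the proposition sits in the nilpotent directions, exactly the case you defer, and your plan there --- ``argue directly that the super-determinant is insensitive to the $\sigma$ and $j$ directions'' --- is a restatement of the proposition rather than an argument, since every block $\phi^{\pm\pm}$, $g^{\pm\pm}$ changes when the cut or the weight changes and no cancellation mechanism is supplied. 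A further concrete flaw: the reversed cocycle is \emph{not} ``treated the same way,'' because a left factor $M_p$ with $M_p^{-+}=0$ drops out of $C(M_p\phi,g)$ but not of $C(g,M_p\phi)$; there the relevant combination is $(M_p\phi)^{+-}\bigl((M_p\phi)^{--}\bigr)^{-1}=M_p^{+-}(M_p^{--})^{-1}+M_p^{++}\,\phi^{+-}(\phi^{--})^{-1}(M_p^{--})^{-1}$, and the extra term $M_p^{+-}(M_p^{--})^{-1}$ (nonzero even for an honest power series $p$, e.g.\ $p=1+z$) survives unless $g^{-+}=0$. To complete your approach you would need either the paper's formal series expansion controlled by the supertraces $\str(X^{-+}Y^{+-})$, or an honest block computation showing that the nilpotent contributions cancel; as written, the proposal proves the statement only where it is trivially true.
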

\begin{proof}
    As stated in \cref{mix Lie algebra commutation relations}, the Lie algebraic cocycle $c^{m|n}_{j/2}$ has the property
    \begin{align*}
        c^{m|n}_{j/2}(X,F)&=c^{δ}(X,F) ,
        &
        c^{m|n}_{j/2}(F,X)&=c^{δ}(F,X) ,
    \end{align*}
    for $X\in\switt$ and $F\in \mathfrak{h}$, and $δ=m-n$ for $j$ even and $δ=n-m$ for $j$ odd, where $c^{δ}=c^{δ|0}_{0/2}$.
    Using the relationship between the group and Lie algebra cocycles as in \cref{group algebra cocycle}, we claim it follows that
     \begin{align*}
        C^{m|n}_{j/2}(φ,f)&=C^{δ}(φ,f) ,
        &
        C^{m|n}_{j/2}(f,φ)&=C^{δ}(f,φ) ,
    \end{align*}
    for $f\in Γ^0_{0/2}$ and $φ\in\SWitt$. This can be seen by a similar argument as in \cref{Gamma-action} using a series expansion of the group cocycles.

    It only remains to show the claim with an arbitrary element of the super Heisenberg group $f Z^{A}_{i/2}\in Γ_{\bullet}$ with $f\in Γ^0_{0/2}$. This follows from an application of the group cocycle condition:
\begin{align*}
	 C\left(φ , f Z^{A}_{i/2}\right)     & =  C\left(φ  f, Z^{A}_{i/2}\right) \left( C\left( f ,Z^{A}_{i/2}\right)\right)^{-1}  C\left(φ, f\right)\\
        & = C\left(φ, f\right),
\end{align*}
and similarly for the other ordering of arguments.
\end{proof}

\section{The Neveu-Schwarz group}

Noticing that the action of super Witt group preserves the connected component of the Grassmannian, see \cref{SWitt action on Gr}, then there should be a 1 dimensional central extension associated to the action on the Berezinian line bundle on each connected component of the Grassmannian. Since the action on the Berezinian line bundle has been analyzed in \cref{action on Ber section} for the semi-direct product group $Γ_\bullet \rtimes \SWitt$, we can easily derive the Neveu-Schwarz groups using $\tilde{Γ_\bullet \rtimes \SWitt}$.

\begin{definition}[The Neveu-Schwarz group $\NS$]
\label{NS}
Define \emph{Neveu-Schwarz formal group} $\NS$ as the pullback of $\tilde{Γ_\bullet \rtimes \SWitt}$ (defined in \cref{canonical central extension}) along the inclusion $\SWitt\hookrightarrow Γ_{\bullet}\rtimes \SWitt$:
\begin{equation*}
    \begin{tikzcd}
        1\arrow{r}&
        \left(\mathbb{C}^*\right)^{\ZZ^3} \arrow[two heads]{d}{δ}\arrow{r}&  \NS
        \arrow{r} \arrow{d} \arrow[very near start, phantom]{dr}{\lrcorner} & \SWitt \arrow{r} \arrow[hook]{d} &1\\
         1\arrow{r} &
         \left(\mathbb{C}^*\right)^{\ZZ} \arrow{r} &  \tilde{Γ_{\bullet} \rtimes \SWitt}
        \arrow{r} & Γ_{\bullet}\rtimes \SWitt \arrow{r} & 1
    \end{tikzcd}
\end{equation*}
where the induced morphism of the centers is the projection $\left(\mathbb{\CC}^*\right)^{\ZZ^3} \twoheadrightarrow \left(\mathbb{\CC}^*\right)^{\ZZ}$ from each connected component of the Grassmannian onto $\Gr_{0/2}(δ|0)$.
In particular, an element $\tilde{φ}\in \NS$ is the collection of $\ZZ^3$ isomorphisms of line bundles $\tilde{φ}^{m|n}_{j/2}$ (one for each component of $\Gr_{j/2}^{m|n}$) which map to $\tilde{\left(0,φ\right)}^{m|n}_{j/2}\in \tilde{Γ_{\bullet} \rtimes \SWitt}$ defined in \cref{liftedaction}.
\end{definition}
Note that we may define 1-dimensional central extensions of $\SWitt$ denoted as $\NS^{m|n}_{j/2}$ by the group of lifts $\tilde{φ}^{m|n}_{j/2}$ for fixed indices. These extensions may equivalently be defined by the restriction of the cocycle $C^{m|n}_{j/2}$ to $\SWitt$.

\begin{cor}
    The Neveu-Schwarz algebra $\mathfrak{ns}$ is the Lie algebra of the formal group superscheme $\NS^{0|0}_{0/2}$.
\end{cor}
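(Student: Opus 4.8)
The plan is to combine \Cref{switt Lie algebra}, which identifies $\switt$ as the Lie algebra of $\SWitt$, with the general principle that differentiating a one-dimensional central extension of formal supergroups yields a one-dimensional central extension of Lie superalgebras whose defining cocycle is the differential of the group cocycle. First I would recall from \cref{NS} and the remark following it that $\NS^{0|0}_{0/2}$ is, by construction, the central extension of $\SWitt$ by $\mathbb{G}_m$ determined by the restriction to $\SWitt$ of the group $2$-cocycle $C^{0|0}_{0/2}$.

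Next, I would pass to Lie algebras in the defining short exact sequence of formal group superschemes $1 \to \mathbb{G}_m \to \NS^{0|0}_{0/2} \to \SWitt \to 1$. Since $\operatorname{Lie}(\mathbb{G}_m) = \CC$ and $\operatorname{Lie}(\SWitt) = \switt$ by \Cref{switt Lie algebra}, this produces a central extension of Lie superalgebras $0 \to \CC \to \operatorname{Lie}(\NS^{0|0}_{0/2}) \to \switt \to 0$. The extension class is governed by the differential of $C^{0|0}_{0/2}$, which by the group-to-Lie-algebra cocycle correspondence already used in \cref{group algebra cocycle} is exactly the algebraic cocycle $c^{0|0}_{0/2}$ computed earlier.

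Finally, I would specialize the explicit commutation relations to $j = 0$ and $m = n = 0$, so that $\delta = \sigma = 0$, obtaining
\begin{align*}
[L_p, L_q] &= (p-q)L_{p+q} + \delta_{p+q,0}\,\tfrac{1}{2}p(p^2-1),\\
[L_p, G_s] &= \left(\tfrac{p}{2} - s\right)G_{p+s},\\
[G_r, G_s] &= 2L_{r+s} + \delta_{r+s,0}\left(2r^2 - \tfrac{1}{2}\right).
\end{align*}
These are precisely the defining relations of the Neveu-Schwarz algebra $\ns$, so the central extension $\operatorname{Lie}(\NS^{0|0}_{0/2})$ is canonically identified with $\ns$, which is the assertion of the corollary.

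The main obstacle will be the rigorous justification that differentiating a central extension of \emph{formal} group superschemes produces the Lie-algebra central extension with the expected cocycle; in the infinite-dimensional, super, formal setting this is not entirely automatic. However, the essential ingredients are already in place: \Cref{switt Lie algebra} supplies $\operatorname{Lie}(\SWitt) = \switt$, and the explicit super-dual-number passage from $C^{m|n}_{j/2}$ to $c^{m|n}_{j/2}$ recorded in \cref{group algebra cocycle} supplies the needed differentiation at the level of cocycles. Thus the corollary reduces to assembling these facts and reading off the $j=0$, $m=n=0$ specialization, so the argument is short.
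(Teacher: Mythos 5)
Your proposal is correct and follows essentially the same route as the paper: the paper's proof is exactly the specialization $j=0$, $\delta=0$, $\sigma=0$ in the commutation relations \cref{ns Lie algebra commutation relations}, yielding the relations you wrote (the paper adds only the caveat that the central term matches the standard normalization up to a factor of $2$). Your additional scaffolding --- differentiating the defining central extension $1 \to \mathbb{G}_m \to \NS^{0|0}_{0/2} \to \SWitt \to 1$ and invoking \Cref{switt Lie algebra} together with the cocycle correspondence of \cref{group algebra cocycle} --- is precisely what the paper leaves implicit, so it is a slightly more careful write-up of the same argument rather than a different one.
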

\begin{proof}
    The proof is simply letting $j=0$, $δ=0$, and $σ=0$ in equation \cref{ns Lie algebra commutation relations} to arrive at the standard cocycle of the Neveu-Schwarz algebra $\mathfrak{ns}$ (at least, up to a factor of $2$ on the center).
    \begin{align*}
        [L_p,L_q]&=(p-q)L_{p+q}+\delta_{p+q,0} \tfrac{p^3-p}{2} \\
        [L_p,G_s]&=\left(p-2s\right)G_{p+s}\\
        [G_r,G_s]&=2L_{r+s}+\delta_{r+s,0}\left(2r^2-\tfrac{1}{2}\right)
    \end{align*}
\end{proof}

\section{Schwarz's super \texorpdfstring{$\tau$}{τ} function}

In this section, we define Schwarz's tau function adapted to our setting of the graded Grassmannian $\Gr_{j/2}(m|n)$.
\Cref{Gamma-action} shows that the super Heisenberg group acts on the Berezinian line bundle without a center, which allows for Schwarz's tau function to be defined for arbitrary elements of $Γ_\bullet$.

\begin{definition}[{\textcite{Schwarz.1989.fsaums}}] \label{schwarz tau def}
    Let $L\in \Gr_{j/2}(m|n)$ and $g\in Γ_{i/2}$. Then \emph{Schwarz's super tau function} is defined as
    \begin{multline*}
    τ_L(g)\coeq  \Ber\Big( Π^{i+j}gL\xrightarrow{Π^{j}g^{-1}Π^{i+j}} Π^jL\Big) \\
    \in H^0\Big(Γ_{i/2}\times \Gr_{j/2},\; Π^{(m-n)i} γ^* \Ber_{(i+j)/2}^{-(-1)^{i+j}} \otimes \Ber_{j/2}^{(-1)^j}\Big),
\end{multline*}
    where the Berezinian denotes the section defined by the action of
    \begin{align*}
        g^{-1}\co Π^{(m-n)(i+j)}\Ber_{H^+}^{(-1)^{i+j}} \to (g^{-1})^* Π^{(m-n)(j)}\Ber_{H^+}^{(-1)^{j}},
    \end{align*}
with $γ\co Γ_{i/2}\times \Gr_{j/2}\to \Gr_{(i+j)/2}$ being the action of the super Heisenberg group on the Grassmannian as in \Cref{gamma action on Gr}.
\end{definition}

We also have
\begin{equation*}
\tau_{gL} (g^{-1}) = \tau_{L}^{-1} (g),
\end{equation*}
which follows from a more general identity
\begin{equation}
\label{taufg}
    \tau_{L} (fg) = \tau_{gL}(f) \, \tau_{L} (g) .
\end{equation}

Define the action of $χ\in G$ on sections $s_L(g)$ of a $G$-equivariant sheaf on $Γ_{i/2}\times \Gr_{j/2}$ as
\begin{equation*}
    (\tilde{χ} s)_L (g) \coeq \tilde{χ} \left(s_{χ^{-1} L}(\Ad_{χ^{-1}} g)\right).
\end{equation*}

\begin{lemma}\label{NS action on tau}
    The action of $\NS$ on Schwarz's super tau function is given by a multiplicative factor. Moreover, for $\tilde{φ}\in\NS$, $g\inΓ_{\bullet}$, and $L\in\Gr_{j/2}(m|n)$, the action is explicitly
\begin{align*}
\left(\tilde{φ} τ\right)_L (g) =   \frac{C^δ \left(φ,\Ad_{φ^{-1}}(g^{-1})\right)}{C^δ\left(g^{-1},φ\right)}  \cdot {τ}_{L}(g),
\end{align*}
for $δ=m-n$ for $j$ even, and $δ=n-m$ for $j$ odd, and where $C^δ \coeq C^{δ|0}_{0/2}$.
\end{lemma}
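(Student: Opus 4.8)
The plan is to reduce the whole computation to the two-cocycle identity of \cref{canonical central extension} together with the identification of $τ$ as the canonical section implementing the centerless $Γ_\bullet$-lift of \cref{Gamma-action}.

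First I would pin down the precise relationship between $τ_L(g)$ and the lifts $\tilde{χ}$ of \eqref{lift defintion}. Comparing the Berezinian section $\Ber\bigl(Π^{j}g^{-1}Π^{i+j}\bigr)$ that defines $τ_L(g)$ with the formula for $\tilde{χ}$, and using $\Ber^{-1}(f)=\Ber(f^{-1})$, one sees that $τ_\bullet(g)$ is exactly the section implementing the lift $\widetilde{g^{-1}}$ of the action of $Γ_\bullet$ on $\Ber_\bullet$. Crucially, by \cref{Gamma-action} this action carries no central multiplier, so $τ$ is an honest section and the multiplicativity \eqref{taufg} is precisely the crossed-homomorphism (trivial-cocycle) relation for it, which confirms the identification.

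Next I would unwind the $\NS$-action. By the defining formula $(\tilde{χ}s)_L(g)=\tilde{χ}\bigl(s_{χ^{-1}L}(\Ad_{χ^{-1}}g)\bigr)$, and writing $h=g^{-1}$ so that the inner tau function implements the lift of $\Ad_{φ^{-1}}h=(\Ad_{φ^{-1}}g)^{-1}$, I would compose with $\tilde{φ}$ and push it through using the cocycle relation $\tilde{χ}\tilde{ξ}=C^{m|n}_{j/2}(χ,ξ)\widetilde{χξ}$ from \eqref{cocycle basic def}. The group identity $φ\cdot(\Ad_{φ^{-1}}h)=h\cdot φ$ in $Γ_\bullet\rtimes\SWitt$ then gives
\[
\tilde{φ}\,\widetilde{\Ad_{φ^{-1}}h}
=\frac{C^{m|n}_{j/2}\bigl(φ,\Ad_{φ^{-1}}h\bigr)}{C^{m|n}_{j/2}(h,φ)}\,\tilde{h}\,\tilde{φ},
\]
where the trailing $\tilde{φ}$ is exactly the base-point transport $L\mapsto φ^{-1}L$ that the action on sections prescribes, so it contributes no extra multiplier; what survives over the fiber at $(g,L)$ is the scalar prefactor times $\tilde{h}=τ_L(g)$.

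Finally, since the $C^{m|n}_{j/2}$ take values in $\CC^*$, the surviving prefactor is a genuine scalar, which is what "the action is given by a multiplicative factor" means; and by \cref{mixed cocycle independent of j} each mixed cocycle $C^{m|n}_{j/2}(φ,-)$ and $C^{m|n}_{j/2}(-,φ)$ equals $C^{δ}$, yielding the stated formula with $h=g^{-1}$. I expect the main obstacle to be the bookkeeping in the first step: matching the parity-reversal factors $Π^{\bullet}$, the exponents $(-1)^{j}$ versus $(-1)^{i+j}$, and the canonical $p_+$-trivialization built into $\tilde{χ}$ against the raw Berezinian defining $τ$, so that the cocycle arguments come out as $g^{-1}$ and $\Ad_{φ^{-1}}(g^{-1})$ rather than $g$ and $\Ad_{φ^{-1}}g$, and so that no spurious multiplier is left over from the point-shift. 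Once that identification is secured, the remaining two steps are formal consequences of the cocycle relation.
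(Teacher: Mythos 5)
Your proposal is correct and follows essentially the same route as the paper's proof: identify $τ_L(g)$ with the (centerless, by \cref{Gamma-action}) lift $\tilde{g^{-1}}$, unwind the equivariant action on sections, apply the cocycle relation together with the group identity $φ\cdot\Ad_{φ^{-1}}(g^{-1})=g^{-1}φ$ to produce the ratio $C\left(φ,\Ad_{φ^{-1}}(g^{-1})\right)/C\left(g^{-1},φ\right)$, and then invoke \cref{mixed cocycle independent of j} to identify both cocycles as $C^δ$. The paper phrases the middle step as conjugation $\tilde{φ}\,\tilde{\Ad_{φ^{-1}}g^{-1}}\,\tilde{φ}^{-1}$ rather than your trailing-$\tilde{φ}$ formulation, but these are the same computation.
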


\begin{proof}
The line bundle $γ^*\Ber^*_{(i+j)/2} \otimes  \Ber_{j/2}$ over $Γ_{i/2}\times \Gr_{j/2}$ acquires the structure of an NS-equivariant line bundle as the tensor product of two NS-equivariant line bundles. The line bundle $γ^*\Ber^*_{(i+j)/2}$ becomes NS-equivariant as the pullback of an NS-equivariant line bundle via an NS-equivariant map $γ\co Γ_{i/2}\times \Gr_{j/2} \to \Gr_{(i+j)/2}$. Thus, an element $\tilde{\phi}$ of the Neveu-Schwarz group acts on a section $γ^* s_1 \otimes s_2$ of $γ^*\Ber^*_{(i+j)/2} \otimes  \Ber_{j/2}$ by the formula
\begin{equation*}
\tilde \phi s_1\left( \phi^{-1}(gL)\right) \otimes \tilde \phi s_2(\phi^{-1} L).
\end{equation*}
Hence, the action on the super tau function will be
\begin{equation*}
    \tilde{\phi} \left(\tau_{\phi^{-1}L} (\Ad_{\phi^{-1}}g)    \right) = \tilde \phi  \Ber\left( Π^{i+j}\phi^{-1} g L \xrightarrow{Π^j\Ad_{\phi^{-1}}(g^{-1})Π^{i+j}}   Π^j\phi^{-1} L\right)
\end{equation*}

Recall the definition of the group cocycle $C^{m|n}_{j/2}$ in \cref{cocycle basic def}. The projective factor which the cocycle represents is equivalent to the cyclic permutations of the lifts within the cocycle definition. So alternatively, we have $C(χ, ξ) \cong  \tilde{χ}\tilde{ξ}\left(\tilde{χξ}\right)^{-1} \cong \tilde{ξ}\left(\tilde{χξ}\right)^{-1}  \tilde{χ}$.

It is then a simple matter of algebraic manipulation to write the adjoint action of $\tilde{φ}\in \NS$ on the lift $\tilde{\Ad_{φ^{-1}}g^{-1}} $ as a multiple of the original lift $\tilde{g^{-1}}$:
\begin{align*}
     \tilde{φ}\left(\tilde{\Ad_{φ^{-1}}g^{-1}} \right)\left(\tilde{φ}\right)^{-1}& = \tilde{g^{-1}}\left(\tilde{g^{-1}}\right)^{-1} \tilde{g^{-1}φ}\left(\left(\tilde{g^{-1}φ}\right)^{-1}\tilde{φ} \left(\tilde{\Ad_{φ^{-1}}g^{-1}}\right)\right)\left(\tilde{φ}\right)^{-1}\\
     &=\tilde{g^{-1}}\left(\left(\tilde{g^{-1}}\right)^{-1} \tilde{g^{-1}φ}\;\left(\tilde{φ}\right)^{-1} \right) C_{(i+j)/2}^{m'|n'} \left(φ,\Ad_{φ^{-1}}(g^{-1})\right)\\
     &=\tilde{g^{-1}}\left(C_{(i+j)/2}^{m'|n'}\left(g^{-1},φ\right)\right)^{-1} C_{(i+j)/2}^{m'|n'} \left(φ,\Ad_{φ^{-1}}(g^{-1})\right).
\end{align*}
Here we have identified the indices on the two cocycles as $m'|n'$ as defined in \cref{shifts primed ZA} for $g\in Γ^A_{i/2}$.
Lastly, we may apply \cref{mixed cocycle independent of j} to identify the two cocycles both as $C^δ$.
\end{proof}

\begin{lemma}\label{Gamma action on tau}
    Schwarz's super tau function $τ_L(g)$ is invariant under the action of the super Heisenberg group $Γ_\bullet$.
\end{lemma}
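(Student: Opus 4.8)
The plan is to run the computation from the proof of \Cref{NS action on tau} with an element of the super Heisenberg group in place of $φ \in \SWitt$, and then to annihilate the resulting projective factor using the triviality of the group cocycle on $Γ_\bullet$ established in \Cref{Gamma-action}.

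First I would unwind the definition of the equivariant action. For $h \in Γ_\bullet$, acting through its canonical lift $\tilde h$ to the Berezinian line bundle — which exists as a genuine (non-projective) lift precisely because \Cref{Gamma-action} splits the central extension over $Γ_\bullet$ — the action on the tau section reads
\[
\left(\tilde h\, τ\right)_L(g) = \tilde h\left( τ_{h^{-1}L}\big(\Ad_{h^{-1}} g\big)\right).
\]
Since $Γ_\bullet$ is abelian by \Cref{rep-Gamma}, we have $\Ad_{h^{-1}} g = h^{-1} g h = g$, so the right-hand side already collapses to $\tilde h\big(τ_{h^{-1}L}(g)\big)$.

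Next I would transport the adjoint-and-cocycle manipulation from the proof of \Cref{NS action on tau} verbatim. That argument uses only the definition of the lift \eqref{lift defintion}, the definition of the cocycle \eqref{cocycle basic def}, and the cyclic rewriting $C(χ,ξ) \cong \tilde χ\,\tilde ξ\,(\tilde{χξ})^{-1}$; at no point does it use that the acting element lies in $\SWitt$ rather than in $Γ_\bullet$. It would therefore yield
\[
\left(\tilde h\, τ\right)_L(g) = \frac{C^{m'|n'}_{(i+j)/2}\big(h,\,\Ad_{h^{-1}}(g^{-1})\big)}{C^{m'|n'}_{(i+j)/2}\big(g^{-1},\,h\big)}\cdot τ_L(g),
\]
with $m'|n'$ as in \eqref{shifts primed ZA}. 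Both cocycles are now evaluated on pairs of elements of $Γ_\bullet$: in the numerator $\Ad_{h^{-1}}(g^{-1}) = g^{-1}$ by abelianness, and by \Cref{Gamma-action} the cocycle $C^{m'|n'}_{(i+j)/2}$ restricted to $Γ_\bullet$ is identically $1$. Hence both cocycles equal $1$, the multiplicative factor is $1$, and $\left(\tilde h\, τ\right)_L(g) = τ_L(g)$, which is the asserted invariance. Notice that, unlike in \Cref{NS action on tau}, there is no need to pass through the reduction of \Cref{mixed cocycle independent of j} to $C^{δ}$, since here \Cref{Gamma-action} kills the cocycles directly in their $m'|n'$-indexed form.

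The step I expect to be the main obstacle is the bookkeeping in the third paragraph: one must confirm that the canonical lift $\tilde h$ furnished by the splitting of \Cref{Gamma-action} is exactly the lift entering the equivariant action, and that the surviving projective factor is genuinely a ratio of cocycles with both arguments inside $Γ_\bullet$ — for it is only there that \Cref{Gamma-action} can be invoked to force triviality. Once that identification is secured, everything else is formal.
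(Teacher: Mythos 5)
Your proposal is correct and essentially coincides with the paper's own proof: both arguments rest on the equivariant-action formula, the commutativity of $Γ_\bullet$ (so that $\Ad_{f^{-1}}g = g$), and \Cref{Gamma-action} as the source of centerlessness of the $Γ_\bullet$-action on the Berezinian line bundle. The only difference is how the last step is packaged --- the paper applies the genuine (non-projective) lift of $f \in Γ_\bullet$ directly to the Berezinian section defining $τ_{f^{-1}L}(g)$ and reads off $τ_L(g)$, whereas you make the potential projective multiplier explicit as the cocycle ratio from \Cref{NS action on tau} and then annihilate it using the triviality of the cocycle restricted to $Γ_\bullet$ established in the proof of \Cref{Gamma-action}; these are the same underlying fact (that the lifts of $f$ and $g^{-1}$ commute) expressed in two equivalent ways.
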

\begin{proof}
    The line bundle $γ^*\Ber^*_{(i+j)/2} \otimes  \Ber_{j/2}$ is $Γ_\bullet$-invariant for similar reasoning to that in the previous lemma for $\NS$. Further the action on a section by $f\in Γ_\bullet$, is given by:
\begin{align*}
    (f τ)_L(g) & =
    f \left(\tau_{f^{-1}L} (\Ad_{f^{-1}}g)    \right)  = f \left(\tau_{f^{-1}L} (g)    \right)\\
    &= f  \Ber\left( Π^{i+j}f^{-1} g L \xrightarrow{Π^j g^{-1}Π^{i+j}}   Π^j f^{-1} L\right) = τ_L(g).
\end{align*}
Here we have used the commutivity of $Γ_\bullet$ to show $\Ad_{f^{-1}}g = f^{-1} gf = g$, and the action without center of $Γ_\bullet$ on the Berezinian line bundle shown in \cref{Gamma-action}.
\end{proof}

\color{black}
\section{Duality on the super Sato Grassmannian}\label{duality section}

In order to combine Schwarz's super tau functions into an extended super Mumford form, we need to restrict to a certain locus within the super Sato Grassmannian. This locus is characterized by the duality discussed in this section.

Just as in the classical case, for the super vector space $H_\bullet$ there is a natural bilinear scalar product, which in the super case is given by
\begin{align}\label{innerproduct}
   \langle v,w\rangle \coeq\oint_{\mathbb{S}^{1|1}} v\cdot w
\end{align}
In coordinates $(z|ζ)$ this is given by
\begin{align*}
    \Big\langle (v_0+v_1ζ)[dz|dζ]^j,(w_0+w_1ζ)[dz|dζ]^k\Big\rangle & = \delta_{j+k,1}\oint_{\mathbb{S}^{1|1}} (v_0+v_1ζ)(w_0+w_1ζ)[dz|dζ]\\
    &=\delta_{j+k,1}\oint_{\mathbb{S}^{1}} (v_0w_1+v_1w_0)dz.
\end{align*}

\begin{lemma}\label{invariant inner product}
\begin{enumerate}[$(1)$]
\item The inner product \eqref{innerproduct} on the super vector space $H_\bullet$ is $Γ_\bullet$-invariant, if we define the left action of $g \in Γ_\bullet$ on the first factor as multiplication by $g^{-1}$ on the right and on the second factor as multiplication by $g$ on the left:
\begin{equation*}
\langle v g^{-1}, g w \rangle = \langle v, w \rangle.
\end{equation*}
    \item 
The inner product \eqref{innerproduct}  is $\SWitt$-invariant, where $\SWitt$ acts as in \eqref{switt H action}:
\begin{equation*}
\langle \phi v, \phi  w \rangle = \langle v, w \rangle.
\end{equation*}
\end{enumerate}
\end{lemma}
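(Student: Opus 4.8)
The plan is to reduce both claims to two facts: that $\oint_{\mathbb{S}^{1|1}}$ is essentially the residue functional $H_{1/2}\to\CC$, $(g_0+g_1ζ)[dz|dζ]\mapsto\oint_{\mathbb{S}^1}g_1\,dz$, and that the two group actions interact with the multiplication \eqref{H multiplication} in a controlled way. Observe first that $\langle v,w\rangle$ is nonzero only when the weights of $v$ and $w$ sum to $1/2$, and each action respects this: in $(1)$ the weights become $(j-i)/2$ and $(k+i)/2$, again summing to $(j+k)/2$, and in $(2)$ the weight is unchanged. Part $(1)$ is then immediate. Each $g\in Γ_\bullet$ is invertible in $H_\bullet$, and by associativity of the multiplication \eqref{H multiplication} alone---no reordering, hence no Koszul signs---$(vg^{-1})\cdot(gw)=v\cdot(g^{-1}g)\cdot w=v\cdot w$; the two integrands coincide, so $\langle vg^{-1},gw\rangle=\langle v,w\rangle$.

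For part $(2)$ I would first use Remark~\ref{auts}: since $φ\in\SWitt$ acts by a $\CC$-algebra automorphism, $φv\cdot φw=φ(v\cdot w)$, and the claim collapses to the single statement that the residue is $\SWitt$-invariant, i.e.\ $\oint_{\mathbb{S}^{1|1}}φ(u)=\oint_{\mathbb{S}^{1|1}}u$ for all $u\in H_{1/2}$. The geometric input is the transformation law of the superconformal half-density: for $φ$ written as in \eqref{aut} one has $[d\hat z|d\hat ζ]=(D_ζ\hat ζ)\,[dz|dζ]$. I would derive this by computing the Berezinian of the super-Jacobian of $(z|ζ)\mapsto(\hat z|\hat ζ)$ and simplifying with the superconformality relations \eqref{rels}; the point is that $ω=ηv$ and $u'=v^2-ηη'$ are exactly what make this Berezinian collapse to $D_ζ\hat ζ=v+ζη'$ (cf.\ \cite{Witten.2019.nosRsatm}). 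Thus $φ(u)=f(\hat z,\hat ζ)\,(v+ζη')\,[dz|dζ]$.

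The last step is a direct expansion. Writing $u=(f_0(z)+f_1(z)ζ)[dz|dζ]$ and Taylor-expanding $f(\hat z,\hat ζ)$ in the even nilpotent increment $ζω$ and in $\hat ζ=η+ζ v$, I would isolate the coefficient of $ζ$ in $f(\hat z,\hat ζ)(v+ζη')$. Using $ω=ηv$ and $v^2=u'+ηη'$, all terms carrying $ηη'$ cancel, and the coefficient reduces to $\frac{d}{dz}\!\big(f_0(u)\,η\big)+f_1(u)\,u'$. The residue of the total $z$-derivative is zero, while $\res_z\!\big(f_1(u)\,u'\big)=\res_z f_1$ by the residue theorem for the formal change of variable $z\mapsto u(z)$ (legitimate because $u\in Γ^1_{0/2}$ has invertible linear coefficient and only nilpotent non-positive part, exactly as in the bosonic case). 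Hence $\oint_{\mathbb{S}^{1|1}}φ(u)=\res_z f_1=\oint_{\mathbb{S}^{1|1}}u$, completing $(2)$.

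The main obstacle is the half-density transformation law together with the sign- and parity-bookkeeping in the expansion: the naive coordinate Berezinian only reduces to $D_ζ\hat ζ$ after the correct super-transpose convention is used and \eqref{rels} is invoked, and it is precisely the cancellation of the $ηη'$ terms that produces the invariance. One could instead argue infinitesimally, checking that $\switt$ annihilates the functional $\oint_{\mathbb{S}^{1|1}}$---using that it kills $\partial_z$-exact half-densities---and then invoking the connectedness of the formal group $\SWitt$; but the direct computation above appears to be the most transparent.
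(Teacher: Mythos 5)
Your proposal is correct and takes essentially the same route as the paper: Part (1) is immediate from invertibility and associativity, and Part (2) factors the pairing \eqref{innerproduct} as the ($\SWitt$-equivariant, by \cref{auts}) multiplication followed by the residue functional $\oint_{\mathbb{S}^{1|1}}$, whose invariance under coordinate changes gives the result. The only difference is one of completeness: the paper asserts the coordinate-invariance of $\oint_{\mathbb{S}^{1|1}}$ as known, while you verify it explicitly via the half-density transformation law $[d\hat z|d\hat ζ]=(D_ζ\hat ζ)[dz|dζ]$ and the formal change-of-variable residue theorem, which is precisely where the superconformal relations \eqref{rels} do their work.
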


\begin{proof}
Part (1) is obvious from \eqref{innerproduct}. For Part (2) note that the inner product \eqref{innerproduct} is the composition of the multiplication map on $H_\bullet$ and the residue at the origin. Since the multiplication map 
   \begin{align*}
       H_\bullet\otimes H_\bullet \to H_\bullet
   \end{align*}
   is $\SWitt$-equivariant and the residue map, given by the integral
   \begin{align*}
   \oint_{\mathbb{S}^{1|1}} \co H_\bullet\to \mathbb{C},
   \end{align*}
   is invariant with respect to coordinate changes, which is what $\SWitt$ does, the inner product will be  $\SWitt$-invariant.
\end{proof}

\begin{rem}
Let $H_{j/2}^*$ denote the continuous dual space of $H_{j/2}$ with respect to the $z$-adic topology.    Then there is a canonical \emph{duality isomorphism}
    \begin{align*}
         H_{j/2} & \cong (H_{(1-j)/2})^*\\
         v & \mapsto \langle v, - \rangle.
    \end{align*}
\cref{invariant inner product} implies that the duality isomorphism
\begin{align*}
     H_{\bullet/2} \to (H_{(1-\bullet)/2})^*
\end{align*}
is $\SWitt$-equivariant. It will also be $Γ$-equivariant, if we use the left action of $g \in Γ$ on $H_{\bullet/2}$ by multiplication by $g^{-1}$ and the standard left multiplication by $g$ on $H_{(1-\bullet)/2}$.
\end{rem}

We now use the inner product defined above to define the duality on the super Sato Grassmannian. See Section 2.E of \cite{MunozPorras.PlazaMartin.1999.eotmopcitiG} for a description of duality on the classical Sato Grassmannian.

Let $S$ be a $\CC$-superscheme and $\hat{(H_{j/2})}_S$ be the completed trivial vector bundle with fiber $H_{j/2}$ over $S$.

\begin{definition} Consider an $S$-point of the super Sato Grassmannian $L$. That is to say $L\subset \hat{(H_{j/2})}_S$ is discrete. Define the \emph{orthogonal complement of $L$} by
\begin{equation*}
    L^\perp(U) \coeq \Big\{ v\in \hat{(H_{j/2})}_S(U) \mathrel{\Big|}
    \langle v, w \rangle =0 \text{ for all $w \in L(U)$} \Big\}
\end{equation*}
for each open $U \subset S$.
The (\emph{Serre}) \emph{duality map} on the super Sato Grassmannian is given by
\begin{align*}
    \perp\co\Gr_{j/2}(m|n) &\to \Gr_{(1-j)/2}(-m|-n), &
    L &\mapsto L^\perp,
\end{align*}
where we define $\Gr_{j/2}(m|n)$ to be the connected component of the super Sato Grassmannian of discrete subspaces $D$ which have Fredholm index $(m|n)$, i.e., the Fredholm index of the operator $D\oplus H^+_{j/2}\to H_{j/2}$, where $H^+_{j/2}\coeq \CC [\![z]\!][ζ]\;[dz|dζ]^{\otimes j}$ is the distinguished compact subspace.
\end{definition}

\begin{rem}
\label{perp}
    Observe that $\Ber_{(1-j)/2} (L^\perp) = \Ber_{j/2} (L)$ and therefore we have a canonical isomorphism of line bundles over $\Gr_{j/2}$:
    \begin{equation*}
    \perp^* \Ber_{(1-j)/2} = \Ber_{j/2}.
    \end{equation*}
\end{rem}

\section{Schwarz's locus}

In preparation to define Schwarz's extended Mumford form, we define in this section the locus within the Grassmannian which will be used.

\begin{definition} \emph{Schwarz's locus} $\mathfrak{U}_{j/2}$ is defined by the functor of points $\operatorname{\mathbf{SSch}}_\CC \to \operatorname{\mathbf{Set}}$
\begin{align*}
    \mathfrak{U}_{j/2}(m|n) (S)  \coeq \Big\{L\in\Gr_{j/2}(m|n)(S) \mathrel{\Big|} g L = L^\perp\text{ for some } g\inΓ_\bullet(S)\Big\}.
\end{align*}
\end{definition}

\begin{rem}
For $L\in \mathfrak{U}_{j/2}(m|n)$ and $g\in Γ_{\bullet}$ such that $g L=L^\perp$, we have $g\inΓ^{-(m+n)}_{(1-2j)/2}$.
\end{rem}

One can equivalently define Schwarz's locus as follows. Let $\tilde{\mathfrak{U}}_{j/2}$ be defined by the functor of points
\begin{equation*}
S \mapsto \tilde{\mathfrak{U}}_{j/2}(S) \coeq \Big\{(g,L)\in Γ_{(1-2j)/2} (S) \times \Gr_{j/2}(S) \mathrel{\Big|} g L = L^\perp \Big\}.
\end{equation*}
Then 
\begin{equation*}
\mathfrak{U}_{j/2}(S) = p_2 \left(\tilde{\mathfrak{U}}_{j/2}(S)\right),
\end{equation*}
where $p_2\co Γ_{(1-2j)/2} (S) \times \Gr_{j/2}(S) \to \Gr_{j/2}(S)$ is the projection onto the second factor.

\begin{prop}[{\textcite{Schwarz.1989.fsaums}}]
  The $j$th super Krichever map $\kappa_{j/2} \co \mathfrak{M}_{g,1_\NS^\infty} \to \Gr_{j/2}$ maps the moduli space $\mathfrak{M}_{g,1_\NS^\infty}$ to Schwarz's locus $\mathfrak{U}_{j/2}$,
  \begin{equation*}
 \kappa_{j/2} \co \mathfrak{M}_{g,1_\NS^\infty} \to \mathfrak{U}_{j/2}.
  \end{equation*}
\end{prop}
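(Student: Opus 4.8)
The plan is to unwind the super Krichever map, identify the orthogonal complement of its image through super Serre duality, and then realize that duality as multiplication by a single element of the super Heisenberg group. Recall that for a super Riemann surface $X$ with its NS puncture $p$ and the formal superconformal coordinate $(z|ζ)$ recorded by $\mathfrak{M}_{g,1_\NS^\infty}$, the image $L \coeq \kappa_{j/2}(X)$ is the subspace of $H_{j/2}$ obtained by expanding, in the coordinate $(z|ζ)$, the global $j/2$-differentials on $X$ that are regular away from $p$; that is, $L$ is the boundary expansion of $H^0(X \setminus p, \omega_X^{\otimes j})$, where $\omega_X$ denotes the dualizing (Berezinian) sheaf of $X$, the sheaf of $1/2$-differentials locally generated by $[dz|dζ]$. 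Since Schwarz's locus is defined by the existence of $g \in Γ_\bullet$ with $gL = L^\perp$, it suffices to compute $L^\perp$ and then to exhibit such a $g$.

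For the first step, I would show that $L^\perp$ is again a Krichever image, namely $L^\perp = \kappa_{(1-j)/2}(X)$, the expansion of $H^0(X \setminus p, \omega_X^{\otimes(1-j)})$. One inclusion is the residue theorem: if $s$ is a $j/2$-differential and $t$ a $(1-j)/2$-differential, both regular on $X\setminus p$, then $s\cdot t$ is a section of $\omega_X$ regular off $p$, so its only possible residue is at $p$ and the global residue theorem forces $\oint_{\mathbb{S}^{1|1}} s\, t = 0$; hence $\kappa_{(1-j)/2}(X) \subseteq L^\perp$. Equality is then a bookkeeping statement: by \Cref{perp} the duality map sends $\Gr_{j/2}(m|n)$ to $\Gr_{(1-j)/2}(-m|-n)$ and matches Berezinians, while super Serre duality exchanges the Fredholm indices of the two Krichever images, so the two discrete subspaces have equal virtual dimension and one contains the other. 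This is the supergeometric form of the classical identification $W(X,p,\mathcal{L})^\perp = W(X,p,\mathcal{L}^{-1}\otimes\omega_X)$ for the Krichever map.

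For the second step, the natural candidate for $g$ is the expansion at $p$, in the coordinate $(z|ζ)$, of a meromorphic $(1-2j)/2$-differential $\varpi$ that trivializes $\omega_X^{\otimes(1-2j)}$ on the punctured surface, i.e.\ is holomorphic and nowhere vanishing on $X\setminus p$ with its only zero or pole at $p$. Multiplication by $\varpi$ is then an $\mathcal{O}$-linear isomorphism $\omega_X^{\otimes j}|_{X\setminus p} \xrightarrow{\sim} \omega_X^{\otimes(1-j)}|_{X\setminus p}$, so it carries $H^0(X\setminus p,\omega_X^{\otimes j})$ bijectively onto $H^0(X\setminus p,\omega_X^{\otimes(1-j)})$; passing to boundary expansions gives $g\,L = \kappa_{(1-j)/2}(X) = L^\perp$. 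Because $\varpi$ has invertible leading coefficient at $p$, its expansion lies in $Γ_{(1-2j)/2}$, and tracking the order of vanishing against the virtual dimension places it in $Γ^{-(m+n)}_{(1-2j)/2}$, matching the remark preceding the statement.

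The main obstacle is precisely the existence of the trivializing differential $\varpi$: a nowhere-vanishing meromorphic section of $\omega_X^{\otimes(1-2j)}$ off $p$ exists if and only if $\omega_X^{\otimes(1-2j)} \cong \mathcal{O}_X(kp)$ for some $k$, a condition that fails for a generic line bundle and is exactly what makes the analogous bosonic construction degenerate. I would therefore spend the core of the argument extracting this triviality from the geometry built into $\mathfrak{M}_{g,1_\NS^\infty}$: the NS puncture together with its formal framing, and the distinguished section of $\omega_X^{\otimes(1-2j)}$ generating the super KP flow on which Schwarz's construction rests. Concretely, I would identify $\varpi$ with the expansion of this flow-generating differential and verify that on $X\setminus p$ it is invertible, so that $g$ is the corresponding super Heisenberg element; checking this invertibility, and that the resulting $g$ indeed lands in the asserted component $Γ^{-(m+n)}_{(1-2j)/2}$, is where the real work lies.
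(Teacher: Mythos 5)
Your first two steps are exactly the paper's. The duality computation is isolated there as \cref{explicit-perp}: the inclusion $\pi_*\omega_{X/S}^{1-j}\subset(\pi_*\omega_{X/S}^{j})^\perp$ comes from the residue theorem, and it is upgraded to an equality by a Fredholm-index count, carried out via super Riemann--Roch on closed fibers and then spread over the base by Nakayama's lemma (the one family-level point your fiberwise sketch skips, but that part is routine). The divergence is in your last step, and there the gap is genuine. You propose to produce the trivializing section $\varpi$ of $\omega_X^{\otimes(1-2j)}$ on $X\setminus p$ by identifying it with ``the distinguished section of $\omega_X^{\otimes(1-2j)}$ generating the super KP flow,'' but no such section exists in the data of $\mathfrak{M}_{g,1_\NS^\infty}$: the super KP flow is the action of the super Heisenberg group $\Gamma_\bullet$ on the Grassmannian, generated by the monomials $z^n$ and $z^n\zeta$ on the formal punctured disk, and it singles out no differential on the surface. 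Asserting that $\varpi$ is ``the flow-generating differential'' presupposes an element $g$ with $gL=L^\perp$, i.e.\ exactly what is to be proved; the argument is circular, and the existence question you correctly flag as the main obstacle is left open.

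The paper fills this hole by a different mechanism: it asserts that over a sufficiently small and simple base $S$ (so that $\Pic(S)$ is trivial) the relative Picard group $\Pic_{(X\setminus P)/S}(S)=\Pic(X\setminus P)/\pi^*\Pic(S)$ vanishes, whence $\omega_{X/S}^{j}$ and $\Pi\omega_{X/S}^{1-j}$ are isomorphic over $X\setminus P$; such an isomorphism is an invertible regular section $\Pi g$ of $\Pi\omega_{X/S}^{1-2j}$ over $X\setminus P$, and its Laurent expansion at the puncture is the required element of $\Gamma_{(1-2j)/2}(S)$ (your bookkeeping placing it in $\Gamma^{-(m+n)}_{(1-2j)/2}$ then goes through). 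Note that this Picard-triviality assertion is the entire content of the proposition, and your own bosonic analogy shows it cannot be treated as a routine cleanup: for an ordinary curve $\Pic(C\setminus p)\cong J(C)$, and for a split SRS over a point the even part of $\Pic(X\setminus p)$ is again $\Pic(C\setminus p)$, so on reductions the isomorphism $\omega_X^{j}\cong\Pi\omega_X^{1-j}$ forces $\mathcal{S}^{2j-1}\cong\mathcal{O}_C(kp)$ for the spin bundle $\mathcal{S}$ --- precisely the kind of condition you observe fails for a generic line bundle. So your instinct about where the difficulty sits is sound, but your proposal neither proves the needed triviality nor offers a substitute for it; any completion of your argument must engage that claim (or a restriction to the locus where the corresponding obstruction vanishes) directly, rather than appeal to the KP flow.
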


\begin{proof}
It suffices to show that for a small and simple enough (with a trivial Picard group in the \'{e}tale topology or disk-like in the complex topology) neighborhood $S$ of each point of $\mathfrak{M}_{g,1_\NS^\infty}$, the $S$-point of the Grassmannian $\Gr_{j/2}$ obtained by composing with $\kappa_{j/2}$ is in fact an $S$-point of $\mathfrak{U}_{j/2}$.

An $S$-point of the moduli space $\mathfrak{M}_{g,1_\NS^\infty}$ is represented by a family $X \to S$ of SRSs  with an NS puncture, given by a section $P$, and a formal coordinate at it over the superscheme $S$. The image of $S$ under the super Krichever map $\kappa_{j/2}$ is represented by the $S$-family of subbundles $π_* ω_{X/S}^{ j} \subset \hat{(H_{j/2})}_S$, where $π\co X \setminus P \to S$ is the restriction of the family to $X \setminus P$ and $ω_{X/S} \coeq \Ber \Omega_{X/S }^1$ is the \emph{relative dualizing sheaf}.
    \begin{lemma}[{\textcite{Schwarz.1989.fsaums}}]
    \label{explicit-perp}
    $(π_*ω_{X/S}^{ j})^\perp = π_* ω_{X/S}^{1- j}$.
    \end{lemma}
    \begin{proof}[Proof of Lemma]
        The inclusion $π_* ω_{X/S}^{1- j} \subset (π_* ω_{X/S}^{ j})^\perp$ comes from the observation that the product of local sections of these bundles over $S$ is regular on $X \setminus P$ and therefore its residue at $P$ must be zero, since the sum of residues over all points must vanish \cite{Rosly.Schwarz.Voronov.1988.gosm}. To show that the inclusion is actually an equality, for each closed point $t$ of the base $S$ of the family, consider the special fiber $X_t$ over $k(t) = \CC$. This is an individual SRS over $\CC$. We claim that
        \begin{equation}
        \label{equality}
        Γ(X_t \setminus P(t),  ω_{X_t}^{ j})^\perp = Γ(X_t \setminus P(t),  ω_{X_t}^{1- j}).
\end{equation}
        If this is the case, then by Nakayama's lemma, the same will be true in an open neighborhood of $t$, which will imply the lemma.
        
        To prove \cref{equality}, it suffices to show that the Fredholm indices of $Γ(X_t \setminus P(t),  ω_{X_t}^{j})^\perp$ and $Γ(X_t \setminus P(t),  ω_{X_t}^{1-j})$ as closed points of $\Gr_{j/2}$ are equal. Note that the Fredholm index of  $Γ(X_t \setminus p,  ω_{X_t}^{j})^\perp$ is the negative of that of $Γ(X_t \setminus P(t),  ω_{X_t}^{j})$.  The Fredholm index of $Γ(X_t \setminus P(t),  ω_{X_t}^{j})$ is equal to the Euler characteristic of $ω_{X_t}^{j}$, computed  by the super Riemann-Roch theorem \cite{Rosly.Schwarz.Voronov.1988.gosm} as $χ(X_t, ω_{X_t}^{j}) = (d + 1 - g \, | \,  d) = (j(g-1) + 1 -g \, | \, j(g-1)) = ((j-1)(g-1) \, | \, j(g-1))$ if $j$ is even and $χ(X_t, ω_{X_t}^{j}) = (d \, | \,  d + 1 - g) = (j(g-1) \, | \, (j-1)(g-1)) $ if $j$ is odd. On the other hand, the Fredholm index of $Γ(X \setminus P(t),  ω_{X/S}^{1- j})$ is $χ(X_t, ω_{X_t}^{1-j}) = ((1-j)(g-1) \, | \, -j(g-1) )$ or $χ(X_t, ω_{X_t}^{1-j}) = (-j(g-1) \, | \, (1-j)(g-1))$, respectively. This implies \cref{equality} and finishes the proof of the lemma.
\let\qed\relax
\end{proof}

To deduce the statement of the proposition from the lemma, observe that the line bundles $ω_{X/S}^{j}$ and $Π ω_{X/S}^{1-j}$ are isomorphic over $X\setminus P$, provided $S$ is small and simple enough in the \'{e}tale or complex topology, so as $\Pic (S)$ is trivial. Indeed, the relative Picard group $\Pic_{(X \setminus P) /S}(S) = \Pic (X \setminus P)/ π^* \Pic(S)$ is trivial, and therefore, so is the absolute one, $\Pic (X \setminus P)$.

The isomorphism $ω_{X/S}^{j} \xrightarrow{\sim} Π ω_{X/S}^{1-j}$ is given by an invertible regular section $Π g$ of $Π ω_{X/S}^{1-2j}$ over $X \setminus P$. Since $ω_{X/S}^{1-j} = g \cdot ω_{X/S}^{j}$, we also have $π_* ω_{X/S}^{1-j} = g \cdot π_* ω_{X/S}^{j}$. Finally, \cref{explicit-perp} implies $(π_*ω_{X/S}^{ j})^\perp = g \cdot π_* ω_{X/S}^{j}$, which means that the $S$-point $\kappa_{j/2} (X \to S) = π_*ω_{X/S}^{ j}$ is an $S$-point of Schwarz's locus $\mathfrak{U}_{j/2}$, i.e., $S$ maps to $\mathfrak{U}_{j/2}$ by the super Krichever map.
\end{proof}

\begin{prop}
 Schwarz's locus is preserved under the action of
\begin{enumerate}[$(1)$]
\item the super Heisenberg algebra
\item and the super Witt algebra.
\end{enumerate}
\end{prop}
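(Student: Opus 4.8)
The plan is to prove the stronger statement that the two \emph{group} actions on $\Gr_{j/2}$—that of $Γ^0_{0/2}$ (see \cref{gamma action on Gr}) and that of $\SWitt$ (see \cref{SWitt action on Gr})—preserve Schwarz's locus $\mathfrak{U}_{j/2}$. The assertions for the super Heisenberg algebra $\mathfrak{h}$ and the super Witt algebra $\switt$ then follow at once by passing to the infinitesimal action, since these are precisely the Lie algebras of the above groups (\cref{super heisenberg Lie algebra} and \cref{switt Lie algebra}). The whole argument rests on tracking the witness element $g$ in the defining relation $gL = L^\perp$ under each action, and the crucial input is the invariance of the inner product \eqref{innerproduct} recorded in \cref{invariant inner product}.

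For the super Heisenberg group I would start from an $L \in \mathfrak{U}_{j/2}(m|n)$ with a witness $g \in Γ_\bullet$, i.e.\ $gL = L^\perp$, together with an arbitrary $h \in Γ^0_{0/2}$, and first determine $(hL)^\perp$. Applying the $Γ_\bullet$-invariance $\langle vh^{-1}, hw\rangle = \langle v, w\rangle$ with $u \coeq vh^{-1}$ and using commutativity of $Γ_\bullet$ gives $\langle u, hw\rangle = \langle uh, w\rangle$, so that $u \in (hL)^\perp$ precisely when $uh \in L^\perp$. Hence
\begin{equation*}
(hL)^\perp = h^{-1}L^\perp = h^{-1}gL = \bigl(gh^{-2}\bigr)(hL),
\end{equation*}
and since $gh^{-2} \in Γ_\bullet$ this exhibits $hL$ as a point of $\mathfrak{U}_{j/2}(m|n)$, proving part $(1)$.

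For the super Witt group I would take $φ \in \SWitt$ and the same $L$, and use the $\SWitt$-invariance $\langle φv, φw\rangle = \langle v, w\rangle$ to obtain $\langle u, φw\rangle = \langle φ^{-1}u, w\rangle$, whence $(φL)^\perp = φ(L^\perp) = φ(gL)$. Because $φ$ acts on $H_\bullet$ by algebra automorphisms (\cref{auts}), it is multiplicative, so $φ(gL) = φ(g)\cdot(φL)$, where $φ(g)$ is the image of $g$ under the action \eqref{switt on gamma action} of $\SWitt$ on $Γ_\bullet$. As $φ(g) \in Γ_\bullet$ and $\SWitt$ preserves both the degree and the connected component, $φL$ lies in $\mathfrak{U}_{j/2}(m|n)$ with witness $φ(g)$, proving part $(2)$.

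Each computation is short, and the only delicate point—hence what I regard as the main obstacle—is the bookkeeping forced by the degree shift of the duality map $\perp\colon \Gr_{j/2}(m|n) \to \Gr_{(1-j)/2}(-m|-n)$: one must be careful that the pairings appearing above are applied between $H_{j/2}$ and $H_{(1-j)/2}$, and verify that the produced witnesses $gh^{-2}$ and $φ(g)$ indeed remain in $Γ^{-(m+n)}_{(1-2j)/2}$, in accordance with the earlier observation that the witness of a point of $\mathfrak{U}_{j/2}(m|n)$ lives in that component. I do not expect any further difficulty.
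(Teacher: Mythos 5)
Your proof is correct and follows essentially the same route as the paper: the invariance of the pairing (\cref{invariant inner product}) together with the commutativity of $Γ_\bullet$ handles part $(1)$, and the $\SWitt$-equivariance of the $Γ_\bullet$-action (multiplicativity of $φ$) handles part $(2)$, with the Lie-algebra statements read off from the group actions. If anything, your bookkeeping is more careful than the paper's: with the stated invariance one indeed gets $(hL)^\perp = h^{-1}L^\perp$ and witness $g h^{-2}$, whereas the paper's displayed computation $(g'L_S)^\perp = g'(L_S)^\perp$ with witness $g$ is off by an inversion — a harmless slip, since either witness lies in $Γ_\bullet$ and the conclusion is unchanged.
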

\begin{proof}
\begin{enumerate}[$(1)$]
\item Suppose $g'$ is an element (an $S$-point, to be precise) of $Γ_\bullet$. By definition, for each discrete plane $L_S \subset \hat{(H_{j/2})}_S$ in Schwarz's locus, we have $L_S^\perp = g L_S$ for some $g \in Γ_\bullet(S)$. Then $(g' L_S)^\perp = g' (L_S)^\perp  = g' (g L_S)=g(g'L_S) $ from \cref{invariant inner product} and the commutativity of $Γ_\bullet$. Thus, $g' L_S$ is in Schwarz's locus.
\item Suppose $\phi$ is an element (an $S$-point, to be precise) of $\SWitt$. By definition, for each discrete plane $L_S \subset \hat{(H_{j/2})}_S$ in Schwarz's locus, we have $L_S^\perp = g L_S$ for some $g \in Γ_\bullet(S)$. Then $(\phi L_S)^\perp = \phi (L_S)^\perp  = \phi (g L_S) = \phi(g) \phi (L_S) $ from \cref{invariant inner product,equivariant gamma action}. Thus, $\phi L_S$ is in Schwarz's locus.
\end{enumerate}
\end{proof}

\section{Schwarz's extended super Mumford form}\label{main result section}

Using a simple combination of Sch\-warz's tau functions, then Schwarz's extended super Mumford form is defined.
Recall the action of the super Heisenberg group on the Grassmannian as in \Cref{gamma action on Gr}. This action for Schwarz's locus (in it's cube version) and the duality of \cref{duality section} allow to Schwarz's extended form to be identified as a section $M(L)$ over Schwarz's locus $\mathfrak{U}_{j/2}$.

Out main result is the invariance of the section $M(L)$ under the super Heisenberg group $Γ_\bullet$ and the Neveu-Schwarz group $\NS$.

\begin{prop}
    For $(g,L)$ in $\tilde{\mathfrak{U}}_{j/2}(m|n)$, then Schwarz's tau function with these inputs is a section as follows:
    \begin{equation*}
    \tau_L(g) \in
    \begin{cases}
        H^0\left(\tilde{\mathfrak{U}}_{j/2}, Π^{m-n} p_2^* \Ber^2_{j/2}\right) & j \text{ even},\\
        H^0\left(\tilde{\mathfrak{U}}_{j/2}, Π^{m-n} p_2^* \Ber^{-2}_{j/2}\right) & j \text{ odd}.
    \end{cases}
\end{equation*}
\end{prop}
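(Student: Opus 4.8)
The plan is to take the general membership statement for $τ_L(g)$ in \Cref{schwarz tau def} and specialize it along the two extra constraints that cut out $\tilde{\mathfrak{U}}_{j/2}$, namely that $g$ has degree $i = 1-2j$ and that $gL = L^\perp$. First I would substitute $i = 1-2j$, so that $i+j = 1-j$ and the action map $γ$ lands in $\Gr_{(1-j)/2}$, which is precisely the target of the duality map $\perp$. With this substitution, \Cref{schwarz tau def} says that $τ_L(g)$ is a section, over $Γ_{(1-2j)/2}\times\Gr_{j/2}$, of
\begin{equation*}
Π^{(m-n)(1-2j)} \, γ^* \Ber_{(1-j)/2}^{-(-1)^{1-j}} \otimes \Ber_{j/2}^{(-1)^j},
\end{equation*}
and it only remains to simplify the restriction of this bundle to $\tilde{\mathfrak{U}}_{j/2}$.

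The key step is to use the defining relation of $\tilde{\mathfrak{U}}_{j/2}$ to replace $γ$ by $\perp\circ p_2$. Indeed, every $S$-point $(g,L)$ of $\tilde{\mathfrak{U}}_{j/2}$ satisfies $γ(g,L) = gL = L^\perp = \perp(p_2(g,L))$, so $γ$ and $\perp\circ p_2$ coincide as morphisms out of $\tilde{\mathfrak{U}}_{j/2}$. Pulling the Berezinian back accordingly gives $γ^*\Ber_{(1-j)/2}|_{\tilde{\mathfrak{U}}_{j/2}} = p_2^*\,\perp^*\Ber_{(1-j)/2}$, and \Cref{perp} identifies $\perp^*\Ber_{(1-j)/2} = \Ber_{j/2}$. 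Thus both tensor factors descend, through $p_2$, to powers of $\Ber_{j/2}$: the first becomes $p_2^*\Ber_{j/2}^{-(-1)^{1-j}}$ and the second is already $p_2^*\Ber_{j/2}^{(-1)^j}$.

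Finally I would do the exponent and parity bookkeeping. Since $(-1)^{1-j} = -(-1)^j$, the first exponent is $-(-1)^{1-j} = (-1)^j$, so the two factors combine to $p_2^*\Ber_{j/2}^{2(-1)^j}$, equal to $p_2^*\Ber_{j/2}^{2}$ for $j$ even and $p_2^*\Ber_{j/2}^{-2}$ for $j$ odd. For the parity shift, $1-2j$ is odd, hence $(m-n)(1-2j)\equiv m-n \pmod 2$ and $Π^{(m-n)(1-2j)} = Π^{m-n}$. This reproduces the two cases of the claimed line bundle.

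The part that needs genuine care, rather than mere symbol-pushing, is the functoriality underlying the key step: one must verify that $γ = \perp\circ p_2$ as an identity of natural transformations over an arbitrary base $S$ — not only fiberwise over closed points — so that the canonical isomorphism of \Cref{perp} pulls back compatibly and introduces no stray automorphism of the line bundle. Once this naturality is secured, the rest is the bookkeeping above.
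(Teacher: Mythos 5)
Your proposal is correct and is exactly the argument the paper compresses into its one-line proof (``This follows from \cref{perp} and the definition of Schwarz's tau function''): substitute $i=1-2j$ in \cref{schwarz tau def}, identify $\gamma=\perp\circ p_2$ on $\tilde{\mathfrak{U}}_{j/2}$ from the defining relation $gL=L^\perp$, apply $\perp^*\Ber_{(1-j)/2}=\Ber_{j/2}$, and carry out the sign and parity bookkeeping. The functoriality point you flag is immediate, since the condition $gL=L^\perp$ in the definition of $\tilde{\mathfrak{U}}_{j/2}(S)$ is an equality of $S$-points of $\Gr_{(1-j)/2}$, which is precisely the identity $\gamma=\perp\circ p_2$ of natural transformations.
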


\begin{proof}
    This follows from \cref{perp} and the definition of Schwarz's tau function.
\end{proof}

\begin{definition}
Let $(g,L) \in \tilde{\mathfrak{U}}_{j/2}$.
Then \emph{Schwarz's extended Mumford form} is defined as
\begin{align}
\label{SchwarzMumfordform}
    M(g,L)\coeq \frac{{\tau}_{L}(g^3)}{{\tau}^3_{L}(g)} = \frac{{\tau}_{gL}(g^2)}{{\tau}^2_{L}(g)}.
\end{align}
This formula defines a global section over Schwarz's locus as
\begin{align*}
    M(g,L) \in \begin{cases}
        H^0\left(\tilde{\mathfrak{U}}_{j/2},γ^* \Ber_{(-5j+3)/2} \otimes \big(\Ber^*_{j/2}\big)^5\right) & j \text{ even},\\[0.8em]
        H^0\left(\tilde{\mathfrak{U}}_{j/2}, γ^* \Ber^*_{(-5j+3)/2} \otimes \big(\Ber_{j/2}\big)^5\right) & j \text{ odd},\\
               \end{cases}
\end{align*}
where $γ$ is the “\emph{cube action}” map
\begin{align*}
    γ\co \tilde{\mathfrak{U}}_{j/2} & \to \Gr_{(-5j+ 3)/2},\\
(g,L) & \mapsto g^3 L.
\end{align*}
\end{definition}


We would like to show that $M(g,L)$ is actually independent of $g$ and thus can be considered as a section of a line bundle over $\mathfrak{U}_{j/2}$.

\begin{prop}
The “cube action” map $γ$ factors through the projection $p_2\co \tilde{\mathfrak{U}}_{j/2} \to \mathfrak{U}_{j/2} $, i.e., there is a unique morphism $\hat{γ}$  which makes the following triangle commute:
\begin{equation*}
 \begin{tikzcd}
\tilde{\mathfrak{U}}_{j/2} \arrow{rr}{γ} \arrow{dr}{p_2} & & \Gr_{(-5j+ 3)/2}.\\
& \mathfrak{U}_{j/2} \arrow{ur}{\hat{γ}}
    \end{tikzcd}
\end{equation*}
\end{prop}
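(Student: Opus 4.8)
The plan is to work entirely with functors of points and reduce the existence of $\hat γ$ to a single, essentially group-theoretic, observation: the point $g^3 L$ does not depend on the choice of $g$ witnessing $L \in \mathfrak{U}_{j/2}$. By the definition of Schwarz's locus (together with the remark that any such witness automatically lies in $Γ^{-(m+n)}_{(1-2j)/2}$), for every superscheme $S$ the projection $p_2 \co \tilde{\mathfrak{U}}_{j/2}(S) \to \mathfrak{U}_{j/2}(S)$ is surjective onto $\mathfrak{U}_{j/2}(S)$; indeed $\mathfrak{U}_{j/2} = p_2(\tilde{\mathfrak{U}}_{j/2})$. A set-theoretic assignment $(g,L) \mapsto γ(g,L)$ therefore descends to a map $\hat γ$ on $\mathfrak{U}_{j/2}(S)$ precisely when $γ$ is constant on the fibres of $p_2$, and uniqueness of $\hat γ$ is forced because $p_2$ is an epimorphism.

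So first I would check the fibre condition: suppose $(g, L)$ and $(g', L)$ both lie in $\tilde{\mathfrak{U}}_{j/2}(S)$, i.e.\ $g L = L^\perp = g' L$. Applying $g^{-1}$ and using that $Γ_\bullet$ acts by a genuine group action (\Cref{gamma action on Gr}) gives $h L = L$ for $h \coeq g^{-1} g' \in Γ^0_{0/2}$. By the action axiom this self-equality propagates, $h^2 L = h(hL) = hL = L$ and hence $h^3 L = L$. Now invoke the commutativity of the super Heisenberg group (\Cref{rep-Gamma}): since $g' = g h$ and $g, h$ commute, $(g')^3 = g^3 h^3$, and therefore
\[
  γ(g', L) = (g')^3 L = g^3 (h^3 L) = g^3 L = γ(g, L).
\]
This is exactly the desired independence of the witness.

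Finally I would promote this set-level statement to a morphism of superschemes. Because the identity $g^3 L = (g')^3 L$ holds over every base and is natural in $S$ (the construction is expressed purely through the $Γ_\bullet$-action), the assignment $\hat γ(L) \coeq g^3 L$ is well defined as a natural transformation: where a witness $g$ exists---globally, or after passing to a covering of $S$ on which one exists if one works with the sheafification of the image functor---we set $\hat γ(L) = g^3 L$, and the fibre computation guarantees these definitions are independent of all choices and agree on overlaps, so they glue to a single $S$-point of $\Gr_{(-5j+3)/2}$ depending only on $L$. The main obstacle is thus not the algebra---which collapses to the two lines above---but the bookkeeping required to see that $\hat γ$ is a genuine morphism rather than merely a map on points: one must confirm that a witness $g$ can be chosen compatibly over a covering and that $p_2$ is an epimorphism in the relevant topology, so that descent produces $\hat γ$ and forces its uniqueness.
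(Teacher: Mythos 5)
Your proposal is correct and follows essentially the same route as the paper: both reduce the existence of $\hat{γ}$ to the independence of $g^3L$ from the choice of witness $g$, proved via the commutativity of $Γ_\bullet(S)$ (your $h \coeq g^{-1}g'$ fixing $L$ is a trivially equivalent rearrangement of the paper's $f \coeq g'g^{-1}$ fixing $gL$). Your closing worry about sheafification and coverings is moot, since by the paper's definition an $S$-point of $\mathfrak{U}_{j/2}$ already comes with a witness $g \in Γ_\bullet(S)$ defined over all of $S$, so $p_2$ is surjective on $S$-points by construction and no descent is needed.
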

\begin{proof}
    If $\hat{γ}$ exists, it must work the following way on $S$-points of $\mathfrak{U}_{j/2}$:
\begin{align*}
    \hat{γ}\co \mathfrak{U}_{j/2}(S) & \to \Gr_{(-5j+ 3)/2} (S),\\
L & \mapsto g^3 L,
\end{align*}
where $g$ is such that $gL = L^\perp$. To show that it exists, we need to prove its independence of the choice of $g$: if $L \in \mathfrak{U}_{j/2}(S)$ and $g$, $g'$ are such that $gL = L^\perp$ and $g' L = L^\perp$, then $(g')^3 L = g^3 L$.

Indeed, for $f = g'g^{-1} \in Γ_{0/2}(S)$ we have $fg L = g'L = L^\perp = gL$, which implies $f^3 gL = gL$ and hence $(g')^3 L = g^2 f^3 g L = g^3 L$, where we used the commutativity of the group $Γ_\bullet (S)$.
\end{proof}

\begin{prop}
\label{indep}
Schwarz's extended Mumford form $M(g,L)$ is independent of $g$ and is, in fact, the pullback of a section $M(L)$ of the line bundle $\hat{γ}^* \Ber_{(-5j+3)/2} \otimes \big(\Ber^*_{j/2}\big)^5$, if $j$ is even, or its dual, if $j$ is odd,  on $\mathfrak{U}_{j/2}$:
\begin{equation*}
M(g,L) = p_2^* M(L).
\end{equation*}
\end{prop}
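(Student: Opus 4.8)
The plan is to notice first that the ambient line bundle is \emph{already} a $p_2$-pullback, so that the whole content of the proposition is the independence of the section $M(g,L)$ from the choice of $g$. Indeed, by the previous proposition the cube action factors as $γ = \hat{γ}\circ p_2$, whence
\[
γ^* \Ber_{(-5j+3)/2} \otimes \big(\Ber^*_{j/2}\big)^{5} = p_2^*\Big(\hat{γ}^* \Ber_{(-5j+3)/2} \otimes \big(\Ber^*_{j/2}\big)^{5}\Big)
\]
(with the evident dual in the odd case), using that $\Ber_{j/2}$ over $\tilde{\mathfrak U}_{j/2}$ is itself the $p_2$-pullback of $\Ber_{j/2}$ over $\mathfrak U_{j/2}$. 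Thus it suffices to show that $M(g,L)$ takes the same value for any two $g,g'$ with $gL = L^\perp = g'L$; the section $M(L)$ is then obtained by descent along the epimorphism $p_2$, exploiting that $\mathfrak U_{j/2}=p_2(\tilde{\mathfrak U}_{j/2})$.

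So I would fix $L\in\mathfrak U_{j/2}(m|n)$ together with $g,g'$ satisfying $gL=L^\perp=g'L$, and set $f\coeq g'g^{-1}$. As in the previous proposition $f\in Γ^0_{0/2}$; it commutes with everything since $Γ_\bullet$ is abelian, and it fixes $K\coeq gL=L^\perp$, i.e.\ $fK=K$. Consequently $f$ also fixes $g^{3}L=g^{2}K$, because it commutes with $g^2$. Writing $g'=fg$, $(g')^{3}=f^{3}g^{3}$, and applying the cocycle identity \eqref{taufg} twice yields
\[
M(g',L) = \frac{τ_{g^{3}L}(f^{3})\, τ_{L}(g^{3})}{\big(τ_{gL}(f)\,τ_{L}(g)\big)^{3}} = \frac{τ_{g^{3}L}(f^{3})}{τ_{gL}(f)^{3}}\, M(g,L),
\]
so the claim reduces to the single identity $τ_{g^{3}L}(f^{3}) = τ_{gL}(f)^{3}$.

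To prove this I would first use that $f$ fixes $K=gL$: iterating \eqref{taufg} and repeatedly invoking $f^{k}K=K$ gives $τ_{K}(f^{3})=τ_{K}(f)^{3}$, hence $τ_{gL}(f)^{3}=τ_{gL}(f^{3})$. It then remains only to move the base point from $gL$ to $g^{3}L=g^{2}\cdot gL$, which is a shift by $g^{2}\in Γ_\bullet$. This is exactly the kind of move controlled by the $Γ_\bullet$-invariance of Schwarz's tau function, \Cref{Gamma action on tau}: applied with group element $g^{2}$ and argument $f^{3}$ (and $\Ad_{g^{2}}f^{3}=f^{3}$ by commutativity) it gives $τ_{g^{3}L}(f^{3})=τ_{gL}(f^{3})$, completing the reduction and establishing independence of $g$.

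I expect the only genuine subtlety to lie in this last step. \Cref{Gamma action on tau} is phrased as an invariance of a section of an equivariant line bundle, so I must check that transporting the base point along the $Γ_\bullet$-orbit produces an honest equality of \emph{values} rather than an equality up to a scalar multiplier. This is precisely where the centrelessness of the $Γ_\bullet$-action on the Berezinian bundle, \Cref{Gamma-action}, is indispensable: since $f^{3}$ fixes both $gL$ and $g^{3}L$, each of $τ_{gL}(f^{3})$ and $τ_{g^{3}L}(f^{3})$ is a section of a canonically trivial line bundle, and the absence of any central multiplier guarantees that the lift of $g^{2}$ matches these two trivializations. Once this bookkeeping is in place, the descent of the $g$-independent section $M(g,L)$ to the asserted section $M(L)$ over $\mathfrak U_{j/2}$ is formal.
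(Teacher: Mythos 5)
Your proof is correct, but it takes a different route from the paper's, and the comparison is instructive. The paper exploits the second expression in \eqref{SchwarzMumfordform}, $M(g,L)=\tau_{gL}(g^2)/\tau_L^2(g)$, so that every tau function involved has base point $L$ or $gL$; since $f=g'g^{-1}$ fixes $gL$, two applications of \eqref{taufg} show that numerator and denominator pick up the same factor $\tau_{gL}^2(f)$, which cancels --- no equivariance machinery is needed. You instead work with the cube expression $\tau_L(g^3)/\tau_L^3(g)$, which forces you to compare tau functions at the two distinct base points $gL$ and $g^3L$, and you close that gap by invoking the $Γ_\bullet$-invariance of the tau function (\Cref{Gamma action on tau}) together with the centerless action of \Cref{Gamma-action}; your bookkeeping there is sound (both values lie in the canonically trivial line $\Ber^*(K)\otimes\Ber(K)$ with $K$ fixed by $f^3$, and the transport, being of the form $(A^*)^{-1}\otimes A$, preserves the canonical trivialization --- indeed any scalar multiplier would cancel between the two factors automatically). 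Note, however, that this detour is avoidable: your remaining identity $\tau_{g^3L}(f^3)=\tau_{gL}(f^3)$ follows purely from \eqref{taufg} and commutativity, since
\begin{equation*}
\tau_{g^3L}(f^3)\,\tau_{gL}(g^2)=\tau_{gL}(f^3g^2)=\tau_{gL}(g^2f^3)=\tau_{gL}(g^2)\,\tau_{gL}(f^3),
\end{equation*}
and $\tau_{gL}(g^2)$ is invertible. On the other hand, your opening observation --- that $γ=\hat{γ}\circ p_2$ makes the ambient line bundle a $p_2$-pullback, so that $g$-independence plus descent along the epimorphism $p_2$ yields the section $M(L)$ --- spells out a point the paper leaves implicit, which is a genuine improvement in completeness.
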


Having this statement in mind, by a slight abuse of terminology, we will not make a distinction between $M(L)$ and $M(g,L)$ and apply the term \emph{Schwarz's extended Mumford form} to either, depending on the context.

\begin{proof}
    One can interpret the definition of Schwarz's extended Mumford form as a section of the line bundle $  γ^* \Ber_{(-5j+3)/2} \otimes \big(\Ber^*_{j/2}\big)^5$ on $\tilde{\mathfrak{U}}_{j/2}$.
    What we want to show is that $M(L)$ is independent of $g$.
    If $g' \in Γ_{(1-2j)/2}$ is another element such that $g'L = L^\perp = gL$, then for $ f = g'g^{-1}$, by \eqref{taufg}, we have
    \begin{equation*}
    \frac{{\tau}_{g'L}((g')^2)}{{\tau}^2_{L}(g')}  = \frac{\tau_{gL}^2(f) \tau_{gL}(g^2) }{\tau_{gL}^2 (f){\tau}^2_{L}(g)} = \frac{\tau_{gL}(g^2)}{{\tau}^2_{L}(g)},
    \end{equation*}
    because $(g')^2 = (fg)^2 = g^2 f^2$, $f^2 gL = gL$, and
    \begin{equation*}
    {\tau}_{g'L}((g')^2) = {\tau}_{gL}((g')^2) = \tau_{g L}(g^2)  {\tau}_{gL}(f^2) = \tau_{g L}(g^2)  {\tau}_{gL}^2 (f).
    \qedhere
    \end{equation*}
\end{proof}

At last, we may prove the inavriance of Schwarz's extended super Mumford form under the super Heisenberg action and the Neveu-Schwarz action.

\begin{prop}\label{Gamma mumford invariance}
    Schwarz's extended Mumford form $M(L)$ defined over Schwarz's locus in \eqref{SchwarzMumfordform} is in\-vari\-ant under the action of the super Heisenberg group $Γ_{\bullet}$.
\end{prop}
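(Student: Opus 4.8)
The plan is to deduce the statement directly from the $\Gamma_\bullet$-invariance of the individual super tau functions (\Cref{Gamma action on tau}) together with the independence of $M$ from the choice of $g$ (\Cref{indep}) and the fact that the $\Gamma_\bullet$-action lifts to the Berezinian bundle with no central factor (\Cref{Gamma-action}). First I would unwind the definition of the action on sections. For $f \in \Gamma_\bullet$ the action reads $(\tilde f \tau)_L(h) = \tilde f\bigl(\tau_{f^{-1}L}(\Ad_{f^{-1}}h)\bigr)$; since $\Gamma_\bullet$ is abelian, $\Ad_{f^{-1}}h = h$, so the invariance asserted in \Cref{Gamma action on tau} becomes $\tilde f\bigl(\tau_{f^{-1}L}(h)\bigr) = \tau_L(h)$. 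Relabelling $L \mapsto fL$ gives the form I will actually use,
\begin{equation*}
\tau_{fL}(h) = \tilde f \cdot \tau_L(h),
\end{equation*}
where $\tilde f\cdot$ denotes the lifted action on the Berezinian fibers. By \Cref{Gamma-action} this lift carries no central (projective) multiplier, hence it is a genuine isomorphism of line bundles, compatible with tensor products, duals, and powers.

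Next I would pin down the value of $M$ at a translated plane. If $gL = L^\perp$, then by the preservation of Schwarz's locus, using the commutativity of $\Gamma_\bullet$ and the $\Gamma_\bullet$-invariance of the inner product (\Cref{invariant inner product}), one has $(fL)^\perp = f(L^\perp) = f(gL) = g(fL)$, so the \emph{same} element $g$ satisfies $g(fL) = (fL)^\perp$. Consequently $M(fL)$ may be computed with this same $g$, and substituting the identity above into numerator and denominator yields
\begin{equation*}
M(fL) = \frac{\tau_{fL}(g^3)}{\tau_{fL}^3(g)} = \frac{\tilde f\cdot\tau_L(g^3)}{\bigl(\tilde f\cdot\tau_L(g)\bigr)^3} = \tilde f \cdot M(L),
\end{equation*}
where the last equality uses that $\tilde f\cdot$ commutes with forming ratios and cubes precisely because it has no central anomaly. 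This is exactly the assertion that $M(L)$, viewed as a section of the line bundle of \Cref{indep}, is $\Gamma_\bullet$-invariant.

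The step I expect to require the most care is the line-bundle bookkeeping: I must verify that the lifted action $\tilde f\cdot$ on the composite bundle $\hat{\gamma}^* \Ber_{(-5j+3)/2} \otimes (\Ber^*_{j/2})^5$ in which $M$ lives genuinely agrees with the combination of the lifted actions on the separate tau-function bundles, so that invariance of each $\tau$-factor propagates to the ratio \eqref{SchwarzMumfordform}. This compatibility rests entirely on \Cref{Gamma-action}: because the $\Gamma_\bullet$-lift to the Berezinian bundle exists \emph{without} a one-dimensional center, there is no projective multiplier to obstruct multiplicativity, and the cocycle identity \eqref{taufg} guarantees the algebraic consistency of rewriting $M(g,L)$ in the two equivalent forms of \eqref{SchwarzMumfordform}. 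Once this is checked, the displayed computation is routine.
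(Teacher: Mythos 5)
Your proposal is correct and follows essentially the same route as the paper: the paper's proof is precisely the observation that $M(L)$ is a combination of Schwarz's super tau functions, each of which is $Γ_\bullet$-invariant by \Cref{Gamma action on tau}. Your additional bookkeeping --- checking that the same $g$ serves for $fL$ via preservation of Schwarz's locus, invoking \Cref{indep}, and noting that the absence of a central multiplier (\Cref{Gamma-action}) makes the lifted action compatible with the tensor-product bundle in which $M$ lives --- simply makes explicit what the paper leaves implicit.
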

\begin{proof}
    This result follows from the definition of $M(L)$ as a combination of Schwarz's super tau functions, which are each individually invariant under the action of $Γ_\bullet$ by \cref{Gamma action on tau}.
\end{proof}

\begin{theorem}\label{NS mumford invariance}
    Schwarz's extended Mumford form $M(L)$ defined over Schwarz's locus in \eqref{SchwarzMumfordform}  is in\-vari\-ant under the action of the Neveu-Schwarz group $\NS$.
\end{theorem}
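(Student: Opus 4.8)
The plan is to reduce the claim to the behavior of Schwarz's super tau function under $\NS$, computed in \Cref{NS action on tau}, together with a multiplicativity property of the projective multiplier appearing there. Set
\[
\nu(φ,h)\coeq \frac{C^δ\bigl(φ,\Ad_{φ^{-1}}h\bigr)}{C^δ\bigl(h,φ\bigr)},\qquad h\in Γ_\bullet,
\]
so that \Cref{NS action on tau} reads $(\widetilde{φ}\,\tau)_L(g)=\nu(φ,g^{-1})\,\tau_L(g)$. Because the line bundle carrying $M$ is $\NS$-equivariant as a tensor product and dual of copies of the $\tau$-bundle, and since $(\Ad_{φ^{-1}}g)^3=\Ad_{φ^{-1}}(g^3)$, an element $\widetilde{φ}\in\NS$ acts on $M(g,L)=\tau_L(g^3)/\tau_L^3(g)$ by
\[
\bigl(\widetilde{φ}\,M\bigr)_L(g)=\frac{\nu(φ,g^{-3})}{\nu(φ,g^{-1})^3}\,M_L(g).
\]
Thus it suffices to prove that $h\mapsto\nu(φ,h)$ is a group homomorphism $Γ_\bullet\to\CC^*$, for then $\nu(φ,g^{-3})=\nu(φ,g^{-1})^3$ and the multiplier collapses to $1$.

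To establish multiplicativity, I would first recall from the proof of \Cref{NS action on tau} that $\nu(φ,h)$ is exactly the conjugation multiplier in the central extension,
\[
\widetilde{φ}\,\widetilde{\Ad_{φ^{-1}}h}\,\widetilde{φ}^{-1}=\nu(φ,h)\,\widetilde{h}.
\]
Since conjugation by $\widetilde{φ}$ is an automorphism of $\NS$, I would evaluate it on the product $\widetilde{\Ad_{φ^{-1}}h_1}\,\widetilde{\Ad_{φ^{-1}}h_2}$ in two ways: distributing the conjugation gives $\nu(φ,h_1)\nu(φ,h_2)\,\widetilde{h_1}\widetilde{h_2}$, while first contracting the product via the cocycle gives $C^δ(\Ad_{φ^{-1}}h_1,\Ad_{φ^{-1}}h_2)\,\nu(φ,h_1h_2)\,\widetilde{h_1h_2}$. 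Comparing the two after writing $\widetilde{h_1}\widetilde{h_2}=C^δ(h_1,h_2)\widetilde{h_1h_2}$ yields
\[
\nu(φ,h_1h_2)\,C^δ\bigl(\Ad_{φ^{-1}}h_1,\Ad_{φ^{-1}}h_2\bigr)=\nu(φ,h_1)\,\nu(φ,h_2)\,C^δ(h_1,h_2).
\]

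Here is where the decisive input enters: by \Cref{Gamma-action} the cocycle $C^δ$ restricts trivially to $Γ_\bullet\times Γ_\bullet$, and because the $\SWitt$-action $\Ad_{φ^{-1}}$ preserves $Γ_\bullet$, both cocycle factors equal $1$. Hence $\nu(φ,h_1h_2)=\nu(φ,h_1)\nu(φ,h_2)$, as desired, and the invariance $\bigl(\widetilde{φ}\,M\bigr)_L(g)=M_L(g)$ follows. I would also remark that the index $δ$ is fixed by the component $\Gr_{j/2}(m|n)$ containing $L$ and is therefore common to the arguments $g$ and $g^3$, so that a single homomorphism $\nu(φ,-)$ governs both the numerator and the denominator. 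The \textbf{main obstacle} is precisely the multiplicativity of $\nu(φ,-)$: without the vanishing of the restricted Heisenberg cocycle (\Cref{Gamma-action}), the two cocycle factors would not cancel, the numerator and denominator multipliers would not match, and the argument would break down. The remaining ingredients, namely the equivariance of the underlying bundle and the identity $(\Ad_{φ^{-1}}g)^3=\Ad_{φ^{-1}}(g^3)$, are routine.
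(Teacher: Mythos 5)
Your proof is correct, but it follows a genuinely different route from the paper's. The paper never proves (or needs) multiplicativity of the multiplier $\nu(\phi,-)$: instead it uses the factorization identity \eqref{taufg} to rewrite
\begin{equation*}
M(L)=\frac{\tau_{g^2L}(g)\,\tau_{gL}(g)}{\tau_L^2(g)},
\end{equation*}
so that all four tau-factors carry the \emph{same} argument $g$; by \cref{NS action on tau} each factor then acquires the \emph{same} multiplier (using \cref{mixed cocycle independent of j} and the fact that $\delta$ is constant along the orbit $L$, $gL$, $g^2L$), and the two multipliers in the numerator cancel against the two in the denominator. You instead keep the cube intact, reduce invariance to the identity $\nu(\phi,g^{-3})=\nu(\phi,g^{-1})^3$, and prove that $h\mapsto\nu(\phi,h)$ is a character of $\Gamma_\bullet$ by a conjugation computation in the central extension, with the triviality of the cocycle restricted to $\Gamma_\bullet\times\Gamma_\bullet$ (\cref{Gamma-action}) as the decisive input; this is sound, since $\Ad_{\phi^{-1}}$ preserves $\Gamma_\bullet$ and the cocycle triviality of \cref{Gamma-action} holds for all indices, so both cocycle factors in your comparison equation are indeed $1$. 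What each approach buys: your character argument is slightly more general---it shows at once that $\tau_L(g^k)/\tau_L^k(g)$ is $\NS$-invariant for every $k$, and it makes explicit that the invariance is forced by the cocycle-freeness of the Heisenberg action---whereas the paper's rearrangement is shorter and stays entirely at the level of tau functions, with the cocycle bookkeeping hidden inside \cref{NS action on tau}. Two minor points to tighten: your conjugation computation takes place in $\tilde{\Gamma_\bullet\rtimes\SWitt}$ rather than in $\NS$ itself (the lifts $\tilde{h}$ of Heisenberg elements are not points of $\NS$); and your multiplicativity of $\nu(\phi,-)$ could alternatively be obtained more cheaply by applying $\tilde{\phi}$ to both sides of \eqref{taufg}, which is in effect what the paper's rearrangement exploits.
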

\begin{proof}
    Applying the definition of Schwarz's extended form and factoring the numerator using \cref{taufg} gives:
    \begin{align*}
        M(L) = \frac{τ_{g^2L}(g) \; τ_{gL}(g)}{τ^2_L(g)},
    \end{align*}
    to which the action of $\tilde{φ}\in \NS$ applies as:
    \begin{align*}
        (\tilde{φ} M)(L) &= \frac{(\tilde{φ}{\tau})_{g^2L}(g) \; (\tilde{φ}{\tau})_{gL}(g)}{(\tilde{φ}{\tau})^2_{L}(g)}.
    \end{align*}
      The formula for Neveu-Schwarz action on tau functions is given in \cref{NS action on tau}. Since the value of $δ$ is preserved under the action of $Γ_\bullet$, the value of $δ$ determined by $L$, $gL$, and $g^2L$ are the same, and therefore the cocycles are all $C^δ$ in the resulting formula:
    \begin{align*}
        (\tilde{φ} M)(L) & =
        \left(   \frac{C^δ \left(φ,\Ad_{φ^{-1}}(g^{-1})\right)}{C^δ\left(g^{-1},φ\right)}    \right)^2 \left(\frac{C^δ\left(g^{-1},φ\right)}{C^δ \left(φ,\Ad_{φ^{-1}}(g^{-1})\right)} \right)^2 \cdot \frac{τ_{g^2L}(g) \; τ_{gL}(g)}{{τ}^2_{L}(g)}.
    \end{align*}
    Since the cocycles are otherwise matching, they simply cancel in pairs, which shows the invariance under the action of $\NS$.
\end{proof}


\color{black}

\section{The super Mumford form and Schwarz's extended Mumford form}

For the sake of completeness, it would be good to relate Schwarz's extended Mumford form to the super Mumford on the moduli space $\mathfrak{M}_{g}$ of genus $g$ super Riemann surfaces. Below is a reminder of how the super Mumford isomorphism and form are constructed explicitly, see \cite{Voronov.1988.afftMmist}. More details and a generalization to the punctured case may be found in \cite{Diroff.2019.tsMfitpoRaNSp}. The generalization to rational (meromorphic) sections below is new and will be useful for the proof of \cref{sMform}.

Suppose we have a smooth, proper family $π \co X \to S$ of genus $g$ super Riemann surfaces with $X$ being quasi-projective. This family represents an $S$-point of the supermoduli stack $\mathfrak{M}_{g}$.

\begin{definition}[\cite{Deligne.1988.,Voronov.1988.afftMmist}]
    Let $\mathcal{F}$ be a locally free sheaf on $X$. Then the \emph{Berezinian of cohomology of $\mathcal{F}$} is an invertible sheaf on $S$ given by
    \begin{align*}
        B(\mathcal{F}) \coeq \otimes_i\left(\Ber R^i p_*\mathcal{F}\right)^{(-1)^i},
    \end{align*}
    provided the higher direct images are locally free; otherwise, $B(\mathcal{F})$ may be generalized under these assumptions.
    We define the \emph{Berezinian line bundles $λ_{j/2}$} for the family $π\co X\to S $ as
    \begin{align*}
        \lambda_{j/2}\coeq B(ω _{X/S}^{\otimes j}),
    \end{align*}
    where $ω_{X/S }$ is the relative dualizing sheaf.
\end{definition}

\begin{theorem}[\cite{Deligne.1988.,Voronov.1988.afftMmist}]
\label{sMumfordIso}
    Under the above assumptions, there is a canonical isomorphism of line bundles on $S$, called the \emph{super Mumford isomorphism}:
    \begin{equation*}
    \lambda_{3/2} = \lambda_{1/2}^{5}.
    \end{equation*}
    Moreover, this isomorphism may be given by an explicit formula.
\end{theorem}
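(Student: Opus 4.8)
The plan is to deduce the statement from the stronger linear formula $\lambda_{j/2} = B(\mathcal{O}_X)^{1-2j}$, valid for all $j$; the theorem is then the comparison of the cases $j=3$ and $j=1$, since $\lambda_{1/2} = B(\mathcal{O}_X)^{-1}$ and $\lambda_{3/2} = B(\mathcal{O}_X)^{-5}$ give $\lambda_{3/2} = \lambda_{1/2}^{5}$. The geometric input is that a super Riemann surface carries a superconformal structure, i.e.\ an odd rank $0|1$ distribution $\mathcal{D} \subset T_{X/S}$ with $\mathcal{D}^{\otimes 2} \xrightarrow{\sim} T_{X/S}/\mathcal{D}$; dualizing the extension $0 \to \mathcal{D} \to T_{X/S} \to \mathcal{D}^{\otimes 2} \to 0$ and taking Berezinians yields the canonical identification $\omega_{X/S} = \Ber\,\Omega^1_{X/S} = \mathcal{D}^{-1}$, so that twisting by $\omega$ is twisting by $\mathcal{D}^{-1}$. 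I record the three properties of the Berezinian of cohomology $B$ that drive the argument: multiplicativity on a short exact sequence of locally free sheaves, super Serre duality $B(\mathcal{F}) \cong B(\mathcal{F}^{\vee}\otimes\omega)^{-1}$ up to a parity shift (where the inversion, as opposed to a plain isomorphism, is forced by the $\Pi$-inversion $\Ber\,\Pi V = \Pi^{m-n}(\Ber V)^{-1}$ of \eqref{Ber of Pi complex}), and the twist-by-a-divisor computation expressing $B(\mathcal{L}\otimes\mathcal{O}_X(D)) \otimes B(\mathcal{L})^{-1}$ in terms of data supported on a relative divisor $D$.

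Next I would set up the recursion in $j$ using a \emph{rational} section, which is precisely the ingredient that avoids the need for a global holomorphic section of $\omega$ (recall that neither $\Ber$ nor its dual has global sections). Choose a rational section $\alpha$ of $\omega_{X/S}$; its divisor $(\alpha) = \sum_i n_i [P_i]$ is a $\ZZ$-combination of relative Cartier divisors cut out by sections $P_i\colon S \to X$. Multiplication by $\alpha$ identifies $\omega^{j+1}$ with $\omega^{j}\otimes\mathcal{O}_X((\alpha))$ away from the $P_i$, and the resulting short exact sequences, together with multiplicativity, reduce $B(\omega^{j+1})\otimes B(\omega^{j})^{-1}$ to a tensor product of contributions pulled back from the $P_i$. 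The decisive point, and the place where the super case genuinely departs from Mumford's bosonic computation, is that these contributions assemble into a line bundle on $S$ that is \emph{independent of $j$}: the would-be quadratic-in-$j$ term is $\tfrac{1}{2}j^{2}\,\pi_{*}\!\left(c_1(\mathcal{D})^{2}\right)$, which vanishes because the even relative dimension of $X/S$ is $1$, so the push-forward of a purely even codimension-$2$ class is zero — concretely, the cancellation between the even and odd parts of the divisor contribution. Granting this, induction on $j$ shows $B(\omega^{j}) = B(\mathcal{O}_X)^{a+bj}$ is \emph{linear} in $j$.

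It then remains to pin down the two constants. The base case $j=0$ gives $B(\omega^{0}) = B(\mathcal{O}_X)$, hence $a=1$. Applying super Serre duality to $\mathcal{F} = \omega^{j}$ gives $B(\omega^{1-j}) \cong B(\omega^{j})^{-1}$ up to parity; comparing exponents of $B(\mathcal{O}_X)^{1+b(1-j)}$ and $B(\mathcal{O}_X)^{-(1+bj)}$ for all $j$ forces $b = -2$. This yields $\lambda_{j/2} = B(\mathcal{O}_X)^{1-2j}$ and therefore $\lambda_{3/2} = \lambda_{1/2}^{5}$. For the \emph{explicit} isomorphism asserted in the theorem, I would unwind the chain of identifications built from $\alpha$: each twist-by-divisor isomorphism is given by residues of $\alpha$ against a chosen formal coordinate at the relevant $P_i$, and composing them produces an explicit trivialization of $\lambda_{3/2}\otimes\lambda_{1/2}^{-5}$; independence of the auxiliary choices follows because changing $\alpha$ alters $(\alpha)$ by the divisor of a rational function, whose residue contributions cancel by the residue theorem \cite{Rosly.Schwarz.Voronov.1988.gosm}.

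I expect the main obstacle to be two intertwined points. First, proving the vanishing of the $j$-quadratic contribution in a way that is canonical and valid in families over an arbitrary base superscheme $S$, i.e.\ realizing the collapse of Mumford's quadratic exponent by an honest isomorphism of line bundles rather than a numerical Chern-class identity; this is exactly the work that the rational-section construction, as opposed to an abstract $K$-theoretic argument, must do. Second, the consistent bookkeeping of parities and the attendant $\Pi$-shifts coming from Berezinians of odd line bundles (as in \eqref{Ber of Pi complex}), where sign and parity errors are easy to make and where, as noted above, the $\Pi$-inversion is what turns the naive Serre-duality isomorphism into the inverse needed to force $b=-2$.
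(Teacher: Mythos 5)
Your reduction rests on two claims that are both false, and your final relation comes out right only because the two errors cancel. First, super Serre duality does \emph{not} invert the Berezinian of cohomology: since $\Ber(V^*)=(\Ber V)^{-1}$ and $H^0$, $H^1$ enter $B(\mathcal{F})$ with opposite exponents, the two inversions cancel and one gets a plain isomorphism $B(\mathcal{F})\cong B(\omega_{X/S}\otimes\mathcal{F}^\vee)$ --- no inverse, no $\Pi$-forcing. In particular $\lambda_{1/2}=\lambda_{0/2}=B(\mathcal{O}_X)$, which is exactly the duality the paper uses as its last step (and which also appears as $\perp^*\Ber_{(1-j)/2}=\Ber_{j/2}$ in \cref{perp}); this contradicts your $\lambda_{1/2}=B(\mathcal{O}_X)^{-1}$. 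Second, the linear ansatz $\lambda_{j/2}=B(\mathcal{O}_X)^{1-2j}$ is false: the correct exponents, all derivable from the paper's own proof using $\lambda_{0/2}=\lambda_{1/2}$, are $e_0=1$, $e_1=1$, $e_2=-3$, $e_3=5$, i.e.\ $\lambda_{j/2}=\lambda_{1/2}^{(-1)^{j+1}(2j-1)}$, with alternating sign --- not linear in $j$. Indeed, combining your linear ansatz with the \emph{correct} (uninverted) duality forces $b=0$, which immediately contradicts $\lambda_{2/2}=\lambda_{1/2}^{-3}$; so the ansatz cannot be rescued by repairing the duality.

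The source of the sign alternation is precisely the $\Pi$-inversion you quote, but deployed at the wrong spot. Because $\omega_{X/S}$ is an \emph{odd} line bundle and $\alpha$ is an odd section, multiplication by $\alpha$ is parity-reversing: $\omega^{j+1}\cong\Pi\bigl(\omega^{j}\otimes\mathcal{O}_X(\mathrm{div}\,\alpha)\bigr)$, and $B(\Pi\,\cdot\,)$ inverts $B(\cdot)$. So each step of your recursion inverts the Berezinian: the divisor contributions control the \emph{product} $B(\omega^{j+1})\otimes B(\omega^{j})$, not the ratio $B(\omega^{j+1})\otimes B(\omega^{j})^{-1}$ --- this is visible in \eqref{B(s)}, whose two cases land in $B(\mathcal{L}\otimes\mathcal{M})^{-1}$ or $B(\mathcal{L}\otimes\mathcal{M})$ according to the parity of $\mathcal{M}$. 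This parity mechanism, together with the cancellation of the divisor terms via Principle \eqref{principleS}, is the actual super replacement of Mumford's quadratic exponent; your Chern-class heuristic is not (classically the relative dimension is also $1$, yet $\pi_*(c_1(\omega)^2)$ is nonzero and is exactly what produces the quadratic term $6j^2$). Your conclusion $\lambda_{3/2}=\lambda_{1/2}^{5}$ survives only because replacing $e_j$ by $-e_j$ simultaneously at the two odd values $j=1,3$ preserves that one relation. By contrast, the paper needs no induction and no formula for general $j$: from a single odd rational section $s$ of $\omega_{X/S}$ it forms $B_0(s)$, $B_1(s)$, $B_2(s)$, cancels all divisor contributions in the ratios $B_0(s)/B_1(s)$ and $B_0(s)/B_2(s)$, obtains the trivializing section $\mu(s)$ of $\lambda_{3/2}\otimes\lambda_{1/2}^{-3}\otimes\lambda_{0/2}^{-2}$, and concludes with $\lambda_{1/2}=\lambda_{0/2}$.
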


\begin{proof}
    Working locally on $S$, if we take a global odd rational  section $s$ of the odd line bundle $ω_{X/S}$ such that the reduction $s_\red$ is nonzero, then the Berezinian of $s$ acting on cohomology defines an invertible global regular section $B_0 (s)$ of $\lambda_{1/2}^{-1} \otimes \lambda_{0/2}^{-1} \otimes \mc{N}$, where $\mc{N}$ is a certain line bundle\footnote{$\mc{N}$ is, up to the parity change $Π^{g-1}$, the Berezinian of cohomology of the virtual coherent sheaf on $X$ obtained by restricting $ω_{X/S}$ to the divisor of $s$, the word virtual referring to taking linear combinations of the restrictions with multiplicities prescribed by the divisor.} on $S$, as well as a similar section $B_1 (s)$ of $\lambda_{2/2} \otimes \lambda_{1/2} \otimes \mc{N}$ and a similar section $B_2 (s)$ of $\lambda_{3/2}^{-1} \otimes \lambda_{2/2}^{-1} \otimes \mc{N}$.
    These sections come from the following construction.

    If $s$ is an even regular section  of an even line bundle $\mc{L}$ with a nonzero $s_\red$ such that $\div1 s_\red$ has only simple zeroes, then the short exact sequence of sheaves
    \begin{equation}
        \label{SES}
        0 \to \OO_X \xrightarrow{s} \mc{L} \to \mc{L}|_{\div1 s} \to 0
    \end{equation}
    yields an isomorphism
    \begin{equation}
        \label{Bs}
        B_{\OO} (s)\co B(\OO_X) \otimes B(\mc{L}|_{\div1 s}) \to B(\mc{L}).
    \end{equation}

    If $s$ is rational, then the same story works due to the following trick. Let $\div1 s = D_1 - D_2$, where $D_1$ and $D_2$ are the divisors of zeros and poles of $s$, respectively. Let us assume the reductions of these divisors are simple for the time being. Let $s_1$ and $s_2$ be the canonical sections of $\OO_X (D_1)$ and $\OO_X (D_2)$, respectively, chosen in such a manner that $s = s_1 / s_2$. Note that $\mc{L} = \OO_X (D_1 - D_2)$. As before, the short exact sequences
    \begin{equation*}
    0 \to \OO_X (-D_2) \xrightarrow{s_2} \OO_X \to \OO_X |_{D_2} \to 0
    \end{equation*}
    and
    \begin{equation*}
    0 \to \OO_X (-D_2) \xrightarrow{s_1} \OO_X (D_1-D_2) \to \OO_X (D_1-D_2)|_{D_1} \to 0
    \end{equation*}
    yield isomorphisms
    \begin{equation*}
    B_{-D_2}(s_2)\co B(\OO_X(-D_2)) \otimes B(\OO_X |_{D_2}) \to B(\OO_X)
    \end{equation*}
    and
    \begin{equation*}
    B_{-D_2}(s_1)\co B(\OO_X(-D_2)) \otimes B(\OO_X (D_1 - D_2) |_{D_1}) \to B(\OO_X (D_1 - D_2)),
    \end{equation*}
    respectively. Then the section $B_{-D_2}(s_1)/B_{-D_2}(s_2)$ gives an isomorphism
    \begin{equation*}
    B(\OO_X) \otimes B(\OO_X |_{D_2})^{-1} \otimes B(\OO_X (D_1 - D_2) |_{D_1}) \to B(\OO_X (D_1 - D_2)),
    \end{equation*}
    which we denote by $B_0(s)$. In \cite{Voronov.1988.afftMmist}, the following principle was established:
    \begin{gather}
        \label{principleS}
        \text{The $B(\MM|_{D})$'s are canonically isomorphic for all even line}\\
        \nonumber
        \text{bundles $\MM$ and a fixed effective divisor $D$ with no multiplicities.}
    \end{gather}
    Thus, $B(\OO_X |_{D_2}) = B(\OO_X (D_1 - D_2) |_{D_2})$ and, therefore, $B(\OO_X |_{D_2})^{-1} \otimes B(\OO_X (D_1 - D_2) |_{D_1}) \linebreak[0] = B(\OO_X (D_1 - D_2) |_{D_2})^{-1} \otimes B(\OO_X (D_1 - D_2) |_{D_1})$, which we can combine into
    \begin{equation*}
    B(\OO_X (D_1 - D_2) |_{D_1 - D_2}) \coeq B(\OO_X (D_1 - D_2) |_{D_2})^{-1} \otimes B(\OO_X (D_1 - D_2) |_{D_1}).
    \end{equation*}
    Recalling that $\mc{L} = \OO_X (D_1 - D_2)$ and $\div1 s = D_1 - D_2$, we get the same isomorphism \eqref{Bs} for $s$ being rational with simple zeros and poles. If $\div1 s$ has multiplicities, i.e., $\div1 s = \sum_P n_P P$, $P$ being prime divisors and $n_P \in \ZZ$, then we will have to interpret $\mc{L}|_{\div1 s}$ as a virtual coherent sheaf $\sum_P n_P \mc{L}|_P$ and the factor $B(\mc{L}|_{\div1 s})$ will be identified as $\bigotimes_P B(\mc{L}|_P)^{\otimes n_P}$. In reality, each $n_P \mc{L}|_P$'s for $\abs{n_P} > 1$ arises as an extension by $\mc{L}|_P$ tensored with powers of the conormal bundle of the divisor $P$, but these factors may be ignored due to Principle \eqref{principleS} and extensions may be converted to direct sums because of the multiplicativity of the Berezinian of cohomology functor $B(-)$.

    If $s$ is an odd rational section of an odd line bundle $\mc{L}$ with a nonzero $s_\red$, then the same argument yields an isomorphism
    \begin{equation*}
    B_\OO (s)\co B(\OO_X) \otimes B(\mc{L}|_{\div1 s})^{-1} \to Π^{g-1} B(\mc{L})^{-1}.
    \end{equation*}
    Here $g-1$ is, by super Riemann-Roch, the super Euler characteristic of $\mc{L}$: $sχ(\mc{L}) = \sdim H^0(\mc{L}) - \sdim H^1(\mc{L}) = g-1$, where the superdimension $\sdim$ is the difference between the even and odd dimensions. Thus, $B_0 (s)$ is a trivializing section of $Π^{g-1} B(\mc{L})^{-1} \otimes B(\OO_X)^{-1} \otimes  B(\mc{L}|_{\div1 s})$. If we tensor the short exact sequence \Cref{SES} by a line bundle $\MM$, we will get, for an odd $\mc{L}$, an isomorphism
    \begin{align}\label{B(s)}
    	B_{\MM} (s)\co
        \begin{cases}
            B(\MM) \otimes B((\mc{L} \otimes \MM)|_{\div1 s})^{-1} \to Π^{g-1} B(\mc{L} \otimes \MM)^{-1} & \text{for }\mathcal{M} \text{ even},\\[0.4em]
            Π^{g-1} B(\MM)^{-1} \otimes B((\mc{L} \otimes \MM)|_{\div1 s}) \to  B(\mc{L} \otimes \MM) & \text{for }\mathcal{M} \text{ odd}.
        \end{cases}
    \end{align}

    Taking into account Principle \eqref{principleS} along with the isomorphism
    \begin{equation*}
    B(Π \MM|_D) = B(\MM|_D)^{-1},
    \end{equation*}
    observe that for $\mathcal{L}$ odd, the section $B_{\MM} (s) / B_\OO (s)$ provides an isomorphism
    \begin{align*}
    B_{\MM} (s) / B_\OO (s)\co
    \begin{cases}
    	B(\mc{L})^{-1} \otimes B(\MM) \otimes B(\OO_X)^{-1} \to  B(\mc{L} \otimes \MM)^{-1} & \text{for }\mathcal{M} \text{ even},\\[0.4em]
        B(\mc{L})^{-1} \otimes B(\MM)^{-1} \otimes B(\OO_X)^{-1} \to  B(\mc{L} \otimes \MM) & \text{for }\mathcal{M} \text{ odd}.
    \end{cases}
\end{align*}

    If we use $ω_{X/S}$ for $\mc{L} $, $ω_{X/S}$ and $ω_{X/S}^2$ for $\MM$, and $B_j(s)$ for $B_{ω^j}(s)$,
    we conclude that the section $B_0 (s)/B_1 (s)$
    gives a trivializing section of
    \begin{equation*}
    \lambda_{2/2}^{-1} \otimes \lambda_{1/2}^{-2} \otimes \lambda_{0/2}^{-1}.
    \end{equation*}
    Similarly, $B_0 (s) / B_2 (s)$ gives a trivializing section of
    \begin{equation*}
    \lambda_{3/2} \otimes \lambda_{2/2} \otimes \lambda_{1/2}^{-1} \otimes \lambda_{0/2}^{-1}.
    \end{equation*}
    Hence, the product
    \begin{equation}
        \label{MumfordFormula}
        \mu(s) \coeq \frac{B_0 (s)}{B_1 (s)} \cdot \frac{B_0 (s)}{B_2 (s)}
    \end{equation}
    produces a trivializing section of
    \begin{equation*}
    \lambda_{3/2} \otimes \lambda_{1/2}^{-3} \otimes \lambda_{0/2}^{-2}.
    \end{equation*}
    Finally, combining this with Serre duality
    \begin{equation*}
    \lambda_{1/2} = \lambda_{0/2},
    \end{equation*}
    we get a trivializing section $\mu(s)$ of
    \begin{equation*}
    \lambda_{3/2} \otimes \lambda_{1/2}^{-5}.
    \end{equation*}
    The section $\mu = \mu(s)$ does not depend on $s$. The argument is similar to the one used to prove \Cref{indep}. This canonical section is called the \emph{super Mumford form}.
\end{proof}

\begin{theorem}
    \label{sMform}
    For $j = 0$, Schwarz's extended Mumford form, defined in \eqref{SchwarzMumfordform}, restricts to the usual super Mumford form on $\mathfrak{M}_{g,1_\NS^\infty}$, where the moduli space of SRS is embedded into Schwarz's locus in the super Sato Grassmannian by the 0\textsuperscript{th} Krichever map $\kappa_{0/2} \co \mathfrak{M}_{g,1_\NS^\infty} \to \mathfrak{U}_{0/2} \subset \Gr_{0/2}$:
    \begin{align*}
        \kappa_{0/2}^* M =  \mu.
    \end{align*}
\end{theorem}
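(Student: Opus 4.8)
The plan is to compute $\kappa_{0/2}^*M$ directly and match it on the nose with the explicit expression \eqref{MumfordFormula} for $\mu$. Let $\pi\colon X\to S$ represent an $S$-point of $\mathfrak{M}_{g,1_\NS^\infty}$, so that $L\coeq\kappa_{0/2}(X\to S)=\pi_*\OO_{X\setminus P}=\pi_*\omega_{X/S}^{0}$. By \cref{explicit-perp}, $L^\perp=\pi_*\omega_{X/S}$, and the element $g\in\Gamma_{1/2}$ with $gL=L^\perp$ is exactly the image in the Grassmannian of the rational section $s$ of $\omega_{X/S}$ appearing in the proof of \cref{sMumfordIso}; thus Schwarz's $g$ and Mumford's $s$ coincide. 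Since $M(L)$ is independent of $g$ by \cref{indep} and $\mu=\mu(s)$ is independent of $s$, it suffices to verify the identity for one common choice $s=g$.

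First I would set up the dictionary $\kappa_{j/2}^*\Ber_{j/2}=\lambda_{j/2}$. At $L=\pi_*\omega_{X/S}^{j}$ the perfect complex $0\to L\oplus\hat{(H^+_{j/2})}_S\to\hat{(H_{j/2})}_S\to 0$ defining $\Ber_{j/2}$ is, through the Mayer--Vietoris sequence for the cover of $X$ by $X\setminus P$ and the formal disk at $P$, canonically the two-term complex computing $R\pi_*\omega_{X/S}^{j}$: its kernel $L\cap\hat{(H^+_{j/2})}_S$ is $R^0\pi_*\omega_{X/S}^{j}$ and its cokernel $\hat{(H_{j/2})}_S/(L+\hat{(H^+_{j/2})}_S)$ is $R^1\pi_*\omega_{X/S}^{j}$. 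Taking Berezinians yields a canonical, scalar-free identification $\Ber_{j/2}(L)=\Ber R^0\pi_*\omega_{X/S}^{j}\otimes(\Ber R^1\pi_*\omega_{X/S}^{j})^{-1}=B(\omega_{X/S}^{j})=\lambda_{j/2}$.

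The core step is to recognize each tau-factor of $M$ as a Berezinian of cohomology. Using \eqref{taufg} I would factor
\begin{equation*}
M(g,L)=\frac{\tau_L(g^3)}{\tau_L^{3}(g)}=\frac{\tau_{g^2L}(g)\,\tau_{gL}(g)}{\tau_L^{2}(g)},
\end{equation*}
so each factor is $\tau_{g^mL}(g)$, the Berezinian of the isomorphism $\times g^{-1}\colon\pi_*\omega_{X/S}^{m+1}\to\pi_*\omega_{X/S}^{m}$. Under Step 1 this is the Berezinian of the cohomology comparison between $\lambda_{m/2}$ and $\lambda_{(m+1)/2}$ induced by multiplication by $s$; since $\times s$ and $\times g^{-1}=\times s^{-1}$ are mutually inverse away from $\operatorname{div}s$, the divisor-free part of this comparison is precisely $B_m(s)^{-1}$, where $B_m(s)=B_{\omega^m}(s)$ in the notation of \cref{sMumfordIso}. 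The line-bundle bookkeeping confirms this: via Step 1 the sections $\tau_L(g)$, $\tau_{gL}(g)$, $\tau_{g^2L}(g)$ live in $\lambda_{1/2}\lambda_{0/2}$, $\lambda_{2/2}^{-1}\lambda_{1/2}^{-1}$, $\lambda_{3/2}\lambda_{2/2}$, matching $B_0(s)^{-1}$, $B_1(s)^{-1}$, $B_2(s)^{-1}$ up to the common divisor factor $\mathcal{N}=B(\,\cdot\,|_{\operatorname{div}s})$.

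Finally I would assemble. Writing $\tau_{g^mL}(g)=B_m(s)^{-1}\,n_m$ with $n_m$ the divisor contribution, a section of $\mathcal{N}$, gives
\begin{equation*}
\kappa_{0/2}^*M=\frac{\tau_{g^2L}(g)\,\tau_{gL}(g)}{\tau_L^{2}(g)}=\frac{B_0(s)^2}{B_1(s)\,B_2(s)}\cdot\frac{n_2\,n_1}{n_0^2}.
\end{equation*}
Principle \eqref{principleS} identifies $n_0,n_1,n_2$ with the single canonical section of $\mathcal{N}$, so $n_2n_1/n_0^2=1$ and the right-hand side is exactly $\mu(s)=(B_0(s)/B_1(s))(B_0(s)/B_2(s))$, which after Serre duality $\lambda_{1/2}=\lambda_{0/2}$ trivializes $\lambda_{3/2}\lambda_{1/2}^{-5}$; hence $\kappa_{0/2}^*M=\mu$. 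The main obstacle is the constant-free matching in Step 2: verifying that the abstract Grassmannian section $\tau_{g^mL}(g)$ equals the inverse of the divisor-free part of $B_m(s)$ exactly, not merely up to a scalar, and that the divisor factors cancel as sections via \eqref{principleS}. This canonicity---together with careful tracking of the parity shifts $\Pi^{m-n}=\Pi^{g-1}$ in both constructions---is precisely what upgrades Schwarz's ``up to a constant'' statement to the equality $\kappa_{0/2}^*M=\mu$.
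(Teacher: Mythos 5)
Your proposal is correct and takes essentially the same route as the paper's own proof: the same Čech/Mayer--Vietoris identification $\kappa_{j/2}^*\Ber_{j/2}=\lambda_{j/2}$, the same recognition of each factor $\tau_{g^m L}(g)$ as the inverse of $B_m(s)$, and the same assembly via \eqref{taufg} into the formula \eqref{MumfordFormula} for $\mu$. The step you flag as the main obstacle is precisely what the paper's internal lemma establishes --- via a quasi-isomorphism of completed Čech complexes it obtains the on-the-nose equality \eqref{Bandtau}, with the divisor factor $B(\omega_{X/S}^{j+1}|_{\operatorname{div} s})$ absorbed into the line-bundle identification \eqref{Berandlambda} rather than cancelled afterwards as separate sections $n_m$ of $\mc{N}$.
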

Here the super Mumford form $\mu$ on $\mathfrak{M}_{g,1_\NS^\infty}$ is just the pullback of the super Mumford form on $\mathfrak{M}_{g}$ via the map $\mathfrak{M}_{g,1_\NS^\infty} \to \mathfrak{M}_g$ forgetting the puncture and the formal coordinate at it.
\begin{proof}
    First, let us relate the sections $B_j(s)$ from the construction of \Cref{MumfordFormula} with Schwarz's super tau function. For $j \in \ZZ$, the Berezinian line bundle $\lambda_{j/2}$ on the moduli space $\mathfrak{M} = \mathfrak{M}_{g,1_\NS^\infty}$ is defined as the Berezinian of cohomology of the $j$\textsuperscript{th} power $ω^j_{X/\mathfrak{M}}$ of the relative dualizing sheaf of the universal SRS $X \to \mathfrak{M}$. This cohomology, locally on $\mathfrak{M}$, may be computed via the Čech complex
    \begin{equation*}
        0 \to   ω^j_{X/\mathfrak{M}} (U) \oplus ω^j_{X/\mathfrak{M}}(V) \to ω^j_{X/\mathfrak{M}}(U\cap V) \to  0
    \end{equation*}
    for $U$ being the complement to the puncture in the family of underlying Riemann surfaces and $V$ being a neighborhood of the puncture. On the moduli space $\mathfrak{M}$, we can use the even formal coordinate $z$ near the puncture and pass to completion in the $z$-adic topology, which will transform the above Čech complex to the familiar complex, cf.\ \Cref{Ber-l-b},
    \begin{equation*}
        0 \to   L \oplus H_{j/2}^+ \to H_{j/2} \to  0
    \end{equation*}
    without affecting cohomology. Here $L \coeq ω^j_{X/\mathfrak{M}}(U)$. Now, given a family $π\co X \to S$ of SRSs with а puncture, formal coordinates at the puncture, and a global rational section $s$ of $ω_{X/S}$, i.e., a section of $π_* ω_{X/S}$ rational along the fibers of $π$,  with $s_{\red} \ne 0$ we can write the restriction of $s$ to $U \cap V$ in the given formal coordinates $(z | ζ)$, which will result in a formal Laurent series $ s = s(z | ζ) [dz | dζ] \in Γ_{1/2}(S)$ after completion. If $s$ happens to be regular, then it acts on the Berezinian of cohomology via
    \begin{align} \label{B(s)j}
        B_j(s)\co
        \begin{cases}
        	B(ω^j_{X/S}) \otimes B(ω^{j+1}_{X/S}|_{\div1 s})^{-1}\to Π^{g-1} B(ω^{j+1}_{X/S})^{-1} & \text{for } j \text{ even},\\[0.8em]
            Π^{g-1} B(ω^j_{X/S})^{-1} \otimes B(ω^{j+1}_{X/S}|_{\div1 s})\to B(ω^{j+1}_{X/S}) &  \text{for }j \text{ odd},
        \end{cases}
    \end{align}
    see \cref{B(s)}. If $s$ is rational along the fibers of $π\co X \to S$, then the argument below may be easily generalized using the argument after \eqref{Bs}, so we will concentrate on the case when $s$ is regular. In the following lemma, let $p_1$ denote projection to the first factor.

    \begin{lemma}
        For a global section $s$ of $π_* ω_{X/S}$ with $s_\red \ne 0$, regarded as an element of $s \in Γ_{1/2}(S)$, we have canonical isomorphisms of line bundles
        \begin{equation*}
        \kappa_{j/2}^* \Ber_{j/2} = \lambda_{j/2},
        \end{equation*}
        where $\kappa_{j/2} \co S \to \Gr_{j/2}$ is the super Krichever map,  and
        \begin{equation}
            \label{Berandlambda}
            (\kappa_{j/2}, \id_S)^* s^* p_1^* \Ber_{(j+1)/2} = B(ω_{X/S}^{j+1}|_{\div1 s})^{-1} \otimes \lambda_{(j+1)/2},
        \end{equation}
        where the map $s \co \Gr_{j/2} \times S \to \Gr_{(j+1)/2} \times S$ is the $Γ_{1/2}(S)$-action map. Moreover,
        the Berezinian $B_j(s) \linebreak[1] \in \linebreak[0] H^0 (S, \lambda_{j/2} \otimes B(ω_{X/S}^{j+1}|_{\div1 s})^{-1} \otimes \lambda_{(j+1)/2})$ and Schwarz's super tau function  $\tau_{sL} (s^{-1}) \linebreak[2] \in \linebreak[1] H^0 (\Gr_{j/2} \times S, \linebreak[0] p_1^* \Ber_{j/2} \otimes s^* p_1^* \Ber_{(j+1)/2})$, are related as follows:
        \begin{equation}
            \label{Bandtau}
            B_j (s) = (\kappa_{j/2}, \id_S)^* \tau_{sL} (s^{-1}) = (\kappa_{j/2}, \id_S)^* \tau_{L}^{-1} (s).
        \end{equation}
    \end{lemma}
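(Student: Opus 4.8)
The strategy is to prove the claimed identities one after another, each time transporting a Berezinian of cohomology on $S$ to a Berezinian of an addition complex on the Grassmannian by means of the super Krichever map. The key geometric input, recalled just above, is that $\kappa_{j/2}$ sends $S$ to the discrete subbundle $L = π_*ω_{X/S}^j$ with $π\co X\setminus P\to S$, and that covering $X$ by $U = X\setminus P$ and a formal neighborhood $V$ of the puncture and completing in the $z$-adic topology turns the Čech complex of $ω_{X/S}^j$ into
\begin{equation*}
0 \to L \oplus \hat{(H^+_{j/2})}_S \to \hat{(H_{j/2})}_S \to 0
\end{equation*}
without affecting cohomology, so that this complex computes $Rπ_*ω_{X/S}^j$.

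For the first identity I would argue that $\kappa_{j/2}^*\Ber_{j/2}$ is, by \Cref{Ber-l-b}, exactly the Berezinian of the displayed perfect complex. Since the Berezinian of a perfect complex is the alternating tensor product of the Berezinians of its cohomology sheaves, and those cohomology sheaves are the higher direct images defining $\lambda_{j/2} = B(ω_{X/S}^j)$, this gives the canonical identification $\kappa_{j/2}^*\Ber_{j/2} = \lambda_{j/2}$.

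For the second identity, the $Γ_{1/2}(S)$-action map sends $L$ to $sL = s\cdot π_*ω_{X/S}^j$. As $s$ is regular with $P\notin\div1 s$ (its leading coefficient at the puncture is invertible, since $s\inΓ_{1/2}(S)$), we have $sL \subseteq L' \coeq π_*ω_{X/S}^{j+1}$, the Krichever point in degree $j+1$, and multiplication by $s$ realizes the short exact sequence $0\to ω_{X/S}^j \xrightarrow{s} ω_{X/S}^{j+1} \to ω_{X/S}^{j+1}|_{\div1 s}\to 0$ at the level of addition complexes. The inclusion of the complexes for $sL$ and for $L'$ has quotient $L'/sL = π_*(ω_{X/S}^{j+1}|_{\div1 s})$, so comparing their Berezinians produces the defect $B(ω_{X/S}^{j+1}|_{\div1 s})$ and yields \eqref{Berandlambda}. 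For the third identity I would note that, by \Cref{schwarz tau def}, $τ_{sL}(s^{-1})$ is the Berezinian of the multiplication-by-$s$ isomorphism of addition complexes relating $\Ber_{j/2}(L)$ to $\Ber_{(j+1)/2}(sL)$; under the identifications of the first two steps this is the very Berezinian of the multiplication-by-$s$ map on cohomology defining $B_j(s)$ through the same short exact sequence, so naturality of the passage ``Berezinian of a complex equals Berezinian of its cohomology'' gives $(\kappa_{j/2},\id_S)^*τ_{sL}(s^{-1}) = B_j(s)$. The remaining equality is the special case $g = s$ of $τ_{gL}(g^{-1}) = τ_L^{-1}(g)$, i.e.\ of \eqref{taufg}.

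The step I expect to be the main obstacle is the second one: verifying in families over $S$, and after $z$-adic completion, that the cokernel of multiplication by $s$ contributes precisely the factor $B(ω_{X/S}^{j+1}|_{\div1 s})$, and matching all parity-reversal conventions on the two sides — the $Π^{g-1}$ produced by super Riemann--Roch in \eqref{B(s)j} against the $Π^{m-n}$ and the $j$-parity conventions built into $\Ber_{j/2}$. One must also dispose of the case in which $\div1 s$ carries multiplicities; this is handled exactly as in the construction after \eqref{Bs}, by reading $ω_{X/S}^{j+1}|_{\div1 s}$ as a virtual coherent sheaf $\sum_P n_P\,ω_{X/S}^{j+1}|_P$ and invoking Principle \eqref{principleS} to ignore the conormal-bundle contributions.
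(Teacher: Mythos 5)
Your proposal matches the paper's own proof in all essentials: the paper likewise identifies the Čech complex of $ω_{X/S}^{j}$ for the cover $\{U,V\}$ with the addition complex $0 \to L \oplus H_{j/2}^+ \to H_{j/2} \to 0$ after $z$-adic completion, uses the short exact sequence of complexes induced by multiplication by $s$ (with quotient $L'/sL$, where $L' = π_*ω_{X/S}^{j+1}$) and the resulting quasi-isomorphism onto the complex for $sL$ to identify $B_j(s)$ with $\Ber s = τ_{sL}(s^{-1})$, derives \eqref{Berandlambda} from the canonical isomorphism $\Ber_{(j+1)/2}(sL) = \Ber(L'/sL)^{-1}\otimes\Ber_{(j+1)/2}(L')$, and gets the last equality of \eqref{Bandtau} from \eqref{taufg}. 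The only cosmetic difference is that for $\kappa_{j/2}^*\Ber_{j/2} = \lambda_{j/2}$ the paper cites prior work (Maxwell 2022) while you rederive it from the same Čech identification, which is consistent with the paper's setup.
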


    \begin{proof}[Proof of Lemma]
        See \cite{Maxwell.2022.tsMfaSG} regarding the relation between the Berezian line bundles on the super Sato Grassmannian and supermoduli space under the super Krichever map.

        Let us prove the second isomorphism and the relation between $B_0(s)$ and Schwarz's super tau function.

        For a family $π \co X \to S$ of SRSs with one NS puncture and formal coordinates $(z | ζ)$ at the puncture, the short exact sequence \eqref{SES} tensored with $ω_{X/S}$ induces a short exact sequence of Čech complexes
        \begin{equation*}
            \begin{tikzcd}
                &  0 \arrow{d} & 0 \arrow{d}\\
                0 \arrow{r} &  ω^j_{X/S}(U) \oplus ω^j_{X/S}(V) \arrow{r}\arrow{d}{s}& ω^j_{X/S}(U\cap V) \arrow{d}{s} \arrow{r} & 0\\
                0  \arrow{r} & ω^{j+1}_{X/S} (U) \oplus ω^{j+1}_{X/S} (V) \arrow{r}\arrow{d} & ω^{j+1}_{X/S} (U\cap V) \arrow{r}\arrow{d} & 0\\
                0  \arrow{r} & \left.ω^{j+1}_{X/S}\right|_{\div1 s} (U) \arrow{r}\arrow{d} & 0 \arrow{r}\arrow{d} & 0,\\
                &  0  & 0
            \end{tikzcd}
        \end{equation*}
        provided $V$ is small enough to not intersect $\div1 s$. On the base $S$ of the family, this diagram rewrites, after the $z$-adic completion, as
        \begin{equation*}
            \begin{tikzcd}
                &  0 \arrow{d} & 0 \arrow{d}\\
                0 \arrow{r} &  L \oplus H_{j/2}^+ \arrow{r}\arrow{d}{s}& H_{j/2} \arrow{d}{s} \arrow{r} & 0\\
                0  \arrow{r} &  L' \oplus H_{(j+1)/2}^+ \arrow{r} \arrow{d} & H_{(j+1)/2} \arrow{r} \arrow{d} & 0\\
                0  \arrow{r} & L'/ s L \arrow{r}\arrow{d} & 0 \arrow{r}\arrow{d} & 0,\\
                &  0  & 0
            \end{tikzcd}
        \end{equation*}
        where $L' \coeq ω^{j+1}_{X/S}(U)$,  $s = s(z | ζ) [dz | dζ]$ is regarded as an element of $Γ_{1/2}(S)$. Observe that this short exact sequence of complexes is quasi-isomorphic to
        \begin{equation*}
            \begin{tikzcd}
                &  0 \arrow{d} & 0 \arrow{d}\\
                0 \arrow{r} &  L \oplus H_{j/2}^+ \arrow{r}\arrow{d}{s}& H_{j/2} \arrow{d}{s} \arrow{r} & 0\\
                0  \arrow{r} &  sL \oplus H_{(j+1)/2}^+ \arrow{r} \arrow{d} & H_{(j+1)/2} \arrow{r} \arrow{d} & 0.\\
                &  0  & 0
            \end{tikzcd}
        \end{equation*}
        Therefore, the isomorphism \eqref{B(s)j}, which can now be rewritten as
        \begin{align*}
            B_j(s)\co
            \begin{cases}
                \Ber_{j/2}(L) \otimes \Ber(L'/ s L)^{-1}\to Π^{g-1} \Ber_{(j+1)/2}(L')^{-1} &  \text{for }j \text{ even},\\[0.6em]
            Π^{g-1} \Ber_{j/2}(L)^{-1} \otimes \Ber(L'/ s L)\to  \Ber_{(j+1)/2}(L') & \text{for }j \text{ odd},
            \end{cases}
        \end{align*}
        turns into the action of $s \in Γ_{1/2}(S)$,
        \begin{align*}
            \tau_{sL}(s^{-1}) = \Ber s\co
            \begin{cases}
            	 \Ber_{j/2}(L)\to Π^{g-1} \Ber_{(j+1)/2}(s L)^{-1}  & \text{for } j \text{ even},\\[0.6em]
             Π^{g-1} \Ber_{j/2}(L)^{-1} \to \Ber_{(j+1)/2}(s L) & \text{for } j \text{ odd},
            \end{cases}
        \end{align*}
        on the Berezinian line bundles described in \cref{Gamma-action}. Geometrically, this observation implies \cref{Bandtau}, and the canonical isomorphism
        \begin{align*}
            \Ber_{(j+1)/2}(s L) = \Ber(L'/ s L)^{-1} \otimes \Ber_{(j+1)/2}(L')
        \end{align*}
        implies \eqref{Berandlambda}.

        Lemma is proven.
        \let\qed\relax

    \end{proof}

    To finish the proof of the theorem, plug \Cref{Bandtau} into the formula \eqref{MumfordFormula}:
    \begin{multline*}
        \mu(s) = \frac{B_0 (s)}{B_1 (s)} \cdot \frac{B_0 (s)}{B_2 (s)} = \frac{\kappa^*_{0/2} \tau_L^{-1} (s)}{\kappa^*_{1/2}\tau_{sL}^{-1} (s)} \cdot \frac{\kappa^*_{0/2}\tau_L^{-1} (s)}{\kappa^*_{2/2}\tau_{s^2 L}^{-1} (s)}
        =
        \frac{\kappa^*_{1/2}\tau_{sL} (s)}{\kappa^*_{0/2} \tau_L (s)} \cdot \frac{\kappa^*_{2/2}\tau_{s^2 L} (s)}{\kappa^*_{0/2}\tau_L (s)}
        \\
        = \frac{\kappa^*_{0/2}\tau_{L} (s)}{\kappa^*_{0/2}\tau_L (s)} \cdot \frac{\kappa^*_{1/2}\tau_{sL} (s)}{\kappa^*_{0/2} \tau_L (s)} \cdot \frac{\kappa^*_{2/2}\tau_{s^2 L} (s)}{\kappa^*_{0/2}\tau_L (s)}  = \frac{\kappa^*_{0/2}\tau_{L} (s^3)}{\kappa^*_{0/2}\tau^3_L (s)} = \kappa^*_{0/2} M (s,L)
    \end{multline*}
    by \eqref{SchwarzMumfordform}.
\end{proof}

\section*{Data Availability}

We do not analyze or generate any datasets because our work proceeds within an abstract and formulaic approach.

\printbibliography

\end{document}